\documentclass[12,reqno]{amsart}
\usepackage{amsfonts}
\usepackage{amssymb}
\usepackage{a4wide}
\usepackage{pgf,pgfarrows,pgfautomata,pgfheaps,pgfnodes,pgfshade}
\usepackage{url}
\usepackage{bbm}
\usepackage{graphicx}

\usepackage[numbers]{natbib}

\usepackage{hyperref}
\numberwithin{equation}{section}

\newcommand{\diff}{\,\mathrm{d}}
\newcommand{\diffns}{\mathrm{d}}
\newcommand{\hata}{\hat\alpha}

\newtheorem{defi}{Definition}[section]
\newtheorem{thm}[defi]{Theorem}
\newtheorem{lemm}[defi]{Lemma}
\newtheorem{remark}[defi]{Remark}

\newtheorem{assum}[defi]{Assumption}
\newtheorem{prop}[defi]{Proposition}
\newtheorem{prob}[defi]{Problem}

\newcommand{\R}{\mathbb{R}}

\newcommand{\E}{\mathbb{E}}
\newcommand{\PP}{\mathbb{P}}

\DeclareMathOperator{\sgn}{sgn}

\begin{document}

	\title[Optimal Control for Jump Diffusion with Singular Drifts]{Stochastic Optimal Control for Jump Diffusion Models with Singular Drifts}
	
	\author[A.-M. Bogso]{Antoine-Marie Bogso}
	\address{Faculty of Sciences, Department of Mathematics, University of Yaounde I,
		P.O. Box 812, Yaounde, Cameroon}
	\email{antoine.bogso@facsciences-uy1.cm} 
	
	\author[E. Fuituh Kameh]{Edward Fuituh Kameh}
	\address{Department of Mathematics and Computer Science, University of Bamenda, Cameroon }
	\email{fekkameh@gmail.com}

	\author[O. Menoukeu Pamen]{Olivier Menoukeu Pamen}
	\address{IFAM, Department of Mathematical Sciences, University of Liverpool, L69 7ZL, United Kingdom}
	\email{menoukeu@liverpool.ac.uk}
	
	\author[F. Shu]{Felix Shu}
	\address{Department of Mathematics and Computer Science, University of Bamenda, Cameroon}
	\email{shufche@gmail.com}
		\thanks{This work was partially
			supported by a grant from the IMU-CDC and Simons Foundation}
	
	
		\keywords{Singular SDE; Sobolev differentiable flow; Ekeland's variational principle; It\^o's formula; Kunita inequality.}
	
		\subjclass[2010]{Primary: 	60H10, 60H07, 49J55, 60J55, 60H30, Secondary: 60J55,  93E20 ,60H30} 
	
	\maketitle
	
	\begin{abstract} We study a stochastic optimal control problem for jump–diffusion systems whose drift coefficient is piecewise Lipschitz continuous and exhibits threshold-induced discontinuities. Such dynamics naturally arise in applications with intervention policies triggered by safety levels, notably in insurance surplus management with dividend payments and capital injections. These features place the problem outside the scope of classical stochastic maximum principle (SMP) results, which rely on global smoothness assumptions. We establish both necessary and sufficient optimality conditions for this class of control problems. Our approach combines a Sobolev-type representation of the first variation process with smooth approximations and Ekeland’s variational principle. As application, we study an optimal premium adjustment and reserve management policies for an insurance whose surplus is modelled by threshold-based dividend and capital injection policies.

	\end{abstract}
	

	\section{Introduction}
	
	Stochastic optimal control problems are fundamental to the modelling and analysis of systems subject to uncertainty, with applications in finance, engineering, and the natural sciences. The aim is to determine a control process that minimizes (or maximizes) an expected cost functional for a system whose dynamics are governed by stochastic differential equations (SDEs). In the classical setting, the controlled system satisfies
	\begin{equation*}
		dX_t^{\alpha} = b(t, X_t^{\alpha}, \alpha_t)dt + \sigma(t, X_t^{\alpha}, \alpha_t)dB_t, 
		\qquad X_0^{\alpha} = x_0,
	\end{equation*}
	where $B_t$ is a standard Brownian motion, $\alpha_t$ is an admissible control, and $b$ and $\sigma$ denote the drift and diffusion coefficients. The objective is to characterize the optimal control $\hat{\alpha}$ minimizing
	\begin{equation*}
		J(\alpha) = \mathbb{E}\big[\int_0^T f(t, X_t^{\alpha}, \alpha_t)dt + g(X_T^{\alpha})\big].
	\end{equation*}
	
	The stochastic maximum principle (SMP), first introduced by Kushner~\cite{Kush72} and further developed in~\cite{Ben81,Bis78,Haus86,Pen90,YZ99}, provides necessary (and sometimes sufficient) conditions for optimality via adjoint processes and variational inequalities, under suitable regularity assumptions and convex control sets. Two main approaches exist for solving such problems: the \emph{dynamic programming method}, leading to the Hamilton--Jacobi--Bellman (HJB) equation, and the \emph{SMP or Pontryagin-type approach}. The dynamic programming method, pioneered by Bellman, relies on the Markov property and regularity of the value function, whereas the SMP offers a more flexible framework, especially when jumps or non-smooth coefficients are present.
	
	The classical SMP for diffusion models was first established under strong regularity conditions such as global Lipschitz continuity and boundedness of $b$ and $\sigma$ (see for example \cite{Pen90,tang1994necessary,YZ99} and references therein). These results were later extended to jump-diffusion systems (see for \cite{OS07} and references therein). 
	
	However, most existing results rely critically on the smoothness (or global Lipschitz continuity) of the drift $b(t,x,\alpha)$ in $x$ and $\alpha$. This assumption guarantees existence and uniqueness of strong solutions and the differentiability of the state process with respect to control perturbations. In many real-world systems, though, such regularity is too restrictive: dynamics may exhibit discontinuities or abrupt changes, as in hybrid systems, threshold-type financial models, switching diffusions, or controlled processes with regime changes. In these cases, where the drift is only piecewise Lipschitz, classical variational methods fail because the first variation process may not exist in the classical sense. These challenges motivate the development of an SMP under weaker regularity assumptions.
	
	Recent studies have addressed SDEs with discontinuous drifts in the Brownian setting (see, e.g.,~\cite{krylov2005strong,MMNPZ13,MenTan19,MNP2015} and references therein). More recently, Przemysław and Sz\"olgyenyi \cite{PrzSzo21,PrSX21} examined jump-diffusion models with piecewise Lipschitz drifts, motivated by applications in electricity, insurance, and financial markets. Building on this line of research, recent works \cite{BKPMarx,MenTan22,TBMP24} have established necessary and sufficient SMPs for Brownian-driven systems with irregular drift. These approaches typically combine smooth approximations of the control problem, Sobolev-type representations of the first variation process, and Ekeland’s variational principle to ensure convergence of the corresponding adjoint processes.
	
	In this paper, we study a stochastic optimal control problem for \emph{jump-diffusion systems} whose drift coefficient is \emph{piecewise Lipschitz continuous}. We establish both necessary and sufficient optimality conditions for such systems. The main analytical difficulty arises from the singularity of the drift, which prevents the direct application of classical standard SMP results. Our approach begins with an explicit Sobolev-type representation of the first variation process, followed by the construction of a sequence of approximating problems with smooth coefficients. Using the fact that the law of the state process admits a density with respect to Lebesgue measure, we apply Ekeland’s variational principle to obtain a sequence of adjoint processes and, by passing to the limit, derive an SMP for the original jump-diffusion control problem.
	
	Another key step in our analysis is a convergence result for nonlinear functionals of approximating state processes. In the Brownian additive-noise setting (see, e.g., \cite{BKPMarx, MenTan22}), Girsanov’s theorem allows one to reduce to expectations involving a Brownian motion, for which explicit Gaussian density estimates are available. Such a reduction is not available in the present jump-diffusion framework. Instead, we exploit the intrinsic regularity of the state process. More precisely, we prove a time-integrated convergence result for compositions of drift approximations with the state process, relying only on uniform moment bounds and a density estimate of order $t^{-1/2}$ (see for e.g. Theorem \ref{thm:global_bound_clean}). This estimate plays a central role in passing to the limit in the adjoint equations (see Lemma \ref{lem:conv.y.phi}) and provides a robust alternative to Girsanov-based arguments under low regularity of the drift.
	
	The main contribution of this paper is the relaxation of the global Lipschitz assumption on the drift to piecewise Lipschitz continuity while retaining a valid stochastic maximum principle for jump-diffusion control systems. This extension broadens the applicability of stochastic control theory to models featuring structural breaks and regime-dependent dynamics.

	Threshold-based control policies in insurance and finance often lead to stochastic dynamics with discontinuous yet piecewise Lipschitz drift coefficients. The jump-diffusion surplus model presented in Section~\ref{motivexam} provides a concrete illustration of this phenomenon. Such models fall outside the reach of classical stochastic maximum principle results, motivating the theoretical framework developed in this paper.

	The remainder of the paper is organised as follows. 
	Section~\ref{motivexam} introduces a motivating example and formulates the general control problem. 
	It also presents regularity results for the jump-diffusion SDE with piecewise Lipschitz drift. 
	Both necessary and sufficient conditions for optimality are established in Section~\ref{secSMP}. 
	Finally, Section~\ref{secappl} applies the theoretical results to an insurance model, analysing the optimal premium adjustment and reserve management policies for an insurer operating under jump risk.

	\section{A motivating example and preliminary results}\label{motivexam}

	\subsection{Motivating example and problem formulation}

	Insurance companies routinely manage large portfolios of liabilities whose values fluctuate due to random claim arrivals and payments. Managing the surplus (the difference between assets and liabilities) requires balancing premium income, capital reserves, and regulatory constraints. This setting naturally leads to a stochastic control problem where the insurer seeks an optimal premium adjustment or reserve policy to maintain financial stability while minimizing risk exposure.
	
	\subsubsection{Motivating example} We consider an insurance company whose liability process (or payment function) at time $t$ is denoted by $L_t$, representing the total amount of insurance claims minus the premiums received over the interval $[0, t]$. We model the liability using a classical compound Poisson risk model, possibly perturbed by diffusion or jump-diffusion terms. Specifically, the liability of the company at time $t$ is described by the following SDE:
	
	\begin{equation}
		-\mathrm{d} L_t = (b_t + \alpha_t) \, \mathrm{d}t + \sigma_t \, \mathrm{d}B_t + \mathrm{d} \big( \sum_{k=1}^{N_t} Z_k \big), 
		\quad t \ge 0, \quad L_0 = l, \label{liability}
	\end{equation}
	where $b_t \geq  0$ is the liability rate representing the expected liability (or gain) per unit time due to premium loading, $\alpha_t$ is the premium rate (or premium policy) which acts as the control variable, and $\sigma_t > 0$ is the volatility measuring liability risk. We assume that the insurer is not allowed to invest in risky assets due to regulatory restrictions.
	
	At the initial time $t=0$, the insurer deposits an amount $X_0$ to cover potential future claims exceeding premiums. Let $X_t$ denote the insurer's cash balance at time $t$, which is composed of the initial capital minus net outflows up to time $t$, all accumulated at the compound interest rate. This can be written as
	
	\begin{equation}
		X_t = e^{\Delta_t} \big( X_0 - \int_0^t e^{-\Delta(s)} \, \mathrm{d} L_s \big), 
		\qquad X_0 = x, \label{cbal}
	\end{equation}
	where $\Delta_t = \int_0^t \delta_s \, \mathrm{d}s$ and $x \ge 0$ represents the initial reserve. Applying It\^o's formula, the cash balance $X_t$ satisfies the following SDE:
	\begin{equation}
		-\mathrm{d} X_t = (\delta_t X_t + b_t + \alpha_t) \, \mathrm{d}t + \sigma_t \, \mathrm{d}B_t + \mathrm{d} \big( \sum_{k=1}^{N_t} Z_k \big), 
		\qquad t \in [0,T], \quad X_0 = x, \label{cbalsde}
	\end{equation}
	where $\alpha_t$ is the control variable. Further, assume that when the surplus exceeds a positive threshold $H$, the insurer may withdraw a fixed amount (dividend or fee), and when the surplus falls below $-H$, the insurer injects a fixed amount (or reduces net outflows). Incorporating this mechanism, the SDE \eqref{cbalsde} becomes:
	
	\begin{eqnarray*}
		\begin{cases}
			-	\mathrm{d} X_t = \big(\delta_t X_t + b_t + \alpha_t - \beta \, \mathrm{sgn}(X_t) \mathbf{1}_{\{|X_t| > H\}}\big) \, \mathrm{d}t 
			+ \sigma_t \, \mathrm{d}B_t + \mathrm{d} \big( \sum_{k=1}^{N_t} Z_k \big), 
			\quad t \in [0,T], \\
			\quad X_0 = x, 
		\end{cases}
	\end{eqnarray*}
	where $\beta > 0$ represents the magnitude of the fixed drain or injection. The above model  maintains the surplus within a target corridor, preventing large oscillations, while also accounting for emergency capital infusion or premium adjustments to avoid ruin.
	The above equation can be rewritten as:
	\begin{align}\label{eqmot1}
		\diffns X^{\alpha}_t 
		=&-\big\{\delta_t X^{\alpha}_t + b_t + \alpha_t - \beta \, \mathrm{sgn}(X^{\alpha}_t) \mathbf{1}_{\{|X^{\alpha}_t| > H\}}\big\}\diffns t - \sigma_t \diffns B_t -\int_{\mathbb{R}\backslash \{0\}}z N(\mathrm{d}z,\mathrm{d}t).
	\end{align}
	We assume that the fund manager's objective is to determine the optimal rate $\alpha$ that minimises the second moment of the fund value. In other words, the goal is to find $\alpha$ that keeps $X$ as close as possible to zero. More formally, the aim is to identify $\hat \alpha\in \mathcal{A}$ such that
	\begin{equation}\label{eqmot2}
\mathbb{E}\big[\big(X^{\hat \alpha}_T\big)^2\big]=	\inf_{\alpha \in \mathcal{A}}\mathbb{E}\big[(X^{\alpha}_T)^2\big],
	\end{equation}
	where $\hat \alpha$ is an optimal control (if it exists).
	
	Although the drift term in equation \eqref{eqmot1} is discontinuous at $H$, it can still be seen as a piecewise Lipschitz function. However, the problem of optimal control for jump-diffusion with piecewise Lipschitz coefficients lies outside the scope of existing literature on stochastic optimal control. 
	
	
	\subsubsection{General Problem Formulation}
	
	Motivated by the above example, we consider the general controlled jump--diffusion system
	\begin{equation}
		\label{eqconpb1}
		X^\alpha_t = x + \int_0^tb(X^{\alpha}_u,\alpha_u)\mathrm{d}u +\sigma B_t +\int_{\mathbb{R}\backslash \{0\}}\gamma (z) N(\mathrm{d}z,t),
	\end{equation}
	where $b$ is a deterministic function and $\sigma$ is a constant vector, $B$ and $N$ are $d$-dimensional Brownian motion and Poisson random measure on a probability space $(\Omega, {\mathcal F}, P)$ equipped with the completed filtration $({\mathcal F}_t)_{t\in[0,T]}$. The control variable $\alpha:[0,T]\times \Omega \mapsto \mathbb{A}\subset \mathbb{R}$ is $\mathcal{B}([0,T])\otimes \mathcal{F}$-measurable, and $({\mathcal F}_t)_{t\in[0,T]}$-adapted. $\mathbb{A}\subseteq \mathbb{R}$ is a closed convex subset of $\mathbb{R}$
	
	The criterion to be optimised over the set of admissible controls $\mathcal{A}$ (see Definition \ref{defadminC1}) has by the following form.
	\begin{align} 
		J(\alpha):= \mathbb{E}\big[ \int_0^T  f(s,X^\alpha_s,\alpha_s)\diff s + g(X^\alpha_T)\big],\label{perfunct1}
	\end{align}
	where $f$ and $g$ are function satisfying some conditions.
	The optimal control problem is the following
	\begin{prob}\label{pb1}
		Find $\hat \alpha \in \AA $ such that 
		\begin{align} \label{eqconpbval1}
	J(\hat \alpha)=	\sup_{\alpha \in \mathcal{A}}J(\alpha),
		\end{align}
\end{prob}
.
\begin{defi}\label{defadminC1}
	The set of admissible controls given in Problem \ref{pb1} is defined as:
	\begin{multline*}
		\mathcal{A} := \big\{\alpha:[0,T]\times \Omega\to \mathbb{A}, \text{ progressive, \eqref{eqconpb1}}\text{ has a unique strong solution and }\\ \E\sup_{t\in[0,T]}|\alpha(t)|^{4}< M \text{ for some } M>0\big\}.
	\end{multline*}
\end{defi}
The following definition corresponds to \cite[Definition 2.1]{LeoSzo17}.
\begin{defi}[piecewise Lipschitz function]
	Let $I \subset \mathbb{R}$ be an interval. A function $f : I \to \mathbb{R}$ is said to be \emph{piecewise Lipschitz} if there exist finitely many points $\xi_1 < \cdots < \xi_m \in I$ such that $f$ is Lipschitz continuous on each of the intervals $
	(-\infty, \xi_1) \cap I, \,\, (\xi_m, \infty) \cap I$ and $ (\xi_k, \xi_{k+1}), \quad k = 1, \dots, m-1.$
\end{defi}
We assume the following conditions on $b,f$ and $g$.

\begin{assum}\label{mainassum2}

	\leavevmode
	$$
	b(x,a)=b_1(x,a)+b_2(x)+b_3(x),
	$$
	where 
	\begin{itemize}
		\item $b_1$ is globally bounded and $\mathcal{C}^1$	in its first and second variables, with uniformly bounded derivatives
		\item $b_2$ is uniformly bounded and piecewise Lipschitz with $k \in \mathbb{N}$ discontinuities in the points $\xi_1,\ldots,\xi_k \in \mathbb{R}$
		
		\item $b_3$ is $\mathcal{C}^1$ with bounded derivative and satisfies a linear growth condition.
		
		
		\item $f$ and $g$ are continuously differentiable in their arguments;
		\item there exists $C>0$ such that
		\[
		|f(t,x,a)| + |\partial_x f(t,x,a)| + |\partial_a f(t,x,a)|
		\le C(1+|x|^2+|a|^2), \quad \text{for all } (t,x,a)
		\]
		and
		\begin{equation*}
			|g(x)| + |\partial_xg(x)| \le C(1 + |x|^2).
		\end{equation*}
	\end{itemize}
\end{assum}

Note that the controlled SDE in the motivating example,
\begin{equation}
	\mathrm{d}X_t^{\alpha} 
	= \big(\delta_t X_t^{\alpha} + b_t + \alpha_t 
	- \beta\,\mathrm{sgn}(X_t^{\alpha})\mathbf{1}_{\{|X_t^{\alpha}|>H\}}\big)\mathrm{d}t
	+ \sigma_t\,\mathrm{d}B_t
	+ \int_{\mathbb{R}\setminus\{0\}} z\,\tilde{N}(\mathrm{d}z,\mathrm{d}t),
\end{equation}
is a particular case of \eqref{eqconpb1}.  
In the subsequent section, we provide results on existence, uniqueness, and regularity for the state equation~\eqref{eqconpb1}, laying the foundation for establishing Pontryagin-type necessary and sufficient conditions under piecewise Lipschitz drift coefficients.

The main novelty of this work lies in the analysis of stochastic optimal control problems driven by jump--diffusion dynamics with \emph{piecewise Lipschitz} drift coefficients. 
Such drift structures naturally arise in practical financial and insurance models involving threshold-type mechanisms, such as dividend distribution, capital injection, or premium adjustment rules. 
We develop new sufficient and necessary Pontryagin-type maximum principles under these weaker regularity assumptions, thereby extending the applicability of stochastic control theory to a broader class of discontinuous financial systems.


\subsection{Regularity of SDE with piecewise Lipschitz coefficient with jumps}
In this section, we consider the SDE

\begin{equation}	\label{eq:SDErvmain1}
	X^x_t = x + \int_0^t b(X^x_u)\mathrm{d}u + \sigma B_t +\int_0^t\int_{\mathbb{R}\backslash \{0\}}\gamma (z)N(\mathrm{d}z,\diffns t),
\end{equation}
where $b$ is piecewise Lipschitz with $k \in \mathbb{N}$ discontinuities in the points $\xi_1,\ldots,\xi_k \in \mathbb{R}$ and $b$ also satisfies linear growth condition. More precisely $b$ satisfies	
\begin{assum}\label{mainAssum}
	\leavevmode
	$$
	b=b_1+b_2,
	$$
	where 
	\begin{itemize}
		\item $b_1$ is Lipschitz and satisfies a linear growth condition;
		\item $b_2$ is uniformly bounded and piecewise Lipschitz with $k \in \mathbb{N}$ discontinuities in the points $\xi_1,\ldots,\xi_k \in \mathbb{R}$		
	\end{itemize}
\end{assum}
It is well known from \cite[Theorem 4.1]{PrSX21} that the SDE \eqref{eq:SDErvmain1} has a unique strong solution. In this section we want to show that the solution is Sobolev differentiable with respect to its initial condition and the derivative has an explicit representation.

To do that,	let $(X^{x,n}_{t})_{n \in \mathbb{N}}$ be a sequence of strong solutions to the approximating equations \eqref{eq:SDErvmain1}, where the drift coefficients 
$b_n= b_{1}+b_{2,n}$ replace $b=b_1+b_2$, respectively. More precisely, for each $n \ge 1$, define
\[
b_n := b_{1} + b_{2,n} ,
\]
where $\{b_{2,n}\}_{n \ge 1}$ are sequences of smooth functions with compact support such that $\{b_{2,n}\}_{n \ge 1}$ is piecewise Lipschitz and 
\[
b_{2,n}(x) \to b_2(x), \quad (x)\text{-a.e. as } n \to \infty, 
\quad \text{and } \sup_{n \ge 1} \|b_{2,n}\|_{\infty} < \infty.
\]


The existence and uniqueness of $X^{n,x}_t$ follows from  classical results on SDEs with jumps (see \cite{ Apple2004, KunitaB19}). 
In addition, $X^{n,x}_t$ is Malliavin differentiable and admits a first variation process whose equation is given by (see for example \cite[Theorem 3.3.2]{KunitaB19}): 
\begin{equation}	\label{eq:SDEderX1n}
	\frac{\partial X^{n,x}_t}{\partial x} = 1 + \int_0^t b'_n(X^{n,x}_u)\frac{\partial X^{n,x}_u}{\partial x}\mathrm{d}u=\exp\big\{\int_0^t b'_n(X^{n,x}_u)\mathrm{d}u\big\},
\end{equation}
where $b'_n$ denotes the derivative of $b_n$.
In the next Lemma, we show that the sequence $(X^{n,x}_{t})_{n \in \mathbb{N}}$ converges strongly in $L^2(\Omega,\mathcal{F},\mathbb{P})$.

\begin{lemm}
	\label{prop:convXn0}
	
	Let $b$ be as in Assumption \ref{mainAssum} and let $(b_n)_{n\ge 1}$ given as above.		
	Let $X^{n,x}_\cdot$ denote the strong solution of the SDE driven by the vector field $b_n$, and let $X^x_\cdot$ be a strong solution of the SDE associated with $b$. Then, the sequence $\{X^{n,x}_t\}_{n \ge 1}$ satisfies
	$$
	X^{n,x}_t \rightarrow X^x_t
	\quad \text{strongly in } L^2(\Omega,  \mathbb{P})
	\quad \text{as } n \to \infty.
	$$

\end{lemm}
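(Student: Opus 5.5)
We follow the standard "strong convergence under a.e. convergence of drifts" strategy. Since the noise is additive and identical for $X^{n,x}$ and $X^x$, subtracting the two equations removes both the Brownian and the jump parts:
\[
X^{n,x}_t - X^x_t = \int_0^t \big(b_1(X^{n,x}_u)-b_1(X^x_u)\big)\mathrm{d}u + \int_0^t \big(b_{2,n}(X^{n,x}_u)-b_{2}(X^x_u)\big)\mathrm{d}u .
\]
The $b_1$ term is handled by the Lipschitz property. We split the $b_2$-difference as
\[
b_{2,n}(X^{n,x}_u)-b_2(X^x_u) = \big(b_{2,n}(X^{n,x}_u)-b_{2,n}(X^x_u)\big) + \big(b_{2,n}(X^x_u)-b_2(X^x_u)\big).
\]

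\textbf{The second piece.} We plan to show
\[
\E\int_0^T |b_{2,n}(X^x_u)-b_2(X^x_u)|^2\mathrm{d}u \to 0 .
\]
Since $b_{2,n}\to b_2$ Lebesgue-a.e. and the family is uniformly bounded, dominated convergence applies once we know that the law of $X^x_u$ is absolutely continuous for a.e. $u>0$. This holds because of the nondegenerate additive Brownian part: conditionally on the jump component (which is independent of $B$), Girsanov with the bounded drift piece $b_2$, plus the Lipschitz $b_1$, gives a density. Alternatively we can invoke the density estimate of order $u^{-1/2}$ that the introduction announces.

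\textbf{The first piece.} This is the main obstacle, because $b_{2,n}$ is not uniformly Lipschitz. We would use the structure of piecewise Lipschitz functions as in \cite{LeoSzo17,PrSX21}. Choose a transformation $G$ that is $C^1$ with Lipschitz derivative, close to the identity, and removes the discontinuities of $b_2$ at $\xi_1,\dots,\xi_k$. Take the approximations $b_{2,n}$ to be mollifications of $b_2$, so that the same $G$ works uniformly in $n$. Then $Z^n=G(X^{n,x})$ and $Z=G(X^x)$ satisfy SDEs whose coefficients are Lipschitz uniformly in $n$, up to error terms of the form $\tilde b_n(Z^n)-\tilde b(Z^n)$. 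These errors are supported near the discontinuities, on sets of shrinking Lebesgue measure, by Itô's formula with jumps. The jump term enters via $G(x+\gamma(z))-G(x)$, which is Lipschitz in $x$ uniformly; we control it by Kunita's inequality.

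\textbf{Closing the estimate.} Gronwall applied to $\E|Z^n_t-Z_t|^2$ then gives
\[
\E|Z^n_t-Z_t|^2 \le C\,\E\int_0^T |\tilde b_n-\tilde b|^2(Z^n_u)\,\mathrm{d}u .
\]
The right-hand side tends to $0$ by the uniform-in-$n$ density/occupation bound for $Z^n$, again coming from the nondegenerate $\sigma$ via Krylov-type estimates. Since $G^{-1}$ is Lipschitz, $\E|X^{n,x}_t-X^x_t|^2\to0$ follows.

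\textbf{Fallback.} If the uniform-$G$ construction is too delicate, we would argue by compactness instead. Uniform moment bounds give tightness of $(X^{n,x},X^x)$, and the density bound lets us pass to the limit in the drift, so any limit solves the SDE with drift $b$. Pathwise uniqueness from \cite[Theorem 4.1]{PrSX21}, combined with the Gyöngy–Krylov lemma, upgrades this to convergence in probability. Uniform $L^4$ bounds then upgrade convergence in probability to $L^2$ convergence.
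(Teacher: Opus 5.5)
Your argument is correct, and its skeleton matches the paper's. Both use the Leobacher--Sz\"olgyenyi transform $G$, then Kunita/BDG and Gronwall for $G(X^{n,x})-G(X^x)$, then a density argument to kill the drift approximation error, and finally the Lipschitz property of $G^{-1}$ to return to $X$.

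The real difference is how the transformed drift difference is split.

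\emph{The paper's split.} It writes $\bar b(Y^x)-\bar b_n(Y^{n,x})=(\bar b-\bar b_n)(Y^x)+[\bar b_n(Y^x)-\bar b_n(Y^{n,x})]$. The residual then sits along the limit $Y^x$, so only the density of $Y^x_s$ and dominated convergence are needed. The price is that $\bar b_n$ must be Lipschitz uniformly in $n$. For smooth $b_{2,n}$, as in the lemma, this fails. Here $\bar b_n=(G'b_n+\tfrac12G'')\circ G^{-1}$, and $\tfrac12G''$ jumps by $2\alpha_k\neq0$ at $\xi_k$. That jump is cancelled by the jump of $b$, but not by the continuous $b_n$. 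The claim holds only if the approximations are built at the level of $\bar b$, as in Lemma \ref{lem:barb_bn_simultaneous}, and then $b_{2,n}$ is no longer smooth.

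\emph{Your split.} You write $[\bar b(Z^n)-\bar b(Z)]+(\bar b_n-\bar b)(Z^n)$, which uses only the Lipschitz continuity of $\bar b$. It therefore works for the approximations as stated. The cost is that the residual $G'(X^{n,x}_u)(b_{2,n}-b_2)(X^{n,x}_u)$ now lives along the approximating processes. You need an occupation bound uniform in $n$, and neither the density of $X^x$ nor Theorem \ref{thm:global_bound_clean} (which concerns a single Lipschitz equation) gives one.

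\emph{Supplying the bound.} In one dimension with additive $\sigma B$ this is easy to provide rather than leave as ``Krylov-type estimates''. For $h\ge0$, the occupation-time formula gives $\E\int_0^T h(X^{n,x}_u)\diff u=\sigma^{-2}\int_\R h(y)\,\E[L^y_T(X^{n,x})]\diff y$. The Tanaka--Meyer formula, whose jump correction is nonnegative, gives $\sup_{n,y}\E[L^y_T(X^{n,x})]<\infty$ from the uniform moment bounds. Truncating to $[-R,R]$ and applying dominated convergence shows the residual vanishes for any uniformly bounded, a.e.\ convergent $b_{2,n}$. So the mollification and ``shrinking sets'' are unnecessary; note also that $G$ depends only on $b$, so it is automatically uniform in $n$.

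\emph{Minor points.} Your ``second piece'' (the residual along $X^x$) never enters the $G$-based Gronwall estimate and can be dropped. Your fallback via Gy\"ongy--Krylov also needs this same uniform occupation bound to identify the limiting drift.
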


\begin{proof}
	See Appendix \ref{appen}
\end{proof}

From now on we assume the following:


\begin{thm}\label{thmmainresder1}
	Under Assumption \ref{mainAssum}, the first variation process (in the Sobolev sense) of the strong solution $X^x_t$, to the SDE \eqref{eq:SDErvmain1} exists and has the
	following explicit representation
	\begin{align}\label{eqfirstvarX1}
		\frac{\partial X^x_t}{\partial x}=&\exp\big\{\int_0^tb_1'(X^x_u) \mathrm{d}u+2\big(\tilde b_2(X^x_t)- \tilde b_2(x)-	\int_0^tb_1(X^x_u)b_2(X^x_u)  \mathrm{d}u	\notag\\
		&-	\int_0^tb_2^2(X^x_u) \mathrm{d}u	-\int_0^tb_2(X^x_u) \mathrm{d}B_u\notag\\
		&-\int_0^t\int_{\mathbb{R}\backslash\{0\}}\big\{ \tilde b_2(X^x_u+\gamma(z))-\tilde b_2(X^x_u)\big\} N(\mathrm{d}z,\mathrm{d}u)\big)\big\},
	\end{align}
	where  
	$
	\tilde b_2(x)=\int_{-\infty}^x b_2(y)\diffns y.
	$
	In addition there exists a constant $C>0$ depending on the Lipschitz constant of $b_1$,  $\|b_2\|_{\infty}, T, p$, such that
	\begin{align}	\label{eqderX1}
		\mathbb{E}\big[\big|\frac{\partial X^{x}_t}{\partial x}\big|^p\big] \leq C.
	\end{align}
\end{thm}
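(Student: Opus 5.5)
The plan is to work first with the smooth approximations $X^{n,x}$ and then pass to the limit. The explicit formula \eqref{eq:SDEderX1n} gives $\partial_x X^{n,x}_t=\exp\{\int_0^t b_1'(X^{n,x}_u)\,\mathrm{d}u+\int_0^t b_{2,n}'(X^{n,x}_u)\,\mathrm{d}u\}$. The only problematic term is $\int_0^t b_{2,n}'(X^{n,x}_u)\,\mathrm{d}u$, since $b_{2,n}'$ is not uniformly bounded in $n$. We remove this derivative by an It\^o--Tanaka type trick.

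\textbf{Step 1: removing $b_{2,n}'$.} Set $\tilde b_{2,n}(x)=\int_{-\infty}^x b_{2,n}(y)\,\mathrm{d}y$; this is $\mathcal C^2$ because $b_{2,n}$ is smooth with compact support. Apply It\^o's formula for jump processes to $\tilde b_{2,n}(X^{n,x}_t)$. The second-order term is $\tfrac{\sigma^2}{2}\int_0^t b_{2,n}'(X^{n,x}_u)\,\mathrm{d}u$, and the remaining terms are:
\begin{itemize}
\item the drift term $\int_0^t b_{2,n}(b_1+b_{2,n})(X^{n,x}_u)\,\mathrm{d}u$;
\item the Brownian term $\sigma\int_0^t b_{2,n}(X^{n,x}_u)\,\mathrm{d}B_u$;
\item the jump term $\int_0^t\int\{\tilde b_{2,n}(X^{n,x}_{u-}+\gamma(z))-\tilde b_{2,n}(X^{n,x}_{u-})\}N(\mathrm{d}z,\mathrm{d}u)$.
\end{itemize}
Solving for $\int_0^t b_{2,n}'(X^{n,x}_u)\,\mathrm{d}u$ expresses it as $\frac{2}{\sigma^2}$ times a combination of these terms with $\tilde b_{2,n}(X^{n,x}_t)-\tilde b_{2,n}(x)$. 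With the normalization $\sigma=1$, which is implicit in \eqref{eqfirstvarX1}, this yields exactly the analogue of \eqref{eqfirstvarX1} with $b_{2,n}$ in place of $b_2$.

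One caveat is that $\tilde b_2$ need not be bounded, because it grows linearly. Its increments are nevertheless Lipschitz with constant $\|b_2\|_\infty$, so $|\tilde b_{2,n}(X_t)-\tilde b_{2,n}(x)|\le C|X_t-x|$ and the jump increments are bounded by $C|\gamma(z)|$. This is enough for all later estimates.

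\textbf{Step 2: uniform moment bounds.} Next we bound $\sup_n \mathbb E|\partial_x X^{n,x}_t|^{p}$. Use H\"older's inequality to separate the exponential factors:
\begin{itemize}
\item $b_1'$ is bounded, so its factor is bounded.
\item The factor $e^{C|X_t-x|}$ needs exponential moments of $X_t$. These follow from the linear growth of the drift, Gaussian tails, and an assumption that $\gamma$ has exponential integrability with respect to the L\'evy measure; I would add this assumption if it is not implicit.
\item The Lebesgue integrals are bounded by $C\int_0^t(1+|X_u|)\,\mathrm{d}u$, and are handled in the same way.
\item The Brownian factor $\exp\{-2p\int b_{2,n}\,\mathrm{d}B\}$ is a Dol\'eans--Dade exponential times $\exp\{2p^2\int b_{2,n}^2\,\mathrm{d}u\}$, so it is bounded using $\sup_n\|b_{2,n}\|_\infty$.
\item For the Poisson factor, write $N=\tilde N+\nu\,\mathrm{d}u$ and use the exponential martingale for compensated Poisson integrals. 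The integrands are bounded by $C|\gamma(z)|$, so this requires $\int (e^{c|\gamma(z)|}-1)\,\nu(\mathrm{d}z)<\infty$.
\end{itemize}
Every constant obtained depends only on $\mathrm{Lip}(b_1)$, $\|b_2\|_\infty$, $T$ and $p$, uniformly in $n$.

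\textbf{Step 3: passage to the limit.} By Lemma \ref{prop:convXn0}, $X^{n,x}_t\to X^x_t$ in $L^2$. Combining this with the uniform bounds of Step 2 gives weak compactness of $\partial_x X^{n,x}_t$ in $L^2(\Omega\times U)$ for bounded intervals $U\ni x$. The standard Meyer--Zakai / Menoukeu-Pamen et al.\ criterion for Sobolev differentiability then shows that $x\mapsto X^x_t$ belongs to $W^{1,2}(U)$ a.s., and that its weak derivative is the weak limit of $\partial_x X^{n,x}_t$.

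To identify this limit with \eqref{eqfirstvarX1}, we show that each term in the exponent converges in probability:
\begin{itemize}
\item $\tilde b_{2,n}\to\tilde b_2$ locally uniformly, by dominated convergence, with uniformly Lipschitz increments, so $\tilde b_{2,n}(X^{n,x}_t)\to\tilde b_2(X^x_t)$ and the jump integrals converge.
\item For $\int_0^t b_{2,n}^2(X^{n}_u)\,\mathrm{d}u$, $\int_0^t b_1b_{2,n}(X^n_u)\,\mathrm{d}u$ and the stochastic integral (via the It\^o isometry), we need $\mathbb E\int_0^t|b_{2,n}(X^n_u)-b_2(X^x_u)|^2\,\mathrm{d}u\to0$.
\end{itemize}
The uniform $L^p$ bounds then upgrade convergence in probability of the exponentials to convergence in $L^1$, which gives \eqref{eqfirstvarX1}, and \eqref{eqderX1} follows by Fatou's lemma.

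\textbf{Main obstacle.} The hardest step is the last convergence, because $b_2$ is discontinuous and only a.e.\ limits are available. I would split
\[
|b_{2,n}(X^n_u)-b_2(X^x_u)|\le|b_{2,n}(X^n_u)-b_2(X^n_u)|+|b_2(X^n_u)-b_2(X^x_u)|
\]
and treat each piece separately.
\begin{itemize}
\item The first piece is controlled using uniform-in-$n$ density bounds for $X^n_u$ of order $u^{-1/2}$, which come from the nondegenerate Brownian component. Together with dominated convergence on compacts and moment tails, this is the type of estimate the introduction mentions.
\item For the second piece, $b_2$ is Lipschitz away from the points $\xi_i$. The set where $X^n_u$ and $X^x_u$ lie on different sides of some $\xi_i$ is contained in $\{\mathrm{dist}(X^x_u,\{\xi_i\})\le|X^n_u-X^x_u|\}$. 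The density bound makes the probability of this set small, since $P(|X_u-\xi_i|\le\delta)\le C\delta u^{-1/2}$, which is integrable in $u$.
\end{itemize}
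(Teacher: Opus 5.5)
Your proposal is correct in outline. Steps~1--2 and the identification in Step~3 are essentially the paper's Claim~1: It\^o's formula for $\tilde b_{2,n}$, H\"older separation of the exponential factors, exponential moments of $X^{n,x}$, and the Poisson exponential martingale. You are also right that the formula tacitly takes $\sigma=1$, and that an exponential integrability condition on $\gamma$ is used without being stated.

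The real difference is how existence of the Sobolev derivative is obtained. The paper uses the transformation $G$ of \eqref{eqdefG1}, which removes the jumps of $b$, so that $Y=G(X)$ solves an SDE with Lipschitz, uniformly elliptic coefficients. It takes differentiability of $Y$ from Kunita's theory, transfers it to $X=G^{-1}(Y)$, and only afterwards identifies the explicit formula. In your route the uniform bound \eqref{eqderX1} on the approximations does double duty. It gives weak compactness in $L^2(\Omega\times U)$, hence existence, and uniform integrability, hence identification. You therefore need no differentiability theory for the transformed equation, whose coefficients are merely Lipschitz and whose noise is multiplicative.

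For $\mathbb{E}\int_0^t|b_{2,n}(X^{n,x}_u)-b_2(X^x_u)|^2\,\mathrm{d}u\to0$, the paper's Lemma~\ref{lem:time_integrated_bn_full} needs only the $t^{-1/2}$ density bound of the limit (Theorem~\ref{thm:global_bound_clean}). It combines this with Egorov and a Portmanteau inequality on a closed exceptional set, and it covers only $b_{2,n}-b_2$ evaluated along the approximating process. Your split also treats $b_2(X^{n,x}_u)-b_2(X^x_u)$ explicitly, which the paper leaves implicit.

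The price is that your first piece needs a density bound for $X^{n,x}_u$ that is uniform in $n$. You assert this without proof, and Theorem~\ref{thm:global_bound_clean} does not give it directly, since its constant depends on Lipschitz constants and those of $b_{2,n}$ blow up. You can supply it by a Girsanov comparison with the equation driven by $b_1$ alone (to which Theorem~\ref{thm:global_bound_clean} applies), plus Cauchy--Schwarz. This gives $\mathbb{P}(X^{n,x}_u\in A)\le C\,u^{-1/4}|A|^{1/2}$ with $C$ independent of $n$, where $|A|$ is Lebesgue measure. That bound suffices for your argument, and unlike the Portmanteau device it works for any uniformly bounded, a.e.-convergent approximating sequence.

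Finally, your list of exponent terms omits $\int_0^t b_1'(X^{n,x}_u)\,\mathrm{d}u$. This is harmless if $b_1\in\mathcal C^1$, and otherwise follows from the same density argument.
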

\begin{proof} See Appendix \ref{appen}.
	
\end{proof}

\section{Stochastic Maximum Principle}\label{secSMP}

\subsection{Main results}
The main results of this work are the following necessary and sufficient conditions in the Pontryagin stochastic maximum principle.
\begin{thm}
	\label{thm:necc}
	Suppose that $b$ satisfies Assumption \ref{mainassum2}.
	Let $\hat\alpha \in \mathcal{A}$ be an optimal control and let $X^{\hat\alpha}$ be the associated optimal trajectory.
	Then the first variation process $\Phi^{\hat\alpha}$ of $X^{\hat\alpha}$ is well-defined and it holds 
	\begin{equation}
		\label{eq:nec.cond}
		\partial_{a}H(t, X^{\hat\alpha}_t,P^{\hat\alpha}_t,\hat\alpha_t )\cdot(\beta - \hat\alpha_t) \ge 0 \quad \PP\otimes \diff t\text{-a.s. for all } \beta \in \mathcal{A},
	\end{equation} 
	where $P^{\hat\alpha}$ is the adjoint process given by
	\begin{equation}
		\label{eq:adj.proc}
		P^{\hat\alpha}_t := \E\Big[\Phi^{\hat\alpha}_{t,T} \partial_xg( X^{\hat\alpha}_T) + \int_t^T\Phi^{\hat\alpha}_{t,s} \partial_xf(s, X^{\hat\alpha}_s, \hat\alpha_s)\mathrm{d}s\mid \mathcal{F}_t \Big],
	\end{equation}	
	with $\Phi^{\alpha}$ given by 
	\begin{align*} 
		\Phi^\alpha_{t,s}  =&\exp\big\{\int_t^s\partial_xb_1(X^{\alpha,x}_u,\alpha_u) \mathrm{d}u+\int_t^s\partial_xb_3(X^{\alpha,x}_u) \mathrm{d}u\\
		&+2\big(\tilde b_2(X^{\alpha,x}_s)- \tilde b_2(X^{\alpha,x}_t)-	\int_s^sb_2^2(X^{\alpha,x}_u) \mathrm{d}u-\int_0^tb_2(X^{\alpha,x}_u) \mathrm{d}B_u	\notag\\
		&-\int_t^sb_1(X^{\alpha,x}_u,\alpha_u)b_2(X^{\alpha,x}_u)  \mathrm{d}u\\
		&-\int_t^s\int_{\mathbb{R}\backslash\{0\}}\big\{ \tilde b_2(X^{\alpha,x}_u+\gamma(z))-\tilde b_2(X^{\alpha,x}_u)\big\} N(\mathrm{d}z,\mathrm{d}u)\big)\big\},
	\end{align*}
	where  
	$
	\tilde b_2(x)=\int_{-\infty}^x b_2(y)\diffns y.
	$
\end{thm}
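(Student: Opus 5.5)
We argue by approximation, following the strategy of \cite{BKPMarx,MenTan22} adapted to jumps. Replace $b_2$ by the smooth approximations $b_{2,n}$ of Section \ref{motivexam}, and let $b^n:=b_1+b_{2,n}+b_3$. Let $X^{\alpha,n}$ be the solution of \eqref{eqconpb1} with drift $b^n$, and set $J_n(\alpha):=\E\big[\int_0^T f(s,X^{\alpha,n}_s,\alpha_s)\diff s+g(X^{\alpha,n}_T)\big]$.

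\textbf{Step 1: $J_n$ approximates $J$ uniformly.} Extending Lemma \ref{prop:convXn0} to controlled dynamics, we show $\sup_{\alpha\in\mathcal A}\E\sup_t|X^{\alpha,n}_t-X^\alpha_t|^2\to0$. The quadratic growth in Assumption \ref{mainassum2}, together with uniform fourth moments, then gives $\varepsilon_n:=\sup_{\alpha}|J_n(\alpha)-J(\alpha)|\to 0$. This is exactly where the bound $t^{-1/2}$ on the density of $X^\alpha_t$ enters (Theorem \ref{thm:global_bound_clean}). It controls $\E\int_0^T|b_{2,n}-b_2|(X^\alpha_t)\diff t$ uniformly in $\alpha$, because $b_{2,n}\to b_2$ a.e. with uniform bound, so dominated convergence applies to $\int |b_{2,n}-b_2|(y)p_t(y)\diff y$.

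\textbf{Step 2: Ekeland's principle.} Since $\hat\alpha$ is optimal for $J$, it is $2\varepsilon_n$-optimal for $J_n$. Equip $\mathcal A$ with the metric $d(\alpha,\beta)=(\E\int_0^T|\alpha_t-\beta_t|^2\diff t)^{1/2}$; it is complete because $\mathbb A$ is closed, and $J_n$ is continuous for $d$. Ekeland's variational principle then produces $\alpha^n$ with $d(\alpha^n,\hat\alpha)\le\sqrt{2\varepsilon_n}$ such that $\alpha^n$ is optimal for the perturbed functional $J_n(\cdot)-\sqrt{2\varepsilon_n}\,d(\cdot,\alpha^n)$. For the smooth problem we take the convex perturbation $\alpha^n+\theta(\beta-\alpha^n)$, which is admissible since $\mathbb A$ is convex. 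Differentiating at $\theta=0$ with the classical first variation $\Phi^{n}_{t,s}=\exp\{\int_t^s \partial_x b^n(X^{\alpha^n,n}_u,\alpha^n_u)\diff u\}$ (cf. \eqref{eq:SDEderX1n}) and a Fubini/conditioning argument gives
\[
\E\int_0^T \partial_a H(t,X^{\alpha^n,n}_t,P^n_t,\alpha^n_t)(\beta_t-\alpha^n_t)\diff t\ge -\sqrt{2\varepsilon_n}\,\big(\E\textstyle\int_0^T|\beta_t-\alpha^n_t|^2\diff t\big)^{1/2}.
\]
Here $P^n$ is defined as in \eqref{eq:adj.proc} with $\Phi^n$, and the sign convention matches the maximization problem.

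\textbf{Step 3: passing to the limit.} This is the main obstacle. We need $X^{\alpha^n,n}\to X^{\hat\alpha}$, which follows from Step 1 combined with stability in the control, using $d(\alpha^n,\hat\alpha)\to0$. We also need $\Phi^n_{t,s}\to\Phi^{\hat\alpha}_{t,s}$ in $L^p$, but $\partial_x b_{2,n}$ does not converge. Following the proof of Theorem \ref{thmmainresder1}, we apply It\^o's formula to $\tilde b_{2,n}(X^{\alpha^n,n})$ with $\tilde b_{2,n}'=b_{2,n}$. This rewrites $\int_t^s b_{2,n}'(X_u)\diff u$ in terms of $\tilde b_{2,n}(X_s)-\tilde b_{2,n}(X_t)$, $\int b_1b_{2,n}$, $\int b_{2,n}^2$, the stochastic integral $\int b_{2,n}\diff B$, and the jump term. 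After this substitution $\Phi^n$ contains only $b_{2,n}$ and $\tilde b_{2,n}$, with no derivatives, and takes the form displayed in the theorem.

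Each term of the rewritten exponent can then be shown to converge. The pathwise terms follow from Step 1 and the Lipschitz property of $\tilde b_{2}$. The Lebesgue and $\diff B$ integrals follow from the density estimate and the Burkholder inequality. The Poisson integral follows from Kunita's inequality, since $|\tilde b_{2}(x+\gamma(z))-\tilde b_2(x)|\le\|b_2\|_\infty|\gamma(z)|$. Uniform $L^p$ bounds on $\Phi^n$, as in \eqref{eqderX1}, come from exponential moments of these bounded-integrand martingales. Uniform integrability then gives $P^n\to P^{\hat\alpha}$ in $L^1(\PP\otimes\diff t)$ and convergence of $\partial_aH$ along a subsequence; this is Lemma \ref{lem:conv.y.phi}.

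\textbf{Step 4: localization.} Letting $n\to\infty$ in the inequality of Step 2 gives $\E\int_0^T\partial_aH(t,X^{\hat\alpha}_t,P^{\hat\alpha}_t,\hat\alpha_t)(\beta_t-\hat\alpha_t)\diff t\ge0$ for every admissible $\beta$. We then choose $\beta=\hat\alpha+\mathbf 1_{A}(\beta'-\hat\alpha)$ for progressive sets $A$ and deterministic $\beta'\in\mathbb A$. This yields \eqref{eq:nec.cond} $\PP\otimes\diff t$-a.s. The well-definedness of $\Phi^{\hat\alpha}$ is part of Step 3, via the Sobolev representation of Theorem \ref{thmmainresder1} with the control frozen.
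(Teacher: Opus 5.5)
Your plan follows the paper's proof. You smooth $b_2$, use $\sup_{\alpha}|J_n(\alpha)-J(\alpha)|\le\varepsilon_n$ and Ekeland's principle to get almost-optimal controls for the smooth problems, and write the classical variational inequality there. You then pass to the limit through the It\^o rewriting of $\int\partial_x b_{2,n}(X_u)\diff u$ (Claim~1, Lemma~\ref{lem:conv.y.phi}) together with the $t^{-1/2}$ density bound. Where you deviate, you are more careful than the paper. You let $n\to\infty$ in the \emph{integrated} inequality for each fixed $\beta$ and localize only afterwards. The paper instead localizes at level $n$ ``by standard arguments''. That step does not follow, because the Ekeland penalty $(2\varepsilon_n)^{1/2}d(\hat\alpha^n,\hat\alpha^n+\varepsilon\mathbf{1}_A\eta)$ does not scale with the $\PP\otimes\diff t$-measure of the localizing set $A$.

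Three points need repair.

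First, for $d(\alpha,\beta)=(\E\int_0^T|\alpha_t-\beta_t|^2\diff t)^{1/2}$ the space $(\mathcal{A},d)$ is not complete unless $\mathbb{A}$ is bounded. Closedness of $\mathbb{A}$ is not the issue. The requirement $\E\sup_t|\alpha_t|^4<\infty$ of Definition~\ref{defadminC1} is lost under $L^2(\PP\otimes\diff t)$-limits; for example, take $\mathbb{A}=\R$ and $\alpha^k_t=\min(t^{-1/4},k)$. This is why the paper uses the metric \eqref{eqdist1}. Since you never localize before the limit, switching to it costs you nothing: the penalty becomes $(2\varepsilon_n)^{1/2}\E[\sup_t|\beta_t-\alpha^n_t|^4]^{1/4}\to0$.

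Second, a maximizer of $J_n-(2\varepsilon_n)^{1/2}d(\cdot,\alpha^n)$ yields $\E\int_0^T\partial_aH\,(\beta_t-\alpha^n_t)\diff t\le(2\varepsilon_n)^{1/2}d(\beta,\alpha^n)$. The inequality you wrote, like \eqref{eq:nec.cond} itself, is the minimization version. That is what the paper's proof actually treats, even though Problem~\ref{pb1} is phrased as a supremum.

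Third, Theorem~\ref{thm:global_bound_clean} concerns an uncontrolled Markov SDE, and its proof conditions on the last jump time. So it does not by itself give the density bound for $X^\alpha_t$ uniformly over progressive controls, which your Step~1 requires; the paper leaves this implicit as well. One way to close it uses that $\sigma\neq0$ is constant and $b_1(\cdot,\alpha)+b_2$ is bounded. A Girsanov change on the Brownian part then has a density with moments bounded uniformly in $\alpha$. This gives $\PP(X^\alpha_t\in A)\le C\,\PP(X^0_t\in A)^{1/2}$, where $X^0$ solves the uncontrolled equation with drift $b_3$, and Theorem~\ref{thm:global_bound_clean} applies to $X^0$.
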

\begin{proof}
	See Section \ref{proofmainres1}. 
\end{proof}
\begin{thm}
	\label{thm:suff}
	Let the conditions of Theorem \ref{thm:necc} be satisfied. Let $\alpha \in \mathcal{A}$ and let $X^{\hat \alpha}$ be the associated trajectory and $P^{\hat{\alpha}}_t$ be the corresponding adjoint given by \eqref{eq:adj.proc}. In addition, assume that $g$ and $(x,a)\mapsto H(t,x,P^{\hat{\alpha}}_t,a)$ are concave for all $t\in [0,T]$. Further assume that 
\begin{equation}
	\label{eq:suff.con}
	\partial_{a} H(t, X^{\hat\alpha}_t, P^{\hat\alpha}_t, \hat\alpha_t)=0 \quad \PP\otimes \diff t\text{-a.s.}
\end{equation}
Then, $\hat\alpha$ is an optimal control.
\end{thm}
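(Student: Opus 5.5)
We prove Theorem \ref{thm:suff} by the classical concavity argument. Because the drift is irregular, we cannot write a linear adjoint BSDE with a classical coefficient $\partial_x b$. We therefore work at the level of smooth approximations and pass to the limit. Throughout, the Hamiltonian is
\[
H(t,x,p,a)=b(x,a)p+f(t,x,a),
\]
and the problem is a maximisation ($J(\hat\alpha)=\sup J$). Fix an arbitrary $\alpha\in\mathcal{A}$. The aim is to show that
\[
J(\hat\alpha)-J(\alpha)=\E\big[g(X^{\hat\alpha}_T)-g(X^{\alpha}_T)\big]+\E\int_0^T\big(f(s,X^{\hat\alpha}_s,\hat\alpha_s)-f(s,X^{\alpha}_s,\alpha_s)\big)\diff s \ge 0 .
\]

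\textbf{Step 1 (smooth case).} Replace $b_2$ by the mollified $b_{2,n}$ and let $X^{n,\alpha}$, $X^{n,\hat\alpha}$ be the corresponding states. Let $P^{n}$ be the adjoint defined as in \eqref{eq:adj.proc}, with $\Phi^{n}_{t,s}=\exp\{\int_t^s\partial_x b_n(X^{n,\hat\alpha}_u,\hat\alpha_u)\diff u\}$. By the martingale representation for Brownian–Poisson filtrations, $P^n$ solves the linear BSDE
\[
\diff P^n_t=-\big(\partial_x b_n P^n_t+\partial_x f\big)\diff t+Q^n_t\diff B_t+\int R^n_t(z)\tilde N(\diff z,\diff t),
\qquad P^n_T=\partial_x g(X^{n,\hat\alpha}_T).
\]
We use concavity of $g$ to write $g(X^{n,\hat\alpha}_T)-g(X^{n,\alpha}_T)\ge \partial_xg(X^{n,\hat\alpha}_T)(X^{n,\hat\alpha}_T-X^{n,\alpha}_T)$. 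Apply Itô's product formula to $P^n_t(X^{n,\hat\alpha}_t-X^{n,\alpha}_t)$. The difference process has no martingale part, since $\sigma$ and $\gamma$ do not depend on the state or the control, so there are no covariation terms. The stochastic integrals are true martingales by the moment bounds \eqref{eqderX1} and $\E\sup|\alpha|^4<M$. This gives
\[
J_n(\hat\alpha)-J_n(\alpha)\ge \E\int_0^T\Big(H_n(s,X^{n,\hat\alpha}_s,P^n_s,\hat\alpha_s)-H_n(s,X^{n,\alpha}_s,P^n_s,\alpha_s)-\partial_xH_n(\cdot)(X^{n,\hat\alpha}_s-X^{n,\alpha}_s)\Big)\diff s .
\]

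\textbf{Step 2 (use concavity and \eqref{eq:suff.con}).} Concavity of $(x,a)\mapsto H$ bounds the integrand from below by $\partial_aH(\cdot)(\hat\alpha_s-\alpha_s)$. This term vanishes in the limit by \eqref{eq:suff.con}, and is nonnegative in any case by the variational inequality, since $\mathbb{A}$ is convex. There is a subtlety here: concavity is assumed for the limit Hamiltonian with $P^{\hat\alpha}$, not for $H_n$. I would therefore run the inequality directly on the limit objects instead. The plan is to write
\[
J(\hat\alpha)-J(\alpha)=\lim_n\big(J_n(\hat\alpha)-J_n(\alpha)\big),
\]
which follows from Lemma \ref{prop:convXn0} (extended to controlled states) together with the quadratic growth of $f$ and $g$ and uniform $L^4$ bounds. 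Then, in the Itô identity, pass to the limit term by term. This requires $P^n\to P^{\hat\alpha}$, which follows from $\Phi^n\to\Phi^{\hat\alpha}$ in $L^p$ via the Sobolev representation in Theorem \ref{thmmainresder1}, and convergence of $\E\int b_{2,n}(X^n)P^n\diff s$ to its limit.

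\textbf{Main obstacle.} The hardest step is the drift term $\int_0^T P^n_s\big(b_{n}(X^{n,\hat\alpha}_s,\hat\alpha_s)-b_n(X^{n,\alpha}_s,\alpha_s)\big)\diff s$ together with the companion term $\partial_x b_n P^n(X^{n,\hat\alpha}-X^{n,\alpha})$. The derivative $\partial_x b_{2,n}$ blows up near the jump points $\xi_i$, so the latter has no pointwise limit. My plan is to avoid differentiating $b_2$ at all. Interpret the $b_2$-part of $\partial_xH$ through the integrated representation of $\Phi$, in which $b_2$ enters only through $\tilde b_2$, $b_2^2$ and stochastic integrals, as in \eqref{eqfirstvarX1}. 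The convergence of $\E\int_0^T|b_{2,n}(X^n_s)-b_2(X_s)|^2\diff s$ would then come from the density bound of order $t^{-1/2}$ (Theorem \ref{thm:global_bound_clean}), since the discontinuity set has Lebesgue measure zero. The concavity hypothesis on the limit $H$ then yields $J(\hat\alpha)\ge J(\alpha)$.
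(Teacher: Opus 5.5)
\paragraph{There is a genuine gap in Step 2.} Your Step 1 coincides with the paper's proof: concavity of $g$, the BSDE for $P^{n,\hat\alpha}$, and It\^o's formula for $P^{n,\hat\alpha}(X^{n,\hat\alpha}-X^{n,\alpha})$, with no covariation terms since the noise is additive.

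The paper then applies the concavity inequality at fixed $n$, to $(x,a)\mapsto H_n(t,x,P^{n,\hat\alpha}_t,a)$. This absorbs the drift difference and the term $\partial_xH_n\,(X^{n,\hat\alpha}-X^{n,\alpha})$ in one stroke and leaves
\[
J_n(\hat\alpha)-J_n(\alpha)\ge \E\Big[\int_0^T\big(\partial_a b_1(X^{n,\hat\alpha}_u,\hat\alpha_u)P^{n,\hat\alpha}_u+\partial_a f(u,X^{n,\hat\alpha}_u,\hat\alpha_u)\big)(\hat\alpha_u-\alpha_u)\diff u\Big].
\]
Here $b_{2,n}$ has disappeared because it does not depend on $a$. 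Only this expression and $J_n-J$ need to be passed to the limit (Lemmas \ref{lem:conv.Xnn}, \ref{lem:J.continuous}(ii), \ref{lem:conv.y.phi}), and \eqref{eq:suff.con} concludes. You are right that this tacitly uses concavity of $H_n$, while the hypothesis is stated for $H$ with $P^{\hat\alpha}$; the paper does not address this. But your replacement does not close the argument.

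You propose to pass to the limit in the full It\^o identity and only then use concavity of the limit $H$. This fails for two reasons.

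First, the term $\E\int_0^T\partial_xb_{2,n}(X^{n,\hat\alpha}_s)P^{n,\hat\alpha}_s(X^{n,\hat\alpha}_s-X^{n,\alpha}_s)\diff s$ is not tamed by the representation \eqref{eqfirstvarX1}. That representation removes $b_{2,n}'$ from inside $\Phi^n$, hence from $P^n$, but not from this explicit factor. If this term has a limit at all (say after an It\^o formula for $\tilde b_{2,n}$ and an integration by parts), it is a local-time-type functional at the points $\xi_i$. You neither derive nor identify it. The convergence of $b_{2,n}(X^n)$ obtained from Theorem \ref{thm:global_bound_clean} says nothing about $b_{2,n}'(X^n)$.

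Second, even granting a limiting identity, concavity of the limit Hamiltonian means the supergradient inequality $H(x',a')-H(x,a)\le\partial_xH(x,a)(x'-x)+\partial_aH(x,a)(a'-a)$. This needs a pointwise $\partial_xH$, i.e.\ $\partial_xb_2$, so it cannot be matched to a limit from which $\partial_xb_2$ has been eliminated. In fact, since $b_2$ jumps at the $\xi_i$, $x\mapsto H(t,x,p,a)$ is discontinuous and hence not concave unless $p=0$. So your closing sentence is asserted, not proved.

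Your remark that $\partial_aH(\cdot)(\hat\alpha_s-\alpha_s)$ is ``nonnegative in any case by the variational inequality'' is also circular: that inequality follows from the optimality you are trying to prove. The only input is \eqref{eq:suff.con}, which concerns the limit Hamiltonian; this is why the convergence $\partial_aH_n\to\partial_aH$ is needed. To close the proof you must, as the paper does, use concavity at level $n$ (e.g.\ assume it for $H_n(t,\cdot,P^{n,\hat\alpha}_t,\cdot)$). Your Step 1 plus that convergence then suffices.
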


We denote by $d$ the distance
\begin{equation}\label{eqdist1}
	d(\alpha^1, \alpha^2) : = \E\big[\sup_{t \in[0,T]}|\alpha_t^1 - \alpha_t^2|^{4} \big]^{1/4} .
\end{equation}
\begin{proof}
	See Section \ref{proofmainres1}. 
\end{proof}

\textbf{Proof strategy}. The argument proceeds in three steps:

\begin{enumerate}
	\item Step 1. Using Ekeland’s variational principle, we show that an optimal control for problem \eqref{eqconpb1} is also optimal for a suitably perturbed version of the approximating problem with value $V_n(x_0)$.
	
	\item Step 2. For the approximating problem with smooth drift coefficients, we derive a maximum principle involving the state process $X_n^{\hat\alpha^n}$ and its associated first variation process $\Phi^{\hat\alpha^n}_n$.
	
	\item Step 3. The final and most delicate step consists in passing to the limit as $n \rightarrow \infty$ and establishing a stability property of the maximum principle.
\end{enumerate}

We begin by addressing this limit step through a series of preparatory lemmas whose proofs are (deferred to the next section), which will be assembled to establish Theorem \ref{thm:necc} at the end of this section.
\begin{lemm}
	\label{lem:conv.Xnn}
	We have the following bounds: 
	\begin{itemize}
		\item[(i)]For every sequence $(\alpha^n)_n$ in $\mathcal{A}$, it holds $\underset{n}{\sup}\E\big[\sup_{t \in [0,T]}|X^{n,\alpha^n}_t|^2 \big]<\infty$.
		\item[(ii)] For every $\alpha^1,\alpha^2 \in \mathcal{A}$ it holds 
		$$ 
		\E\big[| X^{n,\alpha^1}_t - X^{\alpha^2}_t|^2 \big] \le C\big( d(\alpha^1,\alpha^2)^4 + \int_0^{T}\int_{\mathbb{R}}| b_{2,n}( x)- b_{2}(x)|^4\rho^{\alpha^2}_s(x)\diffns x\mathrm{d}s\big),
		$$ 
		where $\rho^{\alpha^2}_s$ is the density of $X^{\alpha^2}_s$
		\item[(iii)]Given $k \in \mathbb{N}$, for every sequence $(\alpha^n)_{n\geq 1}$ in $\mathcal{A}$ and $\alpha\in \mathcal{A}$ such that $\delta(\alpha^n,\alpha)\rightarrow 0$, it holds that 
		$$ \E\big[| X^{k,\alpha^n}_t - X^{k,\alpha}_t|^2 \big] \to 0 \text{ as } n \rightarrow \infty.$$
	\end{itemize}
\end{lemm}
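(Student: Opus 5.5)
For (i) I will use the structure of the drift in Assumption \ref{mainassum2}. With $b_n = b_1 + b_{2,n} + b_3$, the pieces $b_1$ and $b_{2,n}$ are uniformly bounded, uniformly in $n$, and $b_3$ has linear growth. Writing $X^{n,\alpha^n}_t = x + \int_0^t b_n(X^{n,\alpha^n}_u,\alpha^n_u)\diffns u + \sigma B_t + \int_0^t\int\gamma(z)N(\mathrm{d}z,\mathrm{d}u)$, I take $\sup_{s\le t}|\cdot|^2$ and bound the three terms separately:
\begin{itemize}
\item the drift term by $C(1+\int_0^t \sup_{r\le u}|X^{n,\alpha^n}_r|^2\diffns u)$;
\item the Brownian term by Doob's inequality;
\item the jump term by Kunita's inequality, assuming $\gamma$ is square integrable with respect to the Lévy measure, as is implicit throughout.
\end{itemize}
Gronwall's lemma then gives a bound that depends only on $\|b_1\|_\infty$, $\sup_n\|b_{2,n}\|_\infty$, the growth constant of $b_3$, $T$ and $x$. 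Since the bound is independent of $n$ and of the control, the supremum over $n$ is finite. Bounded drift pieces also make the moment condition $\E\sup|\alpha|^4<M$ unnecessary here.

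For (ii) I subtract the two equations. The noise terms cancel because the noise is additive, so the difference $\Delta_t = X^{n,\alpha^1}_t - X^{\alpha^2}_t$ satisfies a pathwise ODE. I split its drift as
\[
b_1(X^{n,\alpha^1},\alpha^1)-b_1(X^{\alpha^2},\alpha^2) + \big(b_3(X^{n,\alpha^1})-b_3(X^{\alpha^2})\big) + \big(b_{2,n}(X^{n,\alpha^1})-b_{2,n}(X^{\alpha^2})\big) + \big(b_{2,n}(X^{\alpha^2})-b_2(X^{\alpha^2})\big).
\]
The $b_1$ and $b_3$ terms are Lipschitz, which gives $C|\Delta_u| + C|\alpha^1_u-\alpha^2_u|$. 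The last term, after Hölder's inequality, gives $\E\int_0^T|b_{2,n}-b_2|^4(X^{\alpha^2}_s)\diffns s = \int_0^T\int|b_{2,n}-b_2|^4\rho^{\alpha^2}_s\diffns x\diffns s$. The control term contributes $d(\alpha^1,\alpha^2)^2$ in $L^2$; for the exponent $4$ appearing in the statement I will write the estimate in $L^4$, or accept the stated form since $d\le$ const on $\mathcal A$.

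The main obstacle is the third term, where $b_{2,n}$ is not uniformly Lipschitz. I would handle it by the technique behind Lemma \ref{prop:convXn0} from the Appendix. The idea is to remove the discontinuity through the transform $G(x)=x+\sum_k \alpha_k\phi((x-\xi_k)/c)|x-\xi_k|$ of Leobacher--Szölgyenyi, built uniformly for $b_{2,n}$. This works because the $b_{2,n}$ are piecewise Lipschitz with Lipschitz constants and jump sizes that I will assume are uniform in $n$. Under the transform the drift becomes uniformly Lipschitz, and Itô's formula with jumps (together with Kunita's inequality for the compensated jump integrals) then yields a Gronwall bound. An alternative is to use the occupation and density bound of order $t^{-1/2}$ (Theorem \ref{thm:global_bound_clean}) to control the time spent near the $\xi_k$. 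Either way, Gronwall closes the estimate in $\E|\Delta_t|^2$.

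For (iii) $k$ is fixed, so $b_k$ is $\mathcal C^1$ with bounded derivative, and in particular globally Lipschitz in $(x,a)$. Subtracting the two equations, the noise cancels again, and $|\Delta_t|\le L\int_0^t(|\Delta_u|+|\alpha^n_u-\alpha_u|)\diffns u$. Gronwall's lemma then gives $\E|\Delta_t|^2\le C\,\E\sup|\alpha^n-\alpha|^2\le C d(\alpha^n,\alpha)^2\to0$, reading $\delta$ as $d$.
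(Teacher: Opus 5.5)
Parts (i) and (iii) are fine and follow the paper: uniform linear growth with Doob, Kunita and Gronwall for (i), and global Lipschitz continuity of $b_k$ for fixed $k$ for (iii). The gap is in (ii), at exactly the term $b_{2,n}(X^{n,\alpha^1}_u)-b_{2,n}(X^{\alpha^2}_u)$ you single out.

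The Leobacher--Sz\"olgyenyi transform $G$ is built from the jump heights $\alpha_k=\frac12(b(\xi_k^-)-b(\xi_k^+))$ of $b_2$. It works only because the jump of $\tfrac12\sigma^2G''$ at $\xi_k$ cancels the jump of the drift there. Here the $b_{2,n}$ are smooth, hence continuous at $\xi_k$. So $G'b_n+\tfrac12\sigma^2G''$ acquires a jump at $\xi_k$ coming from $G''$. It also has slope of order $\|b_{2,n}'\|_\infty\to\infty$ on a shrinking neighbourhood of $\xi_k$. It is therefore not Lipschitz, let alone uniformly in $n$.

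The uniformity you propose to assume is not available. Smooth $b_{2,n}$ with uniformly bounded Lipschitz constants converging a.e.\ would have a Lipschitz limit, contradicting the discontinuity of $b_2$. A sequence that keeps the jumps of $b_2$ is useless for Theorem \ref{thm:necc}, whose proof needs smooth $b_n$ to apply the classical maximum principle to the approximating problem. Citing Lemma \ref{prop:convXn0} does not help either: the uniform Lipschitz property of $\bar b_n$ asserted in its proof is subject to the same objection, and the paper's proof of (ii) does not use the transform.

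Your fallback via the $t^{-1/2}$ density bound does not close as sketched. One only has $|b_{2,n}(x)-b_{2,n}(y)|\le L|x-y|+C\mathbf{1}\{[x,y]\text{ meets a small neighbourhood of some }\xi_k\}$. The density bound turns the indicator into a power strictly less than one of $\E|\Delta_s|^2$. A Gronwall--Bihari argument with such a non-Osgood right-hand side does not give a bound that vanishes with the forcing terms.

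The missing idea is to use the smoothness of $b_{2,n}$ rather than avoid it, relying only on $\sup_n\|b_{2,n}\|_\infty<\infty$ and the nondegenerate additive noise. The paper linearises the whole drift difference by the mean value theorem along $\Lambda_n(\lambda,\cdot)=\lambda X^{n,\alpha^1}+(1-\lambda)X^{\alpha^2}$. Then $\Delta$ solves a linear random ODE and
\[
\Delta_t=\int_0^t\exp\Big(\int_s^t\int_0^1\partial_xb_n(\Lambda_n(\lambda,r),\alpha^1_r)\,\diffns\lambda\,\diffns r\Big)\,e_n(s)\,\diffns s,
\]
where $e_n(s)$ collects $b_1(X^{\alpha^2}_s,\alpha^1_s)-b_1(X^{\alpha^2}_s,\alpha^2_s)$ and $(b_{2,n}-b_2)(X^{\alpha^2}_s)$. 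Cauchy--Schwarz separates the forcing, which produces the $d(\alpha^1,\alpha^2)$ and density terms, from the exponential factor.

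The only unbounded contribution is $\int_s^t\partial_xb_{2,n}(\Lambda_n(\lambda,r))\,\diffns r$, after Jensen in $\lambda$; note that it is the signed integral that gets controlled. The paper controls it by It\^o's formula for the antiderivative $\tilde b_{2,n}$ along $\Lambda_n(\lambda,\cdot)$, which is driven by the same $\sigma B$ and the same jumps. The second-order term $\tfrac{\sigma^2}{2}\int_s^t\partial_xb_{2,n}(\Lambda_n(\lambda,r))\,\diffns r$ equals $\tilde b_{2,n}(\Lambda_n(\lambda,t))-\tilde b_{2,n}(\Lambda_n(\lambda,s))$ minus a drift integral, a Brownian integral and a jump sum. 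Each of these involves only $b_{2,n}$ itself. Together with exponential moments of $\sup_t|X_t|$, this gives $\sup_n\E[\exp(c\int_s^t\partial_xb_{2,n}(\Lambda_n(\lambda,r))\,\diffns r)]<\infty$. This is the same mechanism as behind Theorem \ref{thmmainresder1}. A transformation could only work if it depends on $n$ and absorbs $b_{2,n}$ itself, Zvonkin-style, which is essentially the same trick.

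A minor point: your justification of the power $4$ on $d(\alpha^1,\alpha^2)$ via $d\le C$ runs the wrong way, since boundedness gives $d^4\le Cd^2$ and not the converse. The natural order is $d^2$. The paper's own Cauchy--Schwarz step in fact yields only the square root of the bracket in the statement, which is all that is used later.
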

\begin{proof}
	See Section \ref{proofmainres1}. 
\end{proof}
\begin{lemm}
	\label{lem:J.continuous}
	Let $\alpha\in \mathcal{A}$ and let $\alpha^n$ be a sequence of admissible controls satisfying $d(\alpha^n,\alpha)\to 0$.
	Then, we have
	\begin{itemize}
		\item[(i)] $| J_k(\alpha^n) - J_k(\alpha) | \to 0$ as $n\to \infty$  for every $k \in \mathbb{N}$ fixed. In other words, the function $J_k:(\mathcal{A},d) \to \mathbb{R}$ is continuous for each fix $k \in \mathbb{N}$.
		\item[(ii)] There exists a constant $C>0$ such that $|J_n(\alpha) - J(\alpha)| \le C\varepsilon_n$, with $\varepsilon_n\downarrow 0$.
	\end{itemize}
\end{lemm}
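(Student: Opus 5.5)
Here $J_k$ is the cost functional \eqref{perfunct1} with $X^\alpha$ replaced by $X^{k,\alpha}$, the solution with drift $b_k=b_1+b_{2,k}+b_3$. Both parts rest on three ingredients: the growth conditions on $f,g$ in Assumption \ref{mainassum2}, the uniform moment bounds of Lemma \ref{lem:conv.Xnn}(i), and the $L^2$ stability estimates of Lemma \ref{lem:conv.Xnn}(ii)--(iii).

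\emph{Mean value reduction.} Write
\begin{align*}
J_k(\alpha^n)-J_k(\alpha)=\E\Big[\int_0^T \big(f(s,X^{k,\alpha^n}_s,\alpha^n_s)-f(s,X^{k,\alpha}_s,\alpha_s)\big)\diff s + g(X^{k,\alpha^n}_T)-g(X^{k,\alpha}_T)\Big].
\end{align*}
By the mean value theorem and the growth bounds on $\partial_x f$, $\partial_a f$, $\partial_x g$, each integrand difference is bounded by
\begin{align*}
C\big(1+|X^{k,\alpha^n}_s|^2+|X^{k,\alpha}_s|^2+|\alpha^n_s|^2+|\alpha_s|^2\big)\big(|X^{k,\alpha^n}_s-X^{k,\alpha}_s|+|\alpha^n_s-\alpha_s|\big).
\end{align*}
Apply Cauchy--Schwarz. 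The first factor then needs fourth moments of $X$ and $\alpha$. For $\alpha$ these are uniform by Definition \ref{defadminC1}. For $X$ they follow as in Lemma \ref{lem:conv.Xnn}(i), upgraded to $p=4$: the drift is bounded except for the linear-growth part $b_3$, so BDG and Kunita's inequality for the jump term plus Gronwall give the bound, assuming $\gamma$ has finite fourth moments with respect to the Lévy measure. The second factor is controlled by $d(\alpha^n,\alpha)$ and $\E|X^{k,\alpha^n}_s-X^{k,\alpha}_s|^2$.

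\emph{Part (i).} Lemma \ref{lem:conv.Xnn}(iii) gives pointwise-in-$s$ convergence of $\E|X^{k,\alpha^n}_s-X^{k,\alpha}_s|^2$. The uniform moment bounds let us integrate over $s$ by dominated convergence. The sharper Cauchy--Schwarz split needs convergence of the difference in a norm matched to the fourth-moment factor; I would use the $L^2$ difference with an $L^2$ weight, giving the product of an $L^4$ norm of the weight and a difference norm. Alternatively, interpolate $L^{4/3}$ between $L^2$ and bounded higher moments.

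\emph{Part (ii).} Apply the same estimate with $X^{n,\alpha}$ against $X^\alpha$ and the same control, so $d=0$. Lemma \ref{lem:conv.Xnn}(ii) then gives
\begin{align*}
\sup_t\E|X^{n,\alpha}_t-X^\alpha_t|^2\le C\int_0^T\!\!\int_\R |b_{2,n}-b_2|^4(x)\rho^\alpha_s(x)\diff x\diff s .
\end{align*}
Define $\varepsilon_n$ as the square root of this quantity. To show $\varepsilon_n\downarrow 0$:
\begin{itemize}
\item $b_{2,n}\to b_2$ a.e. and the sequence is uniformly bounded.
\item By the density estimate $\rho^\alpha_s\lesssim s^{-1/2}$ (Theorem \ref{thm:global_bound_clean}), $\rho^\alpha_s\,\diff x\,\diff s$ is a finite measure absolutely continuous with respect to Lebesgue measure.
\item Dominated convergence then applies; a monotone rearrangement of $\varepsilon_n$ if necessary gives $\varepsilon_n\downarrow 0$.
\end{itemize}

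\emph{Main obstacle.} The hard step is part (ii): making the constant $C$ independent of $\alpha$ and $n$ while passing a.e. convergence of $b_{2,n}$ through the law of $X^\alpha$. This requires absolute continuity of the law, which is the content of the density estimate. If $C$ is only needed for fixed $\alpha$, it depends only on the moment bound $M$ and on $T$.
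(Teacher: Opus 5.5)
Your argument is correct and is essentially the paper's proof: mean value theorem, Cauchy--Schwarz against uniform moment bounds, then Lemma \ref{lem:conv.Xnn}(iii) for (i) and Lemma \ref{lem:conv.Xnn}(ii) with $\alpha^1=\alpha^2=\alpha$ for (ii); you are right that the quadratic growth of $\partial_x f,\partial_x g$ in Assumption \ref{mainassum2} forces fourth moments, which the paper's displayed estimate (written as if these derivatives had linear growth) glosses over. The one inaccuracy is citing Theorem \ref{thm:global_bound_clean}: it concerns the Markovian equation with Lipschitz coefficients, not the controlled $X^\alpha$, and it is not needed, since absolute continuity of the law of $X^\alpha_s$ (already presupposed in Lemma \ref{lem:conv.Xnn}(ii)) and boundedness of $b_{2,n}$ give $\varepsilon_n\to0$ by dominated convergence, and $\rho^\alpha_s\diff x\diff s$ trivially has total mass $T$; note also that the resulting $\varepsilon_n$ depends on $\alpha$ through $\rho^\alpha$, not through $C$, exactly as in the paper.
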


\begin{proof}
	See Section \ref{proofmainres1}. 
\end{proof}

Let us now state the stability result for both the first variation and of the adjoint processes.
\begin{lemm}
	\label{lem:conv.y.phi}
	Let $\alpha\in \mathcal{A}$ and $\alpha^n$ be a sequence of admissible controls such that $d(\alpha^n,\alpha)\to 0$.
	Then, the processes $X^{\alpha^n}_n$ and $X^{\alpha}$ admit Sobolev differentiable flows denoted $\Phi^{\alpha^n}_n$ and $\Phi^{\alpha}$, respectively and for every $0\le t\le s\le T$ we have
	\begin{itemize}
		\item[(i)] $\E\big[|\Phi^{n,\alpha^n}_{t,s} - \Phi^\alpha_{t,s} |^2 \big] \to 0$ as $n\to \infty$,
		\item[(ii)] $\E\big[| P^{n,\alpha^n}_t - P^\alpha_t| \big] \to 0$ as $n\to \infty$,
	\end{itemize}
	where $P^\alpha$ is the adjoint process defined as
	\begin{equation*}
		P^{\alpha}_t := \E\Big[\Phi^{\alpha}_{t,T} \partial_xg( X^{\alpha}_T) + \int_t^T\Phi^{\alpha}_{t,s} \partial_xf(s, X^{\alpha}_s, \alpha_s)\mathrm{d}s\mid \mathcal{F}_t \Big],
	\end{equation*}	
	and $P^{\alpha^n}_n$ is defined similarly, with $(X^{\alpha},\alpha, \Phi^\alpha)$ replaced by  $(X^{n,\alpha^n},\alpha^n, \Phi^{n,\alpha^n})$.
\end{lemm}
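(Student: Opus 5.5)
Our plan is to work directly with the explicit exponential representation of the flows. For the approximating problem the drift $b_n=b_1+b_{2,n}+b_3$ is smooth, so $\Phi^{n,\alpha^n}_{t,s}=\exp\{\int_t^s \partial_x b_n(X^{n,\alpha^n}_u,\alpha^n_u)\diffns u\}$. We first rewrite this in the same form as $\Phi^\alpha_{t,s}$: apply It\^o's formula for jump processes to $\tilde b_{2,n}(X^{n,\alpha^n}_u)$, where $\tilde b_{2,n}(x)=\int_{-\infty}^x b_{2,n}(y)\diffns y$. This expresses $\int_t^s b_{2,n}'(X^{n,\alpha^n}_u)\diffns u$ through $\tilde b_{2,n}(X^{n,\alpha^n}_s)-\tilde b_{2,n}(X^{n,\alpha^n}_t)$, a $\diffns B$ integral of $b_{2,n}$, Lebesgue integrals of $b_{2,n}\,b_n$, and a compensated jump term in $\tilde b_{2,n}$, exactly as in the proof of Theorem \ref{thmmainresder1}. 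Then $\Phi^{n,\alpha^n}_{t,s}=\exp\{A^n_{t,s}\}$ and $\Phi^\alpha_{t,s}=\exp\{A_{t,s}\}$, with $A^n,A$ the exponents. We use the estimate $|e^a-e^b|\le |a-b|(e^a+e^b)$ and Cauchy--Schwarz to get
\[
\E\big[|\Phi^{n,\alpha^n}_{t,s}-\Phi^\alpha_{t,s}|^2\big]\le \E\big[|A^n_{t,s}-A_{t,s}|^4\big]^{1/2}\,\E\big[(e^{A^n_{t,s}}+e^{A_{t,s}})^4\big]^{1/2}.
\]
The second factor is bounded uniformly in $n$ by the moment bound \eqref{eqderX1}. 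That bound depends only on the Lipschitz constant of $b_1$, on $\sup_n\|b_{2,n}\|_\infty$, on $T$ and on $p$, so it applies uniformly to the approximations; the control enters only through bounded terms $\partial_x b_1$ and $b_1$.

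It then remains to show $A^n_{t,s}\to A_{t,s}$ in $L^4$, which we do term by term. The terms involving $\partial_x b_1$, $\partial_x b_3$ and $b_1 b_{2}$ with the control variable are handled by continuity and bounded derivatives together with Lemma \ref{lem:conv.Xnn}(ii) and $d(\alpha^n,\alpha)\to0$. The terms of the form $\tilde b_{2,n}(X^{n}_s)-\tilde b_2(X_s)$ are Lipschitz in $x$, since $\tilde b_2'=b_2$ is bounded. They are split as $|\tilde b_{2,n}-\tilde b_2|(X^n_s)+\|b_2\|_\infty|X^n_s-X_s|$, and the first piece vanishes by dominated convergence. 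For the stochastic integrals $\int_t^s (b_{2,n}(X^n_u)-b_2(X_u))\diffns B_u$ and the jump integral, we use BDG and Kunita's inequality. This reduces everything to showing that
\[
\E\int_0^T |b_{2,n}(X^{n,\alpha^n}_u)-b_2(X^\alpha_u)|^4\diffns u\to0 ,
\]
together with the analogous quantity for $b_{2,n}^2$ and for the jump increments of $\tilde b_{2,n}$. The latter are Lipschitz, so they follow from the $X$-convergence.

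The main obstacle is this last convergence, where a discontinuous function is composed with a converging sequence. We split it as
\[
|b_{2,n}(X^n_u)-b_2(X^n_u)|+|b_2(X^n_u)-b_2(X_u)| .
\]
The first term is controlled by the density estimate of order $u^{-1/2}$ (Theorem \ref{thm:global_bound_clean}) for $X^n$, which is uniform in $n$. This gives $\int_0^T\int|b_{2,n}-b_2|^4\rho^n_u\diffns x\diffns u\to0$ after localizing with the uniform moment bounds of Lemma \ref{lem:conv.Xnn}(i), so that dominated convergence applies on compacts; note that $u^{-1/2}$ is integrable. For the second term we use that $b_2$ is Lipschitz away from $\xi_1,\dots,\xi_k$. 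On the event that no $\xi_j$ lies between $X^n_u$ and $X_u$, the difference is bounded by $L|X^n_u-X_u|$. Otherwise $X_u$ lies within $|X^n_u-X_u|$ of some $\xi_j$. That event has probability at most $\PP(|X^n_u-X_u|>\eta)+\PP(\mathrm{dist}(X_u,\{\xi_j\})\le\eta)$, and the second probability is at most $Ck\eta u^{-1/2}$ by the density bound. We let $n\to\infty$ first and then $\eta\to0$.

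Part (ii) follows from (i). We write $P^{n}_t-P_t$ as the conditional expectation of the differences. Each product is split, for instance
\[
\Phi^n_{t,T}\partial_xg(X^n_T)-\Phi_{t,T}\partial_xg(X_T)=(\Phi^n-\Phi)\,\partial_xg(X^n_T)+\Phi\,(\partial_xg(X^n_T)-\partial_xg(X_T)),
\]
and similarly for the $f$ term. We take $L^1$ norms and use Cauchy--Schwarz together with the quadratic growth of $\partial_xg$ and $\partial_xf$ and the moment bounds. The term $\partial_xg(X^n_T)-\partial_xg(X_T)\to0$ in $L^2$ by continuity, $X^n\to X$ in probability, and uniform integrability; establishing that uniform integrability will require slightly higher moments of $X^n$, obtainable exactly as in Lemma \ref{lem:conv.Xnn}(i) using the fourth moments of $\alpha^n$. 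For the $\diffns s$ integral we use Fubini and dominated convergence in $s$.
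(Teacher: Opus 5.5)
Your outline is correct and follows the paper's skeleton. The paper proves (i) by pointing to Claim~1 in the proof of Theorem~\ref{thmmainresder1}: the exponential representation via It\^o's formula for $\tilde b_{2,n}$, the bound $|e^a-e^b|\le|a-b|(e^a+e^b)$ with H\"older, $n$-uniform moment bounds on the exponentials, and BDG and Kunita for the stochastic integrals. It proves (ii) by the same add-and-subtract and H\"older splitting you describe.

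The real difference is the key step $\E\int_0^T|b_{2,n}(X^{n,\alpha^n}_u)-b_2(X^\alpha_u)|^4\diffns u\to0$. The paper invokes Lemma~\ref{lem:time_integrated_bn_full}, which controls $b_{2,n}(X^n)-b_2(X^n)$ by Egorov plus Portmanteau and uses only the density bound of the \emph{limit} process. That argument needs a closed Egorov exceptional set, which is available e.g.\ when the convergence is locally uniform away from finitely many points. The paper leaves $b_2(X^n)-b_2(X)$ implicit in Claim~1. Your treatment compares as follows:
\begin{itemize}
\item You handle $b_2(X^n)-b_2(X)$ explicitly through the finitely many discontinuities and the $u^{-1/2}$ bound for $X^\alpha$, which is cleaner.
\item Your jump-term argument uses the $n$-uniform Lipschitz constant $2\sup_n\|b_{2,n}\|_\infty$ of $x\mapsto\tilde b_{2,n}(x+\gamma(z))-\tilde b_{2,n}(x)$. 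Together with $\int_x^{x+\gamma(z)}|b_{2,n}-b_2|\diffns y\to0$ locally uniformly in $x$ (which you should state), this is simpler than the paper's mean-value computation.
\item In (ii) you correctly use uniform integrability to handle the quadratic growth of $\partial_xg,\partial_xf$. The paper instead appeals to boundedness and Lipschitz continuity of these derivatives, which are not assumed.
\end{itemize}

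The price of your route is the piece $b_{2,n}(X^n)-b_2(X^n)$. There you need a density bound on the approximating controlled processes that is uniform in $n$, and Theorem~\ref{thm:global_bound_clean} does not give this as stated, for three reasons. It concerns a single SDE with Lipschitz coefficients. Its constant comes from an Aronson-type estimate whose dependence on the drift is not tracked, while the Lipschitz constants of $b_{2,n}$ blow up. Its Markov argument does not apply verbatim when the drift contains the random control through $b_1(x,\alpha^n_u)$.

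You can close this as follows. Make a Girsanov change of measure on the Brownian part alone that removes the bounded drift $b_1(\cdot,\alpha^n_\cdot)+b_{2,n}$; the compensator of $N$ is unchanged. Apply Theorem~\ref{thm:global_bound_clean} to the resulting Markov equation with drift $b_3$. Then use Cauchy--Schwarz with the density process, whose second moment depends only on $\|b_1\|_\infty$, $\sup_n\|b_{2,n}\|_\infty$, $\sigma$ and $T$. This yields $\PP(X^{n,\alpha^n}_u\in A)\le C(|A|/\sqrt u)^{1/2}$, with $|A|$ the Lebesgue measure and $C$ independent of $n$. That is enough for both of your pieces, and it also justifies the bound you use for $X^\alpha$.

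A minor correction: the higher moments of $X^{n,\alpha^n}$ needed for uniform integrability come from the boundedness of $b_1$, since the control enters the drift only boundedly. They do not come from $\E\sup_t|\alpha^n_t|^4<M$, which under linear growth in $a$ would give only fourth moments. The uniform integrability of $|\alpha^n|^4$ needed for the $\partial_xf$ term follows from $d(\alpha^n,\alpha)\to0$.
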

\begin{proof}
	See Section \ref{proofmainres1}. 
\end{proof}

\subsection{Proof of the Main results}\label{proofmainres1}

\begin{proof}[Proof of Theorem \ref{thm:necc}]
	Let $\hat\alpha$ be an optimal control and fix $n\ge 1$.
	Under the linear growth assumptions on $f$ and $g$, the functional $J_n$ is bounded from above.
	By Lemma \ref{lem:J.continuous}, $J_n$ is continuous on $(\mathcal{A},d)$ and there exists $\varepsilon_n$ such that 
	\begin{equation*}
		J(\hat\alpha) - J_n(\hat\alpha)\le \varepsilon_n \text{ and } J_n(\alpha) - J(\alpha) \le \varepsilon_n\quad \text{for all } \alpha \in \mathcal{A}.
	\end{equation*}
	Hence, $J_n(\hat\alpha) \le \inf_{\alpha \in \mathcal{A}}J_n(\alpha) + 2\varepsilon_n$.
	Therefore, by the Ekeland's variational principle (see \cite{Ekeland79}), there exists a control $\hat\alpha^n \in \mathcal{A}$ such that $d(\hat\alpha, \hat\alpha^n)\le (2\varepsilon_n)^{1/2}$ and 
	\begin{equation*}
		J_n(\hat\alpha^n) \le J_n(\alpha) + (2\varepsilon_n)^{1/2}d(\hat\alpha^n,\alpha)\quad \text{for all}\quad \alpha \in \mathcal{A}.
	\end{equation*}
	Define the perturbed functional
	$$J^{\varepsilon_n}_n(\alpha):= J_n(\alpha) + (2\varepsilon_n)^{1/2}d(\hat\alpha^n,\alpha),$$
	so that the control $\hat\alpha^n$ is optimal for the problem with performance functional $J^{\varepsilon_n}_n$.
	
	Let $\beta \in \mathcal{A}$ be an arbitrary and fix $\varepsilon>0$. Set $\eta=\eta_n := \beta - \hat\alpha^n$. By the convexity of $\mathbb{A}$, we have  $\hat\alpha^n + \varepsilon\eta \in \mathcal{A}$.
	Thanks to the smoothness of $b_n$, the functional $J_n$ is G\^ateau differentiable, with derivative in the direction $\eta$ given by
	\begin{align*}
		\frac{\diffns}{\diffns\varepsilon}J_n(\hat \alpha^n + \varepsilon \eta)_{|_{\varepsilon = 0}}& = \mathbb{E}\big[\int_0^T\partial_xf(t, X^{n,\hat\alpha^n}_t, \hat\alpha^n_t)V^n_t + \partial_{\alpha}f(t, X^{n,\hat\alpha^n}_t, \hat\alpha^n_t)\eta_t\mathrm{d}t\\
		&\qquad + \partial_xg(X^{n,\hat\alpha^n}_T)V^n_T \big] ,
	\end{align*}
	where $V_n$ solves the corresponding linear equation
	\begin{equation*}
		\diffns V^n(t) = \partial_xb_n(t, X^{n,\alpha}_t,\alpha_t)V^n_t\mathrm{d}t + \partial_\alpha b_n(t, X^{n,\alpha}_t,\alpha_t)\eta_t\mathrm{d}t,\quad V^n(0) = 0.
	\end{equation*}	
	Conversely, by the triangle inequality, we obtain
	\begin{equation*}
		\lim_{\varepsilon\downarrow 0}\frac{1}{\varepsilon}\big(d(\hat\alpha^n, \alpha + \varepsilon\eta) - d(\hat\alpha^n, \alpha) \big) \le \varepsilon\mathbb{E}\big[ \sup_{t \in [0,T]}|\eta_t|^4 \big]^{1/4}.
	\end{equation*}
	Hence, $J^{\varepsilon_n}_n$ is G\^ateau differentiable and the optimality of $\hat\alpha^n$ for $J^{\varepsilon_n}_n$ implies that
	\begin{align*}
		0\le \frac{\mathrm{d}}{\mathrm{d}\varepsilon}J^{\varepsilon_n}_n(\hat\alpha^n + \varepsilon \eta)_{|_{\varepsilon = 0}} =& \frac{\mathrm{d}}{\mathrm{d}\varepsilon}J_n(\hat\alpha^n + \varepsilon \eta)_{|_{\varepsilon = 0}} + \lim_{\varepsilon\downarrow 0} (2\varepsilon_n)^{1/2}\frac{1}{\varepsilon}d(\hat\alpha^n,\hat\alpha^n + \varepsilon\eta)  \\
		\leq	&  \mathbb{E}\big[\int_0^T\partial_xf\big(t, X^{n,\hat\alpha^n}_{t}, \hat\alpha^n_{t} \big)V^n_{t} + \partial_{\alpha}f\big( t, X^{n,\hat\alpha^n}_{t}, \hat\alpha^n_{t} \big)\eta_{t}\mathrm{d}t\\
		&+ \partial_xg(X^{n,\hat\alpha^n}_T)V^n_T \big] + \big(2\varepsilon_n)^{1/2}(E[\sup_t|\eta_{t}|^{4}] \big)^{1/4}\\
		\leq	& \mathbb{E}\big[\int_0^T\partial_\alpha H_n\big(t, X^{n,\hat\alpha}_{t}, P^{n,\hat\alpha^n}_{t}, \hat\alpha^n_{t} \big)\eta_{t}\mathrm{d}t \big] + C_M\varepsilon_n^{1/2},
	\end{align*}
	for a constant $C_M>0$ depending on the constant $M$ (as introduced in the definition of $\mathcal{A}$).
	The inequality follows since $\hat\alpha^n\in \mathcal{A}$, and $H_n$ denotes the Hamiltonian of the problem corresponding to the drift $b_n$ given by
	\begin{equation*}
		H_n(t,x,p,a) := f(t, x,a) + \{b_n(t,x,a)+\int_{\R_0}\gamma(z)\nu(\diffns z)\}p 
	\end{equation*}
	and $(P^{n,\hat\alpha^n}, Z^{n,\hat\alpha^n},R^{n,\hat\alpha^n}(z))$ the adjoint processes satisfying 
	\begin{equation*}
		\mathrm{d}P^{n,\hat\alpha^n}_{t} = -\partial_xH_n(t, X^{n,\hat\alpha^n}_{t}, P^{n,\hat\alpha^n}_{t}, \hat\alpha^n_{t})\mathrm{d}t + Z^{n,\hat\alpha^n}_{t}\mathrm{d}B_{t}+\int_{\R_0}R_t^{n,\hat\alpha^n}(z)\tilde N(\diffns z,\diffns t).
	\end{equation*}
	By standard arguments, we have
	\begin{equation*}
		C_M\varepsilon_n^{1/2} +\partial_\alpha H_n(t, X^{n,\hat\alpha^n}_{t}, P^{n,\hat\alpha^n}_{t}, \hat\alpha^n_{t})\cdot (\beta - \hat\alpha^n_t)) \ge 0 \quad \PP\otimes \mathrm{d}t \mathrm{-a.s}.
	\end{equation*}
	Since $b_{2,n}$ does not depend on $\alpha$, this reduces to
	\begin{equation*}
		C_M\varepsilon_n^{1/2} + \big\{ \partial_{\alpha}f(t, X^{n,\hat\alpha^n}_{t}, \hat\alpha^n_{t}) + \partial_{\alpha}b_{1}\big(t, X^{n,\hat\alpha^n}_{t}, \hat\alpha^n_{t}\big)P^{n,\hat\alpha^n}_t \big\}\cdot(\beta - \hat\alpha^n_t) \ge 0 \quad \PP \otimes \diffns t\text{-a.s.}
	\end{equation*} 
	Taking the limit as $n \rightarrow \infty$, Lemmas \ref{lem:conv.Xnn} and \ref{lem:conv.y.phi} ensure that $X^{n,\hat\alpha^n}_t \to X_t^{\hat\alpha}$ and $P^{n,\hat\alpha^n}_t \to P^{\hat\alpha}_t$ $\PP$-a.s. for all $t\in [0,T]$.
	Thus, using $\hat\alpha^n\to \alpha$,  we obtain
	\begin{equation*}
		\big\{ \partial_{\alpha}f(t, X^{\hat\alpha}_t, \hat\alpha_t) + \partial_{\alpha}b_{1}\big(t, X^{\hat\alpha}_t,  \hat\alpha_t \big)P^{\hat\alpha}_t \big\}\cdot(\beta - \hat\alpha_t) \ge 0 \quad \PP\otimes \mathrm{d}t\text{-a.s.}
	\end{equation*}
	This shows \eqref{eq:nec.cond}, which concludes the proof.
\end{proof}

\begin{proof}[Proof of Theorem \ref{thm:suff}]
	Let $\hata \in \mathcal{A}$ satisfy \eqref{eq:suff.con} and $\alpha'$ an arbitrary element of $\mathcal{A}$.
	We would like to show that $J(\hata) \ge J(\alpha')$.
	Let $n \in \mathbb{N}$ be arbitrarily chosen.
	By definition, we have
	\begin{align*}
		&J_n(\hata) - J_n(\alpha')\\
		& = \E\big[g(X^{n,\hata}_T) - g(X^{n,\alpha'}_T) + \int_0^Tf(u, X_u^{n,\hata}, \hata_u) - f(u, X_u^{n,\alpha'}, \alpha'_u)\diff u  \big]	\\
		&\ge \E\big[\partial_xg(X^{n,\hata}_T)\big\{X^{\hata}_T -X^{n,\alpha'}_T\big\} + \int_0^T\big\{ b_n(u, X^{n,\alpha'}_u, \alpha'_u) - b_n(u, X_u^{n,\hata},\hata_u)\big\} P^{n,\hata}_u\diff u\\
		&\quad + \int_0^T H_n(u, X_u^{n,\hata}, P_u^{n,\hata}, \hata_u) - H_n(u, X_u^{n,\alpha'},P_u^{n,\hata}, \alpha'_u)\diff u  \big],
	\end{align*}	
	where we used the definition of $H_n$ and the fact that $g$ is concave.
	Since $P_n^{\hata}$ satisfies
	\begin{equation*}
		P^{n,\hata}_t = \E\big[\Phi_{t,T}^{n,\hata} \partial_xg( X^{n,\hata}_T) + \int_t^T\Phi_{t,u}^{n,\hata} \partial_xf(u, X_u^{n,\hata}, \hata_u)\mathrm{d}u\mid \mathcal{F}_t \big],
	\end{equation*}
	it follows by the martingale representation and the It\^o's formula that there is a square integrable progressive process $(P^{\hata}_n,Z^{\hata}_n, R_u^{n,\hat\alpha})$  such that $P_n^{\hata}$ satisfies the equation 
	\begin{align*}
		P^{n,\hata}_t =& \partial_xg(X^{n,\hata}_T) + \int_t^T\partial_xH_n(u, X^{n,\hata}_u, P_u^{n,\hata},\hata_u)\diff u - \int_t^TZ_u^{n,\hata}\diff W_u\\
		&-\int_t^T\int_{\R_0}R_u^{n,\hat\alpha}(z)\tilde N(\diffns z,\diffns u).
	\end{align*}
	Since $b_n$ is smooth, so is $H_n$.
	Therefore, by the It\^o's formula, we have
	\begin{align*}
		&P^{n,\hata}_T\big\{X_T^{n,\hata} - X^{n,\alpha'}_T\big\}\\
		=& \int_0^TP^{n,\hata}_u\big\{b_n(u, X^{n,\hata}_u,\hata_u) - b_n(u, X^{n,\alpha'}_u,\alpha'_u) \big\}\diff u\\
		& - \int_0^T\big\{X^{n,\hata}_u - X^{n,\alpha'}_u \big\}\partial_xH_n(u, X^{n,\hata}_u, P_u^{n,\hata},\hata_u)\diff u \\
		&+ \int_0^T\big\{X^{n,\hata}_u - X^{n,\alpha'}_u \big\} Z^{n,\hata}_u\diff W_u+\int_0^T\int_{\R_0}\big\{X^{n,\hata}_u - X^{n,\alpha'}_u \big\}R_u^{n,\hat\alpha}(z)\tilde N(\diffns z,\diffns t).
	\end{align*}
	Since the stochastic integral above is a local martingale, a standard localization argument permits one to take expectations on both sides, which yields	
	\begin{align*}
		J_n(\hata) - J_n(\alpha') &\ge \E\big[- \int_0^T\big\{X^{n,\hata}_u - X^{n,\alpha'}_u \big\}\partial_xH_n(u, X^{n,\hata}_u, P_u^{n,\hata},\hata_u)\diff u \\
		&\quad + \int_0^T H_n(u, X_u^{n,\hata}, P_u^{n,\hata}, \hata_u) - H_n(u, X_u^{n,\alpha'},P_u^{n,\hata}, \alpha'_u)\diff u   \big]\\
		&\ge \E\big[\int_0^T \partial_\alpha H_n(u, X_u^{n,\hata}, P_u^{n,\hata}, \hata_u)\cdot(\hata_u - \alpha'_u)\diff u  \big],
	\end{align*}
	where the latter inequality follows by concavity of $(x,a)\mapsto H(t,x,P^{\hat{\alpha}}_t,a)$. Substituting this into the expression $J(\hata) - J(\alpha')$, we have
	\begin{align*}
		J(\hata) - J(\alpha') & = J(\hata) - J_n(\hata) + J_n(\hata) - J_n(\alpha') + J_n(\alpha') - J(\alpha')\\
		&\ge J(\hata) - J_n(\hata) + \E\big[\int_0^T \partial_\alpha H_n(u, X_u^{n,\hata}, P_u^{n,\hata}, \hata_u)\cdot(\hata_u - \alpha'_u)\diff u  \big]\\
		&\quad  + J_n(\alpha') - J(\alpha').
	\end{align*}
	Since $b_{2,n}$ is independent of $\alpha$, we have 
	$$\partial_\alpha H_n(u, X_u^{n,\hata}, P_u^{n,\hata}, \hata_u) = \partial_\alpha b_{1}(u, X^{n,\hata}_u,\hata_u)P^{n,\hata}_u + \partial_\alpha f(u, X^{n,\hata}_u,\hata_u).$$
	Passing to the limit as $n \rightarrow \infty$, it follows from Lemmas \ref{lem:conv.Xnn}, \ref{lem:J.continuous} and \ref{lem:conv.y.phi} that
	\begin{align*}
		J(\hata) - J(\alpha') \ge \E\big[\int_0^T \partial_\alpha H(u, X^{\hata}_u, P^{\hata}_u, \hata_u)\cdot(\hata_u - \alpha'_u)\diff u  \big].
	\end{align*}
	Since $\hata$ satisfies \eqref{eq:suff.con}, we conclude that $J(\hata) \ge J(\alpha')$.
\end{proof}

\begin{proof}[Proof of Lemma \ref{lem:conv.Xnn}]
	The proof of $(i)$ is standard, it follows from the linear growth property of $b_n:=b_1+b_{2,n}+b_3$ uniformly in $n$, i.e. $|b_n(t,x,a)| \le C(1 + |x| + |a|)$ for some $C>0$ and all $n\ge1$.
	
	Let us turn to the proof of $(ii)$. 
	By adding and subtracting the same term and applying the mean value theorem, we obtain
	\begin{align*}
		&	X^{n,\alpha^1}_t - X^{\alpha^2}_t \\
		= &\int_0^t\int_0^1\big\{\partial_xb_{1}(  \Lambda_n(\lambda,s),\alpha^1_s) +\partial_xb_{3}(  \Lambda_n(\lambda,s)) + \partial_xb_{2,n}\big(\Lambda_n(\lambda,s)\big)\big\}\mathrm{d}\lambda\\
		&\times(X^{n,\alpha^1}_s - X^{\alpha^2}_s)\mathrm{d}s\\
		& + \int_0^tb_1( X^{\alpha^2}_s,\alpha^1_s) - b_1( X^{\alpha^2}_s, \alpha^2_s))\diff s+\int_0^t\big( b_{2,n}( X^{\alpha^2}_s)- b_{2}( X^{\alpha^2}_s\big)\diffns s
	\end{align*}
	where $\Lambda_n(\lambda,t)$ is the process given by $\Lambda_n(\lambda,t):= \lambda X^{n,\alpha^1}_t + (1 - \lambda)X^{\alpha^2}_t$.

	Therefore, we obtain that $X^{n,\alpha^1} - X^{\alpha^2}$ admits the representation
	\begin{align*}
		&X^{n,\alpha^1}_t - X^{\alpha^2}_t\\
		=& \int_0^t\exp\big(\int_{s}^t\int_0^1\partial_xb_{1}(  \Lambda_n(\lambda,r),\alpha^1_r)+\partial_xb_{3}(  \Lambda_n(\lambda,s)) + \partial_xb_{2,n}(\Lambda_n(\lambda,r))\mathrm{d}\lambda\mathrm{d}r \big)\\
		&\times \Big\{b_1( X^{\alpha^2}_s,\alpha^1_s) - b_1( X^{\alpha^2}_s, \alpha^2_s) +\big( b_{2,n}( X^{\alpha^2}_s)- b_{2}( X^{\alpha^2}_s)\big)\big\}\mathrm{d}s.
	\end{align*}
	Hence, taking the expectation on both sides above and then using twice the Cauchy-Schwarz inequality, we have that
	\begin{align}
		&	\notag
		\E\big[|X^{n,\alpha^1}_t - X^{\alpha^2}_t|^2\big]\\
		\le&\quad 8T^2\E\big[\int_0^t \exp\big\{4\int_{s}^t\int_0^1\big(\partial_xb_{1}(  \Lambda_n(\lambda,r),\alpha^1_r) + \partial_xb_{2,n}(\Lambda_n(\lambda,r))\big)\mathrm{d}\lambda\mathrm{d}r \big\}\mathrm{d} s\big]^{1/2}\notag\\
		\label{eq:estim.diff.x}
		&\quad\times \E\big[\int_0^{t}|b_1( X^{\alpha^2}_s,\alpha^1_s) - b_1( X^{\alpha^2}_s, \alpha^2_s) |^4+ |\big( b_{2,n}( X^{\alpha^2}_s)- b_{2}( X^{\alpha^2}_s)\big)|^4\diff s\big]^{1/2}.
	\end{align}
	By the Lipschitz continuity of $b_1$ and $b_3$, and the boundedness of $b_2$, we have
	\begin{align}
		\label{eq:estim.alpha12}
		\E\big[\int_0^T|b_1( X^{\alpha^2}_s,\alpha^1_s) - b_1( X^{\alpha^2}_s, \alpha^2_s) |^4\diff s \big]
		\le& C\E\big[\int_0^T|\alpha^1_s - \alpha^2_s|^4\diff s \big]\notag\\
		\le& C(d(\alpha^1,\alpha^2))^4.
	\end{align}

	Using the fact that the law of $X^{\alpha^2}_t$ has a density with respect to the Lebesgue measure denoted by $\rho^{\alpha^2}_t(x)$, we have
	\begin{align*}
		&\E\big[\int_0^{T}\big| b_{2,n}( X^{\alpha^2}_s)- b_{2}( X^{\alpha^2}_s)\big|^4\mathrm{d}s\big]\\ 
		\leq&\E\big[\int_0^{T}| b_{2,n}( X^{\alpha^2}_s)- b_{2}( X^{\alpha^2}_s)|^4\mathrm{d}s\big]=  \int_0^{T}\int_{\mathbb{R}}| b_{2,n}( x)- b_{2}(x)|^4\rho^{\alpha^2}_s(x)\diffns x\mathrm{d}s.
	\end{align*}
	Let us now turn our attention to the first term in \eqref{eq:estim.diff.x}.
	Since $\Lambda_{n}(\lambda,t)$ takes the form 
	\begin{align*}
		\Lambda_{n}(\lambda, t) =& x_0 + \int_0^t\big\{\lambda b_{n}(s, X^{n,\alpha^1}_s,\alpha^1_s) + (1-\lambda)b(s, X^{\alpha^2}_s,\alpha^2_s)\big\}\diff s \notag\\
		&+ \sigma B(t)+\int_0^t\int_{\mathbb{R}\backslash \{0\}}\gamma (z) N(\mathrm{d}z,\diffns t)\\
		=&  x_0 +\int_0^tb_n^{\lambda,\alpha^2,\alpha^1}(s)\mathrm{d}s + \sigma B_t+\int_0^t\int_{\mathbb{R}\backslash \{0\}}\gamma (z) N(\mathrm{d}z,\diffns t),
	\end{align*} 
	where $$b_n^{\lambda,\alpha^2,\alpha^1}(s):=\lambda b_{n}(s, X^{n,\alpha^1}_s,\alpha^1_s) + (1-\lambda)b(s, X^{\alpha^2}_s,\alpha^2_s).$$
	We use the Jensen's inequality, and Lipschitz continuity of $b_1$ to get
	\begin{align}
		\notag
		&	\E\big[\exp\big\{4\int_{s}^t\int_0^1\big(\partial_xb_{1}(  \Lambda_n(\lambda,r),\alpha^1_r) +\partial_xb_{3}(  \Lambda_n(\lambda,r))+ \partial_xb_{2,n}(\Lambda_n(\lambda,r))\big)\diff\lambda\mathrm{d}r\big\} \big]\\
		\le& C\int_0^1 \E\big[\exp\big(8\int_{s}^t| \partial_xb_{2,n}(\Lambda_n(\lambda,r))|\mathrm{d}r \big) \big]^{1/2}\diff\lambda.
		\label{eq:estime.bprime} 
	\end{align}
	Set $
	\tilde b_{2,n}(x)=\int_{-\infty}^x  b_{2,n}(y)\diffns y.
	$

	It\^o's formula for L\'evy processes, one has:
	\begin{align*}
		&\tilde b_{2,n}(\Lambda_n(\lambda,t))- \tilde b_{2,n}(\Lambda_n(\lambda,s))\\
		=&	\int_s^tb_{2,n}(\Lambda_n(\lambda,r))b^{\lambda,\alpha^2}(r) \mathrm{d}r+\int_s^t b_{2,n}(\Lambda_n(\lambda,r)) \mathrm{d}B_r+\frac{1}{2}\int_s^t\partial_x  b_{2,n}(\Lambda_n(\lambda,r)) \mathrm{d}r	\notag\\
		&+\int_s^t\int_{\mathbb{R}\backslash\{0\}}\big\{ \tilde b_{2,n}(\Lambda_n(\lambda,r)+\gamma(z))-\tilde b_{2,n}(\Lambda_n(\lambda,r))\big\} N(\mathrm{d}z,\mathrm{d}r).
	\end{align*}
	Using the uniform boundedness of $b_{2,n}$ and the linear growth property of  $b^{\lambda,\alpha^2}$, it follows, by an argument similar to those in the proof of Theorem \ref{thmmainresder1}, that
	\begin{align}
		\label{eq:bound.bprime}
		\sup_n\E\big[\exp\big(16\int_{s}^t \partial_x b_{2,n}(\Lambda_n(\lambda,r))\mathrm{d}r \big) \big]^{1/4}
		\leq C.
	\end{align}
	Therefore, putting together \eqref{eq:estim.diff.x}, \eqref{eq:estim.alpha12}, \eqref{eq:estime.bprime} and \eqref{eq:bound.bprime} concludes the proof.
	
	Since $b_k$ is Lipschitz continuous the convergence (iii) follows by classical arguments, the proof is omitted.
\end{proof}

\begin{proof}[Proof of Lemma \ref{lem:J.continuous}]
	(i)	The continuity of $J_k$ easily follows by continuity of $f$ and $g$, the dominated convergence and Lemma \ref{lem:conv.Xnn}.

	(ii) is also a consequence of Lemma \ref{lem:conv.Xnn}.
	Indeed, by the linear growth of $\partial_xf$ and $\partial_xg$ and the Cauchy-Schwarz inequality and Fubini's theorem, we have
	\begin{align*}
		| J_n(\alpha) - J(\alpha) |&\le  \E\Big[|g(X^{n,\alpha}_T) - g(X^{\alpha}_T)| + \int_0^T|f(t, X^{n,\alpha}_t, \alpha_t) - f(t, X^{\alpha}_t, \alpha_t) |\diff t \Big]\\
		&\le \E\Big[\int_0^1|\partial_xg(\lambda X^{n,\alpha}_T) + (1-\lambda)X^{\alpha}_T|\diff \lambda|X^{n,\alpha}_T - X^{\alpha}_T|\Big]\\
		&\quad + \E\Big[\int_0^T\int_0^1|\partial_{x}f\big(t, \lambda X^{n,\alpha}_t+(1-\lambda)X^{\alpha}_t,\alpha_t \big)|\diff \lambda|X^{n,\alpha}_t - X^{\alpha}_t|\diff t \Big]\\
		&\le C\E\Big[1+\sup_{t\in [0,T]}\big( |X^{n,\alpha}_t|^2 + |X^\alpha_t|^2\big) \Big]^{1/2}\Big(\sup_{t \in [0,T]}\E\Big[|X^{n,\alpha}_t - X^\alpha_t|^2\Big]\Big)^{1/2},
	\end{align*}
	Therefore, by Lemma \ref{lem:conv.Xnn} (i) and (ii), we have
	\begin{align*}
		| J_n(\alpha) - J(\alpha) | &\le C\sup_{t\in [0,T]}\E[|X^{n,\alpha}_t - X^{\alpha}_t|^2]^{1/2}
		\le C\varepsilon_n.
	\end{align*}
\end{proof}

\begin{proof}[Proof of Lemma \ref{lem:conv.y.phi}]
	(i)	The existence of the process $\Phi^{\alpha^n}_n$ is standard, it follows for instance by \cite[Theorem 3.3.2]{KunitaB19}.
	The existence of the first variation $\Phi^{\alpha}$ follows by Theorem \ref{thmmainresder1}.
	We know from \eqref{eqfirstvarX1} that $\Phi^\alpha$ admits the following representation 
	\begin{align*} 
		\Phi^\alpha_{t,s}  =&\exp\big\{\int_t^s\partial_xb_1(X^{\alpha}_u,\alpha_u) \mathrm{d}u+\int_t^s\partial_xb_3(X^{\alpha}_u) \mathrm{d}u+2\big(\tilde b_2(X^{\alpha}_s)- \tilde b_2(X^{\alpha}_t)\notag\\
		&-	\int_t^sb_2^2(X^{\alpha}_u) \mathrm{d}u\-\int_t^sb_2(X^{\alpha}_u) \mathrm{d}B_u\\
		&	-\int_t^s\int_{\mathbb{R}\backslash\{0\}}\big\{ \tilde b_2(X^{\alpha}_u+\gamma(z))-\tilde b_2(X^{\alpha}_u)\big\} N(\mathrm{d}z,\mathrm{d}u)\big)\big\}
	\end{align*}
	and $\Phi_n^{\alpha^n}$ admits the same representation with $(b_2, X^{\alpha},\alpha)$ replaced by \\$(b_{2,n}, X^{\alpha^n}, \alpha^n)$. The proof the $L^2$-convergence follows as in the proof of \textbf{Claim 1} (see proof of Theorem \ref{thmmainresder1} in Appendix \ref{appen}).

	\vspace{.2cm}
	(ii)	For this proof, first compute the difference $P_t^{n,\alpha^n}-P^\alpha_t$, add and subtract the terms $\Phi^{\alpha}_{t,T} \partial_xg(X_T^{n,\alpha^n})$ and $\int_t^T\Phi^{\alpha}_{t,u} \partial_xf(u,X_u^{n,\alpha^n}, \alpha^n_u)\diff u$ and then apply H\"older's inequality to obtain
	\begin{align}\label{eq:conv.Y}
		\notag
		&\E[|P_t^{n,\alpha^n}-P^\alpha_t|]\\\notag
		\leq & C_T\big\{\E\big[\big|\Phi^{\alpha}_{t,T}\big|^2\big]^{\frac{1}{2}}\E\big[|\partial_xg( X_T^{n,\alpha^n}) - \partial_xg( X^{\alpha}_T)|^2\big]^{\frac{1}{2}}\\ 
		&+\E\big[|\partial_xg( X_T^{n,\alpha^n})|^2\big]^{\frac{1}{2}}\E\big[\big|\Phi^{n,\alpha^n}_{t,T} - \Phi^{\alpha}_{t,T}\big|^2\big]^{\frac{1}{2}}\notag\\\notag
		&+\E\big[\int_t^T|\Phi^{\alpha}_{t,u}|^2\diff u\big]^{\frac{1}{2}} \E\big[\int_0^T|\partial_xf(u, X^{\alpha}_u, \alpha_u)-\partial_xf(u, X_u^{n,\alpha^n}, \alpha^n_u)|^2\diff u\big]^{\frac{1}{2}}\notag\\
		&+\E\big[\int_0^T|\partial_xf(u, X_u^{n,\alpha^n}, \alpha_u^n)|^2\diff u\big]^{\frac{1}{2}}\E\big[\int_0^T|\Phi_u^{n,\alpha^n}-\Phi^{\alpha}_u|^2\diff u\big]^{\frac{1}{2}}\big\}%
	\end{align}
	for some constant $C_T$ depending only on $T$.
	Since the process $\Phi^{\alpha}$ admits moments of all order by Theorem \ref{thmmainresder1}, it follows from the boundedness and Lipschits continuity of $\partial_xg$ and $\partial_xf$, together with Lemma \ref{lem:conv.Xnn}, that the first and third terms converge to zero as $n\rightarrow \infty$.
	Moreover, using the	linear growth of $\partial_xf$ and $\partial_xg$, Lemma \ref{lem:conv.Xnn}(i) and the $L^2$-convergence of $\Phi^{n,\alpha^n}_{t,u}$ to $\Phi^{\alpha}_{t,u}$ established in part (i), we conclude that the second and fourth terms in \eqref{eq:conv.Y} also converge to zero. This completes the proof of (ii).
\end{proof}

\section{Application}\label{secappl}

In this section, we apply the stochastic maximum principle developed in the previous sections to study the problem introduced in Subsection \ref{motivexam}. More precisely the problem is the following:

\begin{prob}\label{probex1}
	Find $\hat \alpha \in \mathcal{A}$ such that 
	\begin{equation}\label{pracex21}
		J(\hat \alpha(\cdot)) = \inf_{\alpha \in \mathcal{A}}\mathbb{E}\big[(X^{\alpha}_T)^2\big],
	\end{equation} 
	where the state process \(X^\alpha\) satisfies
	\begin{equation}\label{pracex11}
		\diffns X^{\alpha}_t 
		=-\big\{\delta_t X^{\alpha}_t + b_t + \alpha_t - \beta \, \mathrm{sgn}(X^{\alpha}_t) \mathbf{1}_{\{|X^{\alpha}_t| > H\}}\big\}\diffns t -\sigma_t \diffns B_t -\int_{\mathbb{R}\backslash \{0\}}z N(\mathrm{d}z,\mathrm{d}t),
	\end{equation}
	and the control \(\alpha\) takes values in $\mathbb{A} = [-a,a]$, where $a>0$. 
\end{prob}
In order to have an explicit solution, we use the diffusion approximation of surplus process (see for example \cite{ Gran91, norberg99, Schmid08}) so that the SDE \eqref{pracex11} becomes
\begin{equation*}
	\diffns X^{\alpha}_t 
	=-\big\{\delta_t X^{\alpha}_t + b_t + \alpha_t - \beta \, \mathrm{sgn}(X^{\alpha}_t) \mathbf{1}_{\{|X^{\alpha}_t| > H\}}\big\}\diffns t - \sigma_t \diffns B_t -\big\{\int_{\mathbb{R}\backslash \{0\}}z^2\nu(\mathrm{d}z)\big\}\mathrm{d}\bar B_t,
\end{equation*}
where $\bar B$ is a Brownian motion independent of $B$.  Note that without loss of generality, we suppose that $\sigma_t=\sigma$ and $b_t=b=0$. 

The above equation can be written as 
\begin{equation}\label{pracex112}
	\diffns X^{\alpha}_t 
	=-\big\{\delta_t X^{\alpha}_t + \alpha_t - \beta\, \mathrm{sgn}(X^{\alpha}_t) \mathbf{1}_{\{|X^{\alpha}_t| > H\}}\big\}\diffns t - \sigma  \diffns B_t - \bar \sigma\mathrm{d}\bar B_t,\,\,t\in[0,T],\,\,\, X^{\alpha}_t=x
\end{equation}
where $\bar \sigma=\int_{\mathbb{R}\backslash \{0\}}z^2\nu(\mathrm{d}z)$. The main result of this section is the following
\begin{prop}
	The optimal control for the Problem \ref{probex1} is given as
	\begin{equation}
		\label{eq:max}
		\hat\alpha_t =a\sgn\big(\hat X_t\big).
	\end{equation}
\end{prop}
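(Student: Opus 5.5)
We will apply the necessary condition of Theorem \ref{thm:necc} to Problem \ref{probex1}, rewritten as a maximisation by setting $g(x)=-x^2$ and $f\equiv 0$. In the notation of Assumption \ref{mainassum2}, the drift in \eqref{pracex112} splits as
\begin{itemize}
\item $b_1(x,a)=-a$, which is bounded on $\mathbb{A}=[-a,a]$;
\item $b_3(x)=-\delta x$;
\item $b_2(x)=\beta\,\sgn(x)\mathbf 1_{\{|x|>H\}}$, which is bounded and piecewise Lipschitz with discontinuities at $\pm H$.
\end{itemize}
The noise is now additive Brownian (two independent Brownian motions, or equivalently a single one with variance $\sigma^2+\bar\sigma^2$), so the appendix argument applies with no jump term. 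The Hamiltonian is $H(t,x,p,a)=b(x,a)p$, so $\partial_a H=-p$. The adjoint is $P_t=\E[-2\Phi^{\hat\alpha}_{t,T}\hat X_T\mid\mathcal F_t]$, with $\Phi$ the explicit Sobolev variation from Theorem \ref{thmmainresder1}; in particular $\Phi>0$.

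\textbf{Step 1: the necessary condition gives a bang-bang rule.} Condition \eqref{eq:nec.cond} reads $-P_t(\beta-\hat\alpha_t)\ge 0$ for all $\beta\in[-a,a]$. Hence $\hat\alpha_t=-a\,\sgn(P_t)$ wherever $P_t\neq0$, so we only need the sign of $P_t$. Since $\Phi>0$, we get $\sgn(P_t)=-\sgn\big(\E[\Phi_{t,T}\hat X_T\mid\mathcal F_t]\big)$. The claim is therefore equivalent to
\[
\sgn\big(\E[\Phi_{t,T}\hat X_T\mid\mathcal F_t]\big)=\sgn(\hat X_t).
\]

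\textbf{Step 2: the sign identity.} We argue by symmetry and monotonicity. The drift is odd in $(x,a)$, and the noise law is symmetric. The candidate feedback $a\,\sgn(x)$ is odd, so the closed-loop SDE $\diffns X=-\{\delta X+a\,\sgn X-\beta\,\sgn X\,\mathbf 1_{|X|>H}\}\diffns t-\diffns(\text{noise})$ has a strong solution, by \cite[Theorem 4.1]{PrSX21} since the drift remains piecewise Lipschitz. Moreover, the map $y\mapsto X^{t,y}_T$ is odd in law and nondecreasing, by the comparison principle for one-dimensional SDEs with additive noise.

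By the Markov property, $\E[\Phi_{t,T}X_T\mid\mathcal F_t]=u(t,\hat X_t)$, where $u(t,y)=\E[\partial_yX^{t,y}_T\,X^{t,y}_T]=\tfrac12\partial_y\E[(X^{t,y}_T)^2]$; this uses Theorem \ref{thmmainresder1} to identify $\Phi$ with the Sobolev derivative. Since $v(y)=\E[(X^{t,y}_T)^2]$ is even, it suffices to show $v$ is nondecreasing on $y>0$. For this we couple $X^{t,y}$ and $X^{t,-y}$ through the reflected noise, and use comparison together with the coupling $X^{t,y'}\ge X^{t,y}$ for $y'>y$.

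\textbf{Step 3: verification.} Step 2 shows that $\hat\alpha=a\,\sgn(\hat X)$ satisfies the first-order condition, and the bang-bang structure identifies it as the unique candidate. To conclude optimality we cannot use Theorem \ref{thm:suff} directly, because \eqref{eq:suff.con} requires $\partial_aH=0$, which is not available for a boundary control. Instead we will use the variational-inequality version of the proof of Theorem \ref{thm:suff}. The inequality
\[
J(\hat\alpha)-J(\alpha')\ge\E\int_0^T\partial_aH(\hat\alpha_u-\alpha'_u)\,\diffns u
\]
holds there, given concavity: $H$ is linear in $a$, and $g$ is concave. The right-hand side is nonnegative by Step 1.

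The main obstacle is the concavity of $H$ in $x$. The term $b_2(x)P$ is not concave, so we would pass through the smooth approximations $b_{2,n}$ and control the error in the limit using Lemma \ref{lem:conv.y.phi}.

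The key difficulty overall is Step 2, showing $v$ is monotone in $|y|$ despite the destabilising $-\beta\,\sgn$ term outside $[-H,H]$. There we would first try the reflection coupling, and otherwise fall back on analysing the sign of the explicit exponential formula for $\Phi$.
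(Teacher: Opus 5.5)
Your Step 3 does not go through. As it stands, the proposal shows at most that $a\sgn(\hat X_t)$ is consistent with the necessary condition, not that it is optimal.

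\begin{itemize}
\item \textbf{The smoothing fix fails.} The inequality you borrow from the proof of Theorem \ref{thm:suff} is, for each $n$, the tangent-line inequality for $x\mapsto H_n(u,x,P^{n,\hata}_u,a)$, i.e.\ concavity of $x\mapsto b_{2,n}(x)P^{n,\hata}_u$. A smoothed step is neither concave nor convex, so this fails for every $n$. The defect does not shrink as $n\to\infty$, since the slope of $b_{2,n}$ near $\pm H$ blows up. Lemma \ref{lem:conv.y.phi} controls $\Phi^n$ and $P^n$, not this defect.
\item \textbf{The limit inequality is false.} On $\{\hat X_u>H\}$ you have $P_u<0$, and for $|x'|<H$ the required inequality reduces to $P_u\big(b_2(\hat X_u)-b_2(x')\big)=\beta P_u\ge0$.
\item \textbf{No uniqueness or existence.} The bang-bang form does not make $\hata$ the unique candidate, and without an existence result the necessary condition cannot certify optimality.
\end{itemize}

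The missing idea is the one behind the paper's It\^o--Tanaka step (uniqueness in law, as in the references it cites). The cost is increasing in $|X_T|$. By Tanaka's formula, $|X^\alpha|$ is reflected at $0$ and driven by $-\int_0^\cdot\sgn(X^\alpha_s)(\sigma\diffns B_s+\bar\sigma\diffns\bar B_s)$, a Brownian motion with variance rate $\sigma^2+\bar\sigma^2$. Its drift satisfies $-\delta|X^\alpha_t|-\sgn(X^\alpha_t)\alpha_t+\beta\mathbf{1}_{\{|X^\alpha_t|>H\}}\ge-\delta|X^\alpha_t|-a+\beta\mathbf{1}_{\{|X^\alpha_t|>H\}}$, with equality exactly when $\alpha_t=a\sgn(X^\alpha_t)$. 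A comparison theorem for this reflected equation, plus uniqueness in law, then gives that $|X^\alpha_T|$ stochastically dominates $|\hat X_T|$ for every admissible $\alpha$. The upward jump of the drift at $H$ must be handled, e.g.\ by a transformation as in the appendix. No concavity is needed.

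Step 1 is the paper's reduction, modulo a sign slip noted at the end. Your Step 2 is a genuinely different route to the sign identity, and its monotonicity part is sound. For $0<y<y'$, drive $X^{t,y}$ and $X^{t,y'}$ by the same noise, so that $|X^{t,y}|\le X^{t,y'}$ until $X^{t,y}+X^{t,y'}$ first vanishes. From then on replace $X^{t,y}$ by $-X^{t,y'}$, which by oddness of the closed-loop drift solves the equation with reflected noise. Uniqueness in law gives a copy of $X^{t,y}_T$ of modulus at most $|X^{t,y'}_T|$, whatever the sign of the $\beta$-term.

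What is not justified is identifying $\E[\Phi^{\hata}_{t,T}\hat X_T\mid\mathcal F_t]$ with $\tfrac12\partial_y\E[(X^{t,y}_T)^2]$ at $y=\hat X_t$.
\begin{itemize}
\item The $\Phi^{\hata}$ of Theorem \ref{thm:necc} is the variation with the control process frozen ($\partial_xb_1=0$ here), so it only sees the jumps of $b_2$ at $\pm H$.
\item The Sobolev derivative of the closed-loop flow from Theorem \ref{thmmainresder1} carries an extra factor $\exp\{-\kappa(L^0_T-L^0_t)\}$, $\kappa>0$, from the jump of $a\sgn(x)$, where $L^0$ is the local time of $\hat X$ at $0$.
\end{itemize}

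The two conditional expectations do coincide, but the reason is exactly the paper's argument. Set $\tau=\inf\{s\ge t:\hat X_s=0\}$ and write $\Phi_{t,T}=\Phi_{t,\tau}\Phi_{\tau,T}$ on $\{\tau\le T\}$. There $\Phi_{\tau,T}$ (and $L^0_T-L^0_\tau$) is invariant under $(\hat X,B,\bar B)\mapsto(-\hat X,-B,-\bar B)$, while $\hat X_T$ is odd and $\hat X_\tau=0$, so $\E[\Phi_{\tau,T}\hat X_T\mid\mathcal F_\tau]=0$. On $\{\tau>T\}$, $\hat X_T$ keeps the sign of $\hat X_t$ and $\Phi>0$. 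This argument gives the sign identity directly, strictly, and for either version of $\Phi$. It is shorter than your route and is needed anyway to connect your $v$ to the adjoint.

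Finally, the sign slip in Step 1. With $g=-x^2$ and a $\sup$ problem, the maximum condition is $\partial_aH\,(\beta-\hata_t)\le0$; Theorem \ref{thm:necc} as stated mixes the $\sup$ and $\inf$ conventions. The inequality $-P_t(\beta-\hata_t)\ge0$ you wrote yields $\hata_t=a\sgn(P_t)$, the opposite of the rule you then use.
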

\begin{proof}
	Observe that the terminal condition $g:x \mapsto g(x)=x^2$ is even, continuously differentiable, and increasing for $x>0$. Thus the optimal control $\hat{\alpha}$ should minimize the absolute value $|X_T^{\alpha}|$. 
	
	Moreover, $b_2:x \mapsto b_2(x)=\beta \sgn(x)\mathbf{1}_{\{|x|>H\}}$ is odd, bounded and piecewise Lipschitz, whereas $b_3:x \mapsto b_3(x)=-\delta x$ is Lipschitz and grows linearly. The proof follows the same argument as in \cite{MenTan22} (see also \cite{BDM07}) by using uniqueness in law. Applying Itô–Tanaka's formula to $|X_t^\alpha|$ yields
	$$
	\diff |X_t^{\alpha}| = -\sgn(X_t^{\alpha}) \big\{ \big( \delta_t X^{\alpha}_t + \alpha_t - \beta\, \mathrm{sgn}(X^{\alpha}_t) \mathbf{1}_{\{|X^{\alpha}_t| > H\}} \big)\mathrm{d}t
	+ \sigma\,\mathrm{d}B_t +\bar \sigma\,\mathrm{d}\bar B_t\big\} - \mathrm{d}L_t^{X^\alpha}(0),
	$$
	where $L_t^{X}(0)$ denotes the local time of $X$ at zero.  
	Thus to minimize $|X_t^{^\alpha}|$, we choose
	$$
	\hat{\alpha}_t = a\sgn\big(\hat{X}_t\big).
	$$
	
	The Hamiltonian 
	$$
	H(t,x,\alpha,p) = \big(-\delta x-\alpha + \beta\sgn(x)\mathbf{1}_{\{|x|>H\}}\big)p.
	$$
	is maximised by
	$$
	\hat{\alpha} = -a\sgn(p).
	$$
	Set $\hat{X}_t=X^{\hat{\alpha}}_t$ and $\hat{P}_t=P^{\hat{\alpha}}_t$. From Theorem~\ref{thm:necc}, the adjoint process satisfies
	$$
	\hat{P}_t = -2\,\mathbb{E}\big[\Phi^{\hat{\alpha}}_{t,T}\,\hat X_T|\mathcal{F}_t\big],
	$$
	where
	\begin{align}\label{abc12}
		\Phi^{\hat{\alpha}}_{t,T} 
		=& \exp\big\{
		\int_t^T \delta\,\mathrm{d}s
		+ \int_{\mathbb{R}}\big(a\sgn(z)-\beta\mathbf{1}_{\{|z|>H\}}\sgn(z)\big)\big(L_T^{\hat{X}}(\mathrm{d}z)-L_t^{\hat{X}}(\mathrm{d}z)\big)
		\big\} \notag \\
		=&\exp\big\{
		\delta(T-t)
		+ 2\big( 
		\bar{b}_2(\hat{X}_T) - \bar{b}_2(\hat{X}_t) \notag \\
		&	+ \delta\int_t^T \big(a\sgn(\hat X_s)-\beta\mathbf{1}_{\{|\hat X_s|>H\}}\sgn(\hat X_s)\big)\hat X_s\,\mathrm{d}s \notag \\
		&
		-\int_t^T\big(a\sgn(\hat X_s)-\beta\mathbf{1}_{\{|\hat X_s|>H\}}\sgn(\hat X_s)\big)^2\,\mathrm{d}s\notag\\
		&	+ \int_t^T \big(a\sgn(\hat X_s)-\beta\mathbf{1}_{\{|\hat X_s|>H\}}\sgn(\hat X_s)\big)\sigma\,\mathrm{d}B_s\notag\\
		&	+ \int_t^T \big(a\sgn(\hat X_s)-\beta\mathbf{1}_{\{|\hat X_s|>H\}}\sgn(\hat X_s)\big)\bar \sigma\,\mathrm{d}\bar B_s
		\big)
		\big\}.
	\end{align}
	
	To obtain this expression, define
	$$
	\bar{b}_2(x) := \int_{-\infty}^{x}\big(a\sgn(z)-\beta\mathbf{1}_{\{|z|>H\}}\sgn(z)\big)\,\mathrm{d}z
	= \int_{-\infty}^{x} b_2(z)\,\mathrm{d}z.
	$$
	The function $\bar{b}_2$ is even. By the Bouleau–Yor formula (see, e.g., \cite[Theorem 78, p.~232]{Prot05}), we have
	$$
	\bar{b}_2(\hat{X}_T)
	= \bar{b}_2(\hat{X}_t)
	+ \int_t^T b_2(\hat{X}_s)\,\mathrm{d}\hat{X}_s
	- \frac{1}{2}\int_{\mathbb{R}} b_2(z)\big(L_T^{\hat{X}}(\mathrm{d}z)-L_t^{\hat{X}}(\mathrm{d}z)\big),
	$$
	which leads directly to the equality in \eqref{abc12}.
	
	Next, we show that $\sgn(\hat P_t) = -\sgn(\hat{X}_t)$.  
	Substituting $\hat{\alpha}$ into the state equation gives
	$$
	\mathrm{d}\hat{X}_t
	= \big(\delta \hat X_t+a\sgn(\hat X_t)-\beta\mathbf{1}_{\{|\hat X_t|>H\}}\sgn(\hat X_t) \big)\mathrm{d}t
	- \sigma\,\mathrm{d}B_t	- \bar\sigma\,\mathrm{d} \bar B_t, 
	\qquad \hat{X}_0 = x.
	$$
	The drift of this SDE is an odd function. Define $\tilde{B}_t := -B_t$, which is a Brownian motion with the same law as $B$. Then the process $(-\hat{X}_t)$ satisfies
	$$
	\mathrm{d}(-\hat{X}_t)
	= \big(-\delta X_t+a\sgn(-\hat X_t)-\beta\mathbf{1}_{\{|\hat X_t|>H\}}\sgn(-\hat X_t) \big)\mathrm{d}t
	-\sigma\,\mathrm{d}(-B_t)	-\bar\sigma\,\mathrm{d} (-\bar B_t).
	$$
	By weak uniqueness, $(-\hat{X}_s)$ and $\hat{X}_s$ have the same distribution for all $s\le \tau$, where $\tau:=\inf\{s\le t:\hat{X}_s=0\}$.

	By weak uniqueness, given the same initial distribution, it holds $(-\hat{X}, \tilde B)$ and $(\hat{X},B)$ have the same law. In particular, $-\hat{X}$ and $\hat{X}$ have the same distribution for all $s\ge \tau$, with $\tau:=\inf\{s\le t:\hat{X}_s=0\}$.
	
	Let us consider the following $\sigma$-algebra $\mathcal{G}_t=\sigma(\hat X_s, t\leq s\leq \tau \wedge T)$. Then we can then express $\hat P_t $ as	(see for example \cite[proof of Proposition 6.2]{BKPMarx})	
	\begin{align}
		\hat P_t
		=&-2\mathbb{E}\big[\Phi^{\hat{\alpha}}_{t,\tau}\mathbb{E}\big[\Phi^{\hat{\alpha}}_{\tau,T}\hat X_T\,\big|\hat X_\tau \big]\mathbbm{1}_{\{\tau\le T\}}\,\big|\hat X_t \big]-2\mathbb{E}\big[\Phi^{\hat{\alpha}}_{t, T}\hat X_T\mathbbm{1}_{\{\tau> T\}}\,\big|\hat X_t \big],
	\end{align}	
	where we have used the Markov (or strong Markov) property together with 
	$\mathcal{G}_\tau \subset \mathcal{G}_t \subset \mathcal{F}_\tau$ to obtain the first term on the right-hand side in the fifth equality. 
	In the sixth equality we use that $\mathcal{G}_\tau = \sigma(\hat X_\tau)$, and in the final equality we use that $\hat X_T$ is $\mathcal{G}_t$-measurable.
	
	Thus, the proof is complete once we show that
	\begin{equation}\label{abc13}
		I_1 := \mathbb{E}\big[\Phi^{\hat{\alpha}}_{\tau,T}\,\hat X_T \,\big|\, X_\tau\big] 
		\mathbbm{1}_{\{\tau \le T\}} = 0 .
	\end{equation}
	
	Indeed, when $\tau > T$, we have $\sgn(\hat{X}_t) = -\sgn(\hat{X}_s)$ for all $t \le s \le T$. Thus, the term inside the expectation either vanishes or has the same sign as $\hat{X}_t$. Consequently, $\sgn(\hat Y_t) = \sgn(\hat{X}_t)$, and $\hat{\alpha}_t$ is an optimal control.
	
	To verify \eqref{abc13}, note that by weak uniqueness, $(\hat{X},B)$ and $(-\hat{X},\tilde{B})$ have the same law under $\mathbb{P}$  provided they have the same initial distribution. Since $\bar{b}_2$, $\big(a\sgn(x)-\beta\mathbf{1}_{\{|x|>H\}}\sgn(x)\big)^2$, and $\big(a\sgn(x)-\beta\mathbf{1}_{\{|x|>H\}}\sgn(x)\big)x$, are even functions, while $a\sgn(z)-\beta\mathbf{1}_{\{|z|>H\}}\sgn(z)$ is odd, we have
	\begin{align*}
		I 
		&= \mathbbm{1}_{\{\tau\le T\}}\mathbb{E}\big[ 
		\exp\big\{
		\delta(T-\tau)
		+ 2\big( 
		\bar{b}_2(-\hat{X}_T) - \bar{b}_2(-\hat{X}_\tau) \notag \\
		&	+ \delta\int_\tau^T \big(a\sgn(-\hat X_s)-\beta\mathbf{1}_{\{|\hat X_s|>H\}}\sgn(-\hat X_s)\big)(-\hat X_s)\,\mathrm{d}s \notag \\
		&
		-\int_\tau^T\big(a\sgn(-\hat X_s)-\beta\mathbf{1}_{\{|\hat X_s|>H\}}\sgn(-\hat X_s)\big)^2\,\mathrm{d}s\notag\\
		&	- \int_\tau^T \big(a\sgn(-\hat X_s)-\beta\mathbf{1}_{\{|\hat X_s|>H\}}\sgn(-\hat X_s)\big)\sigma\,\mathrm{d}(-B_s)\notag\\
		&	- \int_\tau^T \big(a\sgn(-\hat X_s)-\beta\mathbf{1}_{\{|\hat X_s|>H\}}\sgn(-\hat X_s)\big)\bar \sigma\,\mathrm{d}(-\bar B_s)
		\big)
		\big\}\times (-\hat{X}_T) \,\big|\, X_\tau \big] \\
		&= -\mathbbm{1}_{\{\tau\le T\}}\mathbb{E}\big[ 
		\Phi^{\hat{\alpha}}_{t,T} \hat{X}_T \,\big|\,X_\tau \big]
		= -I,
	\end{align*}
	which implies $I=0$. The proof is complete.
\end{proof}
\subsection{Numerical Results}
In this section, we present a numerical illustration of the impact of jump characteristics on the 
performance of the controlled system. We suppose that $\nu$  is the L\'evy measure of a compound Poisson process with
$\bar{\sigma} = \sqrt{\lambda\,\mathbb{E}[\xi^2]} = \sqrt{\lambda\,(\tau^2 + \mu^2)}$, 
where $\lambda$ denotes the jump intensity and $\xi$ the jump size with mean $\mu$ and standard deviation $\tau$. Set $x= 0, \delta = 0.05, \beta = 1.0, H = 1.0,\sigma = 0.2, \mu= 0.0$

\begin{figure}[htbp]
	
	\centering
	\includegraphics[width=0.47\textwidth]{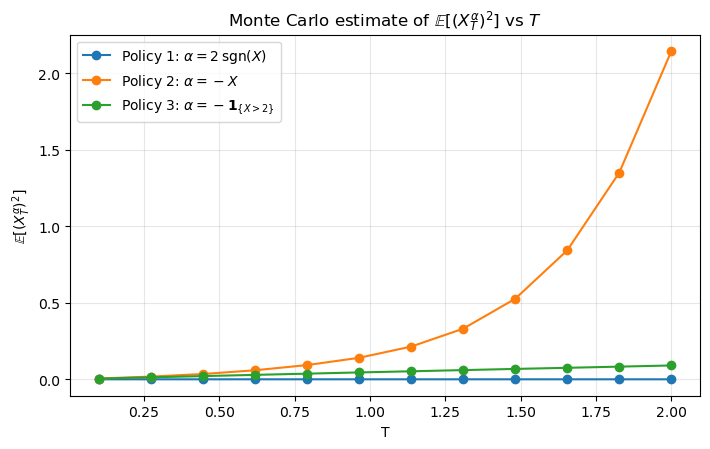}
	\caption{Dependence of $\mathbb{E}[(X_T^{\alpha})^2]$ on $T$ for different $\alpha$.}
	\label{fig:alpha}
\end{figure}

\begin{figure}[htbp]
	\centering
	\includegraphics[width=0.47\textwidth]{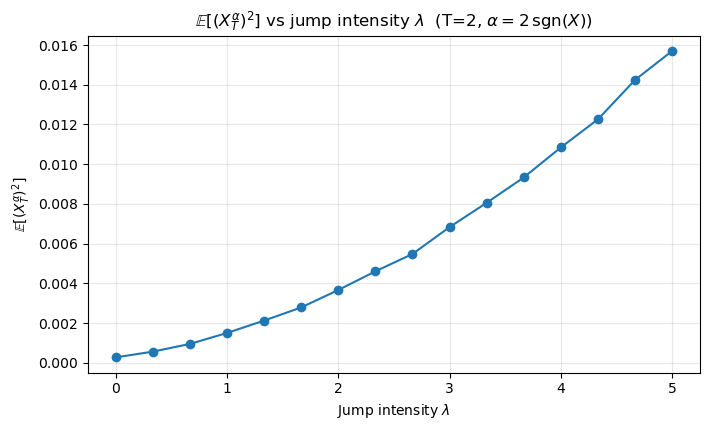}
	\includegraphics[width=0.47\textwidth]{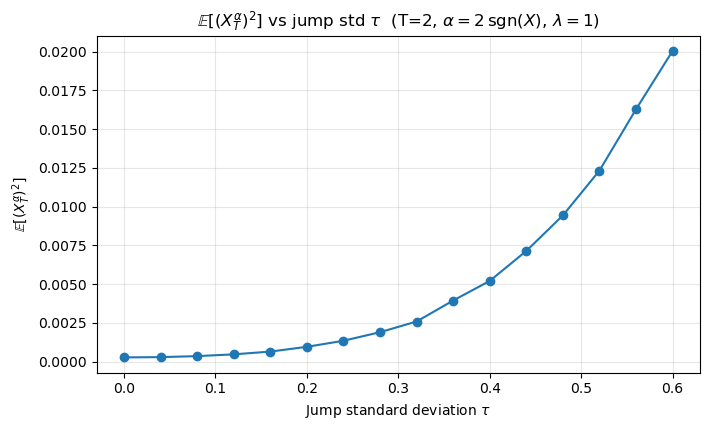}
	\caption{Dependence of $\mathbb{E}[(X_T^{\alpha})^2]$ on (left) jump intensity $\lambda$ and (right) 
		jump-size standard deviation $\tau$ for $T=2$ under the feedback control $\alpha_t = 2\,\mathrm{sgn}(X_t)$.}
	\label{fig:lambda_tau_dependence}
\end{figure}

\paragraph{Interpretation of Results.}
Figure~\ref{fig:alpha} compares the terminal second moment\\ $\mathbb{E}[(X_T^{\alpha})^2]$ under three feedback controls: $\alpha_t=-X_t$, $\alpha_t=-\mathbf{1}_{\{X_t>2\}}$, and the sign-based rule $\hat{\alpha}_t=2\,\mathrm{sgn}(X_t)$ (optimal). The latter yields the smallest second moment, reflecting its stronger stabilising effect. It reacts symmetrically to deviations from the target, while the linear feedback provides weaker correction and the threshold policy performs worst due to limited intervention.

Figure~\ref{fig:lambda_tau_dependence} shows that $\mathbb{E}[(X_T^{\alpha})^2]$ increases with both the claim intensity $\lambda$ and volatility $\tau$. Higher jump frequency or variability amplifies reserve fluctuations despite optimal control. This highlights the trade-off between control and jump risk, underscoring the need to account for claim statistics in control design.

\section{Concluding remarks}

In this paper, we have established a necessary and sufficient stochastic maximum principle for jump--diffusion systems with a piecewise Lipschitz drift coefficient. The analysis begins with an explicit representation of the first variation process, derived by combining It\^o’s formula for jump--diffusions with standard stochastic integral estimates---namely the Burkholder--Davis--Gundy and Kunita inequalities---and the fact that the law of the SDE solution admits a density with respect to the Lebesgue measure.

To handle the discontinuity in the drift, we constructed a sequence of approximating control problems with smooth coefficients. Ekeland’s variational principle was then applied to obtain a sequence of adjoint processes, allowing us to derive the maximum principle in the limit. The framework accommodates drift functions of the form
\[
b(x,a) = b_1(x,a) + b_2(x) + b_3(x),
\]
where the discontinuity arises from \( b_2 \).

The present setting, however, does not encompass the case in which the insurer seeks to optimise a parameter \( \beta > 0 \) representing the magnitude of a fixed drain or injection in the model \eqref{eqmot1}--\eqref{eqmot2}. In that context, the drift takes the form
\[
b(x,a) = b_1(x) + b_2(x)b_3(a),
\]
where \( b_2 \) is piecewise Lipschitz (and expressible as the difference of two monotone functions), and \( b_1 \) is continuously differentiable with bounded derivative and linear growth.

\appendix

\section{Proof of preliminary results on SDEs with jumps and irregular drifts}\label{appen}
\begin{proof}[Proof of Lemma \ref{prop:convXn0}]
	To prove the Lemma, we proceed as in \cite{PrzSzo21, PrSX21}. We consider the function $G$ defined by:
	\begin{align}\label{eqdefG1}
		G(x)=x+\sum_{k=1}^m\alpha_k\phi(\frac{x-\xi_k}{c})(x-\xi_k)|x-\xi_k|,
	\end{align}
	with
	\begin{equation*}
		\phi(u)=\Big\{\begin{array}{lll}
			(1+u)^3(1-u)^3 & \text{ if } |u|\leq 1,\\
			0 & \text{ otherwise, }
		\end{array}
	\end{equation*} 
	$$
	\mathbb{R}\backslash \{0\} \ni \alpha_k=\frac{b(\xi_{k^-})-b(\xi_{k^+})}{2},\,\,\,k\in\{1,\ldots,m\}
	$$
	\begin{align}\label{eqc1}
		(0,\infty)\ni c<\min\big(\min_{1\leq k\leq m}\frac{1}{6|\alpha_k|},\min_{1\leq k\leq m-1}\frac{\xi_{k+1}-\xi_{k}}{2}\big).
	\end{align}
	
	It can be shown that one may choose $c$ such that $G'(x)> 0$ for all $x\in \mathbb{R}$. Hence $G$ admits a global inverse $G^{-1} : \mathbb{R}\mapsto\mathbb{R}$. Both $G$ and its inverse $G^{-1}$ are Lipschitz and $G \in C^1_b$ i.e., $G$ is continuously differentiable with bounded derivative. In particular, $G$ is Lipschitz since it is piecewise Lipschitz and continuous (see for example \cite{LeoSzo17}).
	
	Let $Y=G(X)$ and $Y^n=G(X^n)$. The processes $Y_t$ and $Y^n_t$ then satisfy, respectively, the following SDEs.
	\begin{equation}	\label{eq:SDErvmainZ1}
		\mathrm{d}Y^x_t = \bar b(Y^x_t)\mathrm{d}t + \bar \sigma(Y^x_t) \diffns B_t +\int_{\mathbb{R}\backslash \{0\}}\bar \gamma (Y^x_t ,z) N(\mathrm{d}z,\diffns t), t\in[0,T] , Y^x_0=G(x) ,
	\end{equation}
	and 
	\begin{equation}	\label{eq:SDErvmainZn1}
		\begin{cases}
			\mathrm{d}Y^{n,x}_t = \bar b_n(Y^{n,x}_t)\mathrm{d}t + \bar \sigma(Y^{n,x}_t) \diffns B_t +\int_{\mathbb{R}\backslash \{0\}}\bar \gamma (Y^{n,x}_t ,z) N(\mathrm{d}z,\diffns t), t\in[0,T]\\
			Y^{n,x}_0=G(x) ,
		\end{cases}
	\end{equation}
	where 
	\begin{equation}\label{eqtrans1}
		\begin{cases}
			\bar b(y) =G'(G^{-1} (y)) b(G^{-1} (y)) +\frac{1}{2} G''(G^{-1} (y)),\\
			\bar b_n(y) =G'(G^{-1} (y)) b_n(G^{-1} (y)) +\frac{1}{2} G''(G^{-1} (y)),	\qquad	\bar\sigma(y)=G'(G^{-1} (y)),\\
			\bar\gamma(y,z) =G(G^{-1} (y)+\gamma (G^{-1} (y),z))-G(G^{-1} (y))=G(G^{-1} (y)+\gamma (z))-y.
		\end{cases}
	\end{equation}
	Both $\bar b_n$ and $\bar b$ are of at most linear growth, with $\bar b_n(x)\rightarrow \bar b(x)$ for $\diffns x$-a.e. $x$. Moreover, \cite{LeoSzo17} shows that $\bar b , \bar \sigma$ and, $\bar \gamma$ are Lipschitz continuous and of at most linear growth; hence $\bar  b_n$ is also of at most linear growth and Lipschitz, with a Lipschitz constant independent of $n$. It then follows from \cite[Theorem 3]{Pro07} that the SDE \eqref{eq:SDErvmainZ1} (respectively, \eqref{eq:SDErvmainZn1}) admits a unique global strong solution. In addition the law of $Y_t$ has an absolute continuous density with respect to the Lebesgues measure (see Remark \ref{remdenY1}).

	Moreover, one has:
	\begin{align}\label{convSDEY1}
		\mathbb{E}\big[\sup_{0\leq s\leq t}	|Y^x_t-Y^{n,x}_t|^2\big] \rightarrow 0 \text{ as }  n \text{ goes to } \infty
	\end{align}
	Indeed, using H\"older, Burkholder-Davis-Gundy, Kunita's and Gronwal's inequlities, we have
	\begin{align}
		&\mathbb{E}\big[\sup_{0\leq t\leq T}	|Y^x_t-Y^{n,x}_t|^2\big]\notag\\
		\leq & 4 \mathbb{E}\big[\sup_{0\leq t\leq T} \big|\int_0^t\big\{\bar b(Y^x_s)-\bar b_n(Y^{n,x}_s)\big\}\mathrm{d}s\big|^2\big]\notag\\
		& +\mathbb{E}\big[\sup_{0\leq t\leq T}\big| \int_0^t\big\{\bar \sigma(Y^x_s) -\bar \sigma(Y^{n,x}_s) \big\}\diffns B_s\big|^2\big] \notag\\
		&+\mathbb{E}\big[\sup_{0\leq t\leq T}\big|\int_0^t\int_{\mathbb{R}\backslash \{0\}}\big\{\bar \gamma (Y^x_s ,z)-\bar \gamma (Y^{n,x}_s ,z)\big\} N(\mathrm{d}z,\diffns s)\big|^2\big]\notag\\
		\leq &C \mathbb{E}\big[ \int_0^T \big|\bar b(Y^x_s)-\bar b_n(Y^x_s)+\bar b_n(Y^x_s)-\bar b_n(Y^{n,x}_s)\big|^2\mathrm{d}s\big]\notag\\
		& +CM\mathbb{E}\big[ \int_0^T\big|Y^x_s -Y^{n,x}_s\big|^2\diffns s\big]+CM\mathbb{E}\big[ \int_0^T\int_{\mathbb{R}\backslash \{0\}}\big|Y^x_s-Y^{n,x}_s \big|^2\nu(\mathrm{d}z)\diffns s\big]\notag\\
		\leq &C \int_0^T \mathbb{E}\big[ \big|\bar b(Y^x_s)-\bar b_n(Y^{x}_s)\big|^2\big]\mathrm{d}s,
	\end{align} 
	where $C$ is a constant that may change from one line to the other and we have use the uniform Lipschitz continuity of $b_n$.
	
	Next since the law of $Y_s$ admits an absolute continuous density with respect to the Lebesgue measure, we have 
	$$
	\int_0^T \mathbb{E}\big[ \big|\bar b(Y^x_s)-\bar b_n(Y^{x}_s)\big|^2\big]\mathrm{d}s=\int_0^T\int_{\mathbb{R}}\big|\bar b(y)-\bar b_n(y)\big|^2\varphi(y)\diffns y\mathrm{d}s
	$$
	Moreover, the linear growth assumption ensures that 
	$$
	\mathbb{E}\big[ \big|\bar b(Y^x_s)-\bar b_n(Y^{x}_s)\big|^2\Big]\leq M(1+\mathbb{E}\big[\big|Y^{x}_s\big|^2\big] )\leq M_T(1+|x|^2)
	$$
	Thus the dominated convergence result yields \eqref{convSDEY1}. Finally 
	\begin{align}\label{convSDEX1}
		\mathbb{E}\big[\sup_{0\leq s\leq t}	|X^x_t-X^{n,x}_t|^2\big]=&	\mathbb{E}\big[\sup_{0\leq s\leq t}	|G^{-1}(Y^x_t)-G^{-1}(Y^{n,x}_t)|^2\big]\notag\\ \leq & M\mathbb{E}\big[\sup_{0\leq s\leq t}	|Y^x_t-Y^{n,x}_t|^2\big] \rightarrow 0 \text{ as }  n \text{ goes to } \infty.
	\end{align}
\end{proof}
\begin{remark}\label{remdenY1}
	Note that $c$ in \eqref{eqc1} can be chosen small enough to insure uniform ellipticity of the diffusion coefficient $\bar \sigma$. More precisely on can choose $c$ such that
	\begin{equation}\label{eq:G_assump_simul}
		0<\underline g:=\inf_{x\in\R}G'(x)\le \sup_{x\in\R}G'(x)=:\overline g<\infty,
		\qquad
		G''\in L^\infty(\R),
	\end{equation}
	In addition, $1+\bar \gamma_y\neq 0$. As a consequence the law of the solution $Y_t$ (and thus $X_t$) to the SDE \eqref{eq:SDErvmainZ1} has a density with respect to the Lebesgue measure (see for example \cite[Theorem 11.4.4]{NuaNua18}).
\end{remark}

\begin{proof}[Proof of Theorem \ref{thmmainresder1}]
	It is easy to see that the first variation process exist. Indeed since the coefficient given in the SDE of $Y^{x}_t$ are Lipschitz continuous, it follows from \cite[Page 96 line before Sect. 3.4]{KunitaB19} that $Y^{x}_t$ is differentiable with respect to the initial condition and we have:
	\begin{align}	\label{eq:SDErvmainderZ1}
		\frac{\partial Y^x_t}{\partial x} =&G'(x)+ \int_0^t\bar b'(Y^x_s)\frac{\partial Y^x_s}{\partial x}\mathrm{d}s + \int_0^t\bar \sigma'(Y^x_s) \frac{\partial Y^x_s}{\partial x}\diffns B_s \notag \\
		&+\int_0^t\int_{\mathbb{R}\backslash \{0\}}\bar \gamma' (Y^x_s ,z)\frac{\partial Y^x_s}{\partial x} N(\mathrm{d}z,\diffns s).
	\end{align}
	Since $\bar b',\bar \sigma'$ and $\bar \gamma'$ are bounded, it follows from \cite[Theorem 3.3.1]{KunitaB19} that for all $p\geq 2$ $\mathbb{E}\big[\big|\frac{\partial Y^x_t}{\partial x}\big|^p\big]\leq C$.
	In addition, using the boundedness of $\bar b',\bar \sigma', \bar b_n'$ and $\bar \gamma'$ and the fact that $b'_n(x)$ converges to $b'(x),\,\diffns x$-a.e. one can show that 
	
	\begin{align}\label{eqstronYde1}
		\frac{\partial Y^{n,x}_t}{\partial x} \rightarrow \frac{\partial Y^x_t}{\partial x} \text{ strongly in } L^2(\Omega)
	\end{align}

	Indeed
	\begin{align*}
		&\big|\frac{\partial Y^{n,x}_t}{\partial x} -\frac{\partial Y^x_t}{\partial x}\big|^2 \\
		\leq&4 \big|\int_0^t\big(\bar b_n'(Y^{n,x}_s)\frac{\partial Y^{n,x}_s}{\partial x}-\bar b'(Y^x_s)\frac{\partial Y^x_s}{\partial x}\big)\mathrm{d}s\big|^2 \\
		&+ 4\big|\int_0^t\big(\bar \sigma'(Y^{n,x}_s) \frac{\partial Y^{n,x}_s}{\partial x}-\sigma'(Y^x_s) \frac{\partial Y^x_s}{\partial x}\big)\diffns B_s\big|^2\\ &+4\big|\int_0^t\int_{\mathbb{R}\backslash \{0\}}\big(\bar \gamma' (Y^{n,x}_s ,z)\frac{\partial Y^{n,x}_s}{\partial x}-\bar \gamma' (Y^x_s ,z)\frac{\partial Y^x_s}{\partial x}\big) N(\mathrm{d}z,\diffns s)\big|^2\\
		\leq&8\big( \big|\int_0^t\big(\bar b_n'(Y^{n,x}_s)-\bar b'(Y^{x}_s)\big)\frac{\partial Y^{n,x}_s}{\partial x}\mathrm{d}s\big|^2 +\big|\int_0^t\bar b'(Y^x_s)\big(\frac{\partial Y^{n,x}_s}{\partial x}-\frac{\partial Y^x_s}{\partial x}\big)\mathrm{d}s\big|^2 \\
		&+ \big|\int_0^t\big(\bar \sigma'(Y^{n,x}_s)-\bar \sigma'(Y^{x}_s)\big)\frac{\partial Y^{n,x}_s}{\partial x}\diffns B_s\big|^2+ \big|\int_0^t \bar \sigma'(Y^x_s)\big(\frac{\partial Y^{n,x}_s}{\partial x}-\frac{\partial Y^x_s}{\partial x}\big) \diffns B_s\big|^2\\ &+\big|\int_0^t\int_{\mathbb{R}\backslash \{0\}}\big(\bar \gamma'(Y^{n,x}_s,z)-\bar \gamma'(Y^{x}_s,z)\big)\frac{\partial Y^{n,x}_s}{\partial x} N(\mathrm{d}z,\diffns s)\big|^2\\
		&+\big|\int_0^t\int_{\mathbb{R}\backslash \{0\}} \bar \gamma'(Y^x_s,z)\big(\frac{\partial Y^{n,x}_s}{\partial x}-\frac{\partial Y^x_s}{\partial x}\big) N(\mathrm{d}z,\diffns s)\big|^2\big)\\
		\leq&8\big( \int_0^t\big|\bar b_n'(Y^{n,x}_s)-\bar b'(Y^{x}_s)\big|^2\mathrm{d}s\int_0^t\big|\frac{\partial Y^{n,x}_s}{\partial x}\big|^2\mathrm{d}s +M^2\int_0^t \big|\frac{\partial Y^{n,x}_s}{\partial x}-\frac{\partial Y^x_s}{\partial x}\big|^2\mathrm{d}s \\
		&+ \big|\int_0^t\big(\bar \sigma'(Y^{n,x}_s)-\bar \sigma'(Y^{x}_s)\big)\frac{\partial Y^{n,x}_s}{\partial x}\diffns B_s\big|^2+ \big|\int_0^t \bar \sigma'(Y^x_s)\big(\frac{\partial Y^{n,x}_s}{\partial x}-\frac{\partial Y^x_s}{\partial x}\big) \diffns B_s\big|^2\\ &+\big|\int_0^t\int_{\mathbb{R}\backslash \{0\}}\big(\bar \gamma'(Y^{n,x}_s,z)-\bar \gamma'(Y^{x}_s,z)\big)\frac{\partial Y^{n,x}_s}{\partial x} N(\mathrm{d}z,\diffns s)\big|^2\\
		&+\big|\int_0^t\int_{\mathbb{R}\backslash \{0\}} \bar \gamma'(Y^x_s,z)\big(\frac{\partial Y^{n,x}_s}{\partial x}-\frac{\partial Y^x_s}{\partial x}\big) N(\mathrm{d}z,\diffns s)\big|^2\big).
	\end{align*}
	Taking the expectation on both sides, using It\^o's isometry, H\"older inequality repeatedly and Gronwall's lemma gives
	\begin{align*}
		&\mathbb{E}\big[	\big|\frac{\partial Y^{n,x}_t}{\partial x} -\frac{\partial Y^x_t}{\partial x}\big|^2\big]\\
		\leq&8\big( C(T)C\mathbb{E}\big[	\int_0^t\big|\bar b_n'(Y^{n,x}_s)-\bar b'(Y^{x}_s)\big|^4\mathrm{d}s\big]^{1/2} +M^2\int_0^t \mathbb{E}\big[\big|\frac{\partial Y^{n,x}_s}{\partial x}-\frac{\partial Y^x_s}{\partial x}\big|^2\big]\mathrm{d}s \\
		&+ C(T)\int_0^t\mathbb{E}\big[\big|\bar \sigma'(Y^{n,x}_s)-\bar \sigma'(Y^{x}_s)\big|^4\big]^{1/2}\diffns s+M^2 \int_0^t  \mathbb{E}\big[\big|\frac{\partial Y^{n,x}_s}{\partial x}-\frac{\partial Y^x_s}{\partial x}\big|^2 \big]\diffns s\\ &+C(T)\int_0^t\int_{\mathbb{R}\backslash \{0\}}\mathbb{E}\big[\big|\bar \gamma'(Y^{n,x}_s,z)-\bar \gamma'(Y^{x}_s,z)\big|^4\big]^{1/2}\nu(\mathrm{d}z)\diffns s\\
		&+M^2\int_0^t\int_{\mathbb{R}\backslash \{0\}}\mathbb{E}\big[ \big|\frac{\partial Y^{n,x}_s}{\partial x}-\frac{\partial Y^x_s}{\partial x}\big|^2\big]\nu(\mathrm{d}z)\diffns s\big)\\
		\leq&8C(T)e^{C(T,M,\nu)}\big( \mathbb{E}\big[	\int_0^T\big|\bar b_n'(Y^{n,x}_s)-\bar b'(Y^{x}_s)\big|^4\mathrm{d}s\big]^{1/2}\\
		& + \int_0^T\mathbb{E}\big[\big|\bar \sigma'(Y^{n,x}_s)-\bar \sigma'(Y^{x}_s)\big|^2\big]^{1/2}\diffns s\\
		&+\int_0^T\int_{\mathbb{R}\backslash \{0\}}\mathbb{E}\big[\big|\bar \gamma'(Y^{n,x}_s,z)-\bar \gamma'(Y^{x}_s,z)\big|^2\big]^{1/2}\nu(\mathrm{d}z)\diffns s\big).
	\end{align*}
	Let us now show that the first term on the right side converges to $0$. 
	\begin{align*}
		\int_0^T\mathbb{E}\big[	\big|\bar b_n'(Y^{n,x}_s)-\bar b'(Y^{x}_s)\big|^4\big]\diffns s\leq &\int_0^T\mathbb{E}\big[	\big|\bar b_n'(Y^{n,x}_s)-\bar b'(Y^{n,x}_s)\big|^4\big]\diffns s\\
		&+\int_0^T\mathbb{E}\big[	\big|\bar b'(Y^{n,x}_s)-\bar b'(Y^{x}_s)\big|^4\big]\diffns s\\
		=&\int_0^T(I_{1,n}(t)+I_{2,n}(t))\diffns t
	\end{align*}
	Uniform linear growth of the coefficients of the SDE \eqref{eq:SDErvmainZn1}, yield by \cite[Theorem 3.3.1]{KunitaB19}  that (L3) in Lemma \ref{lem:time_integrated_bn_full} is satisfied. Combining Theorem \ref{thm:global_bound_clean} and Lemma \ref{lem:time_integrated_bn_full} gives the convergence of $\int_0^TI_{1,n}(t)\diffns t$

	To prove that $\lim\limits_{n\rightarrow0}\int_0^TI_{1,n}(t)\diffns t=0$, let $k\in \mathbb{N}$ and consider the function $\bar b_k'$ as in as in Lemma \ref{lem:barb_bn_simultaneous} 
	\begin{align}\label{eqI2nfory}
		\int_0^TI_{1,n}(t)\diffns t \leq &C\big(\int_0^T\mathbb{E}\big[	\big|\bar b'(Y^{n,x}_s)-\bar b_k'(Y^{n,x}_s)\big|^4\big]\diffns t +\int_0^T\mathbb{E}\big[	\big|\bar b_k'(Y^{n,x}_s)-\bar b_k'(Y^{x}_s)\big|^4\big]\diffns t \notag\\
		&+\int_0^T\mathbb{E}\big[	\big|\bar b_k'(Y^{x}_s)-\bar b'(Y^{x}_s)\big|^4\big]\diffns t =C\int_0^T(I^1_{2,n}+I^2_{2,n}+I^3_{2,n})\diffns t .
	\end{align}
	By arguments similar to those used in the proof of the convergence of $\int_0^TI_{1,n}(t)\diffns t$, we obtain that $\int_0^TI^1_{2,n}\diffns t$ and $\int_0^TI^3_{2,n}\diffns t$ converges to zero. The convergence of  $\int_0^TI^2_{2,n}\diffns t$  follows from the continuity of $b'_k$, the fact that $Y^{n,x}_s \rightarrow Y^{x}_s$ uniformly in $L^2$ (and hence in probability), and the dominated convergence theorem.

	Moreover, since $G^{-1}$ is Lipschitz continuous, it follows that $X^{x}_t=G^{-1}(Y^x_t)$ is weakly differentiable with respect to the initial condition and we have 
	$$
	\frac{\partial X^{n,x}_t}{\partial x}=(G^{-1})'(Y^{n,x}_t)\frac{\partial Y^{n,x}_t}{\partial x} \rightarrow \frac{\partial X^{x}_t}{\partial x}=(G^{-1})'(Y^{x}_t)\frac{\partial Y^{n,x}_t}{\partial x} \text{ strongly in } L^2(\Omega).
	$$ 
	This follows as in the proof of the convergence of $I_{2,n}$ in \eqref{eqI2nfory} using the fact that $	\frac{\partial Y^{n,x}_t}{\partial x} \rightarrow \frac{\partial Y^x_t}{\partial x} \text{ strongly in } L^2(\Omega)$, $ \frac{\partial Y^x_t}{\partial x}$ has all moments and that $(G^{-1})'$ is bounded. 

	Now, let us consider $X^{n,x}_t$, the solution to the SDE \eqref{eq:SDErvmain1n}. As mentioned above, $X^{n,x}_t$ is differentiable with respect to the initial condition. Next we show that its derivative satisfies a linear stochastic differential equation given by
	\begin{align*}	
		\frac{\partial X^{x,n}_t}{\partial x} =& 1 + \int_0^t\big( b'_{1}(X^{x,n}_u)+b'_{2,n}(X^{x,n}_u)\big)\frac{\partial X^{x,n}_u}{\partial x}\mathrm{d}u\\
		=&\exp\big\{\int_0^t \big( b'_{1}(X^{x,n}_u)+b'_{2,n}(X^{x,n}_u)\big)\mathrm{d}u\big\}.
	\end{align*}
	
	Now set 	$
	\tilde b_{2,n}(x)=\int_{-\infty}^x b_{2,n}(y)\diffns y.
	$ Then by It\^o's formula for L\'evy processes, one has:
	\begin{align*}
		&\tilde b_{2,n}(X^{n,x}_t)- \tilde b_{2,n}(x)\\
		=&	\int_0^tb_1(X^{n,x}_u)b_2(X^{n,x}_u)  \mathrm{d}u+	\int_0^tb^2_{2,n}(X^{n,x}_u) \mathrm{d}u+\int_0^tb_{2,n}(X^{n,x}_u) \mathrm{d}B_u	\notag\\
		&+\frac{1}{2}\int_0^tb'_{2,n}(X^{n,x}_u) \mathrm{d}u+\int_0^t\int_{\mathbb{R}\backslash \{0\}}\big\{ \tilde b_{2,n}(X^{n,x}_u+\gamma(z))-\tilde b_{2,n}(X^{n,x}_u)\big\} N(\mathrm{d}z,\mathrm{d}u)
	\end{align*}
	Substituting this above gives
	\begin{align*}	
		\frac{\partial X^{x,n}_t}{\partial x}=&\exp\big\{\int_0^t b'_{1}(X^{x,n}_u)\mathrm{d}u+2\big(\tilde b_{2,n}(X^{n,x}_t)-\tilde b_{2,n}(x)- \int_0^tb^2_{2,n}(X^{n,x}_u) \mathrm{d}u\\
		&-	\int_0^tb_1(X^{n,x}_u)b_2(X^{n,x}_u)  \mathrm{d}u-\int_0^tb_{2,n}(X^{n,x}_u) \mathrm{d}B_u\\
		&-\int_0^t\int_{\mathbb{R}\backslash \{0\}}\big\{ \tilde b_{2,n}(X^{n,x}_u+\gamma(z))-\tilde b_{2,n}(X^{n,x}_u)\big\} N(\mathrm{d}z,\mathrm{d}u)\big)\big\}
	\end{align*}
	
	\textbf{Claim 1} We claim that the right side of the above converges strongly in $L^2$ to the right side of \eqref{eqfirstvarX1}.
	
	\begin{proof}[Proof of \textbf{Claim 1}]
		\begin{align*}
			&\mathbb{E}	\Big[\Big|\exp\big\{\int_0^t b'_{1}(X^{x,n}_u)\mathrm{d}u+2\big(\tilde b_{2,n}(X^{n,x}_t)-\tilde b_{2,n}(x)-\int_0^tb_{2,n}(X^{n,x}_u) \mathrm{d}B_u\mathrm{d}u \notag\\
			&-2\int_0^tb_{2,n}^2(X^x_u) -\int_0^tb_1(X^{n,x}_u)b_2(X^{n,x}_u)  \mathrm{d}u\\
			&-\int_0^t\int_{\mathbb{R}\backslash\{0\}}\big\{ \tilde b_{2,n}(X^{n,x}_u+\gamma(z))-\tilde b_{2,n}(X^{n,x}_u)\big\} N(\mathrm{d}z,\mathrm{d}u)\big)\big\}\notag\\
			&-\exp\big\{\int_0^tb_1'(X^x_u) \mathrm{d}u+2\big(\tilde b_2(X^x_t)- \tilde b_2(x)-	\int_0^tb_2^2(X^x_u) \mathrm{d}u-\int_0^tb_2(X^x_u) \mathrm{d}B_u	\notag\\
			&-\int_0^tb_1(X^{x}_u)b_2(X^{x}_u)  \mathrm{d}u-\int_0^t\int_{\mathbb{R}\backslash \{0\}}\big\{ \tilde b_2(X^x_u+\gamma(z))-\tilde b_2(X^x_u)\big\} N(\mathrm{d}z,\mathrm{d}u)\big)\big\}\Big|^2\Big]\notag\\
			\leq&\mathbb{E}	\Big[\Big|\int_0^t \big(b'_{1}(X^{x,n}_u)-b_1'(X^x_u)\big)\mathrm{d}u-2\big(\tilde b_{2,n}(x)-\tilde b_2(x)\big)+2\big( \tilde b_{2,n}(X^{n,x}_t)-\tilde b_2(X^x_t)\big)\notag\\
			&	
			-2\int_0^t\big(b_{2,n}(X^{n,x}_u)-b_{2}(X^{x}_u)\big) \mathrm{d}B_u
			-2\int_0^t\big(b^2_{2,n}(X^{n,x}_u)-b_2^2(X^x_u)\big) \mathrm{d}u
			\notag\\
			&-2\int_0^t\big(b_1(X^{n,x}_u)b_2(X^{n,x}_u) -b_1(X^{x}_u)b_2(X^{x}_u) \big) \mathrm{d}u-2\int_0^t\int_{\mathbb{R}\backslash \{0\}}\big\{ \tilde b_{2,n}(X^{n,x}_u+\gamma(z))\notag\\
			&-\tilde b_{2,n}(X^{n,x}_u)-\tilde b_{2}(X^{x}_u+\gamma(z))+\tilde b_{2}(X^{x}_u)\big\} N(\mathrm{d}z,\mathrm{d}u)\notag\\
			&\times \Big|\exp\big\{\int_0^t b'_{1}(X^{x,n}_u)\mathrm{d}u+2\big(\tilde b_{2,n}(X^{n,x}_t)-b_{2,n}(x)-\int_0^tb_{2,n}(X^{n,x}_u) \mathrm{d}B_u\notag\\
			&-2\int_0^tb_{2,n}^2(X^x_u) \mathrm{d}u -\int_0^tb_1(X^{n,x}_u)b_2(X^{n,x}_u)  \mathrm{d}u\\
			&-\int_0^t\int_{\mathbb{R}\backslash\{0\}}\big\{ \tilde b_{2,n}(X^{n,x}_u+\gamma(z))-\tilde b_{2,n}(X^{n,x}_u)\big\}N(\mathrm{d}z,\mathrm{d}u)\big)\big\}\notag\\
			&+\exp\big\{\int_0^tb_1'(X^x_u) \mathrm{d}u+2\big(\tilde b_2(X^x_t)- \tilde b_2(x)-	\int_0^tb_2^2(X^x_u) \mathrm{d}u-\int_0^tb_2(X^x_u) \mathrm{d}B_u	\notag\\
			&-\int_0^tb_1(X^{x}_u)b_2(X^{x}_u)  \mathrm{d}u-\int_0^t\int_{\mathbb{R}\backslash \{0\}}\big\{ \tilde b_2(X^x_u+\gamma(z))-\tilde b_2(X^x_u)\big\} N(\mathrm{d}z,\mathrm{d}u)\big)\big\}\Big|^2\Big]\notag\\
			=&\mathbb{E}\big[\mathcal{I}_{1,n}+\mathcal{I}_{2,n}+\mathcal{I}_{3,n}+\mathcal{I}_{4,n}+\mathcal{I}_{5,n}+\mathcal{I}_{6,n}+\mathcal{I}_{7,n}|^2|\mathcal{j}_{1,n}+\mathcal{J}_2|^2\big]\notag\\
			\leq&C\Big(\mathbb{E}\big[|\mathcal{I}^4_{1,n}\big]^{1/2}+\mathbb{E}\big[|\mathcal{I}_{2,n}|^4\big]^{1/2}+\mathbb{E}\big[|\mathcal{I}_{3,n}|^4\big]^{1/2}+\mathbb{E}\big[|\mathcal{I}_{4,n}|^4\big]^{1/2}+\mathbb{E}\big[|\mathcal{I}_{5,n}|^4\big]^{1/2}\\
			&+\mathbb{E}\big[|\mathcal{I}_{6,n}|^4\big]^{1/2}\big]^{1/2}+\mathbb{E}\big[|\mathcal{I}_{7,n}|^4\big]^{1/2}\big]^{1/2}\Big)\Big(\mathbb{E}\big[|\mathcal{J}_{1,n}|^4\big]^{1/2}+\mathbb{E}\big[|\mathcal{J}_2|^4\big]^{1/2}\Big).
		\end{align*}
		The convergence follows if we can show that the first seven terms converge to $0$ whereas the last two terms are uniformly bounded.
		Let us for example look at the term $\mathbb{E}\big[\mathcal{I}_{4,n}|^4\big]$. Using Burkholder-Davis-Gundy and H\"older inequalities, we obtain
		\begin{align*}
			\mathbb{E}\big[|\mathcal{I}_{4,n}|^4\big]=&\mathbb{E}\big[\big|\int_0^t\big(b_{2,n}(X^{n,x}_u)-b_{2}(X^{x}_u)\big) \mathrm{d}B_u\big|^4\big]\notag\\
			\leq& C \mathbb{E}\big[\int_0^t\big|b_{2,n}(X^{n,x}_u)-b_{2}(X^{x}_u)\big|^4 \mathrm{d} u\big]
		\end{align*}
		Observe that $X^{n,x}_t=G^{-1}(Y^{n,x}_t)$ with $G^{-1}$ at most linear growth and $G'$ uniformly bounded. Thus assumptions of Lemma \ref{lem:time_integrated_bn_full} are satisfied and therefore, $\mathbb{E}\big[|\mathcal{I}_{4,n}|^4\big]$ converges to $0$. As for the term $I_{7,n}$, we have by using the linear growth of $\tilde b_{2,n}, \tilde b_2$ and \cite[Proposition 2.6.1]{KunitaB19} (see also \cite[Theorem 4.4.23]{Apple2004}).
		\begin{align*}
			&\mathbb{E}\big[|\mathcal{I}_{7,n}|^4\big]\\
			=&\mathbb{E}\big[\big|\int_0^t\int_{\mathbb{R}\backslash \{0\}}\big( \tilde b_{2,n}(X^{n,x}_u+\gamma(z))-\tilde b_{2,n}(X^{n,x}_u)-\tilde b_{2}(X^{x}_u+\gamma(z))\\
			&+\tilde b_{2}(X^{x}_u)\big) N(\mathrm{d}z,\mathrm{d}u)\big|^4\big]\notag\\
			\leq &\mathbb{E}\big[\big|\int_0^t\int_{\mathbb{R}\backslash \{0\}}\big( \tilde b_{2,n}(X^{n,x}_u+\gamma(z))-\tilde b_{2,n}(X^{n,x}_u)-\tilde b_{2}(X^{x}_u+\gamma(z))\\
			&+\tilde b_{2}(X^{x}_u)\big) N(\mathrm{d}z,\mathrm{d}u)\big|^2\big]\times\mathbb{E}\big[\big|\int_0^t\int_{\mathbb{R}\backslash \{0\}}\big( \tilde b_{2,n}(X^{n,x}_u+\gamma(z))-\tilde b_{2,n}(X^{n,x}_u)\\
			&-\tilde b_{2}(X^{x}_u+\gamma(z))+\tilde b_{2}(X^{x}_u)\big) N(\mathrm{d}z,\mathrm{d}u)\big|^6\big]\\
			\leq &C\int_0^t\int_{\mathbb{R}\backslash \{0\}}\mathbb{E}\big[\big|  \tilde b_{2,n}(X^{n,x}_u+\gamma(z))-\tilde b_{2,n}(X^{n,x}_u)-\tilde b_{2}(X^{x}_u+\gamma(z))\\
			&+\tilde b_{2}(X^{x}_u)\big|^2\big]\nu(\mathrm{d}z)\mathrm{d}u\times\Big(\int_0^t\int_{\mathbb{R}\backslash \{0\}}\mathbb{E}\big[\big|  \tilde b_{2,n}(X^{n,x}_u+\gamma(z))-\tilde b_{2,n}(X^{n,x}_u)\\
			&-\tilde b_{2}(X^{x}_u+\gamma(z))+\tilde b_{2}(X^{x}_u)\big|^6\big]\nu(\mathrm{d}z)\mathrm{d}u\times \mathbb{E}\big[\big(\int_0^t\int_{\mathbb{R}\backslash \{0\}}\big|  \tilde b_{2,n}(X^{n,x}_u+\gamma(z))\\
			&-\tilde b_{2,n}(X^{n,x}_u)-\tilde b_{2}(X^{x}_u+\gamma(z))+\tilde b_{2}(X^{x}_u)\big|^2\nu(\mathrm{d}z)\mathrm{d}u\big)^{3}\big]\Big)\\
			\leq &C\int_0^t\int_{\mathbb{R}\backslash \{0\}}\mathbb{E}\big[\big|  \int_0^1\big\{b_{2,n}(X^{n,x}_u+\lambda \gamma(z))X^{n,x}_u-b_{2}(X^{x}_u+\lambda \gamma(z))X^{x}_u\big\}\diffns \lambda\big|^2\big]\\
			&\gamma^2(z)\nu(\mathrm{d}z)\mathrm{d}u\times\Big(\int_0^t\int_{\mathbb{R}\backslash \{0\}}\mathbb{E}\big[\big|  \int_0^1\big\{b_{2,n}(X^{n,x}_u+\lambda \gamma(z))X^{n,x}_u\\
			&-b_{2}(X^{x}_u+\lambda \gamma(z))X^{x}_u\big\}\diffns \lambda\big|^6\big]\gamma^6(z)\nu(\mathrm{d}z)\mathrm{d}u\\
			&\times \mathbb{E}\big[\big(\int_0^t\int_{\mathbb{R}\backslash \{0\}}\big|  \int_0^1\big\{b_{2,n}(X^{n,x}_u+\lambda \gamma(z))X^{n,x}_u-b_{2}(X^{x}_u+\lambda \gamma(z))X^{x}_u\big\}\diffns \lambda\big|^2\\
			&\gamma^2(z)\nu(\mathrm{d}z)\mathrm{d}u\big)^{3}\big]\Big)\\
			\leq&C\int_0^t\int_{\mathbb{R}\backslash \{0\}}  \int_0^1\Big(\mathbb{E}\big[\big|b_{2,n}(X^{n,x}_u+\lambda \gamma(z))-b_{2}(X^{x}_u+\lambda \gamma(z))\big|^4\big]^{1/2}\mathbb{E}\big[\big|X^{n,x}_u\big|^4\big]^{1/2}\\
			&+\mathbb{E}\big[\big|b_{2}(X^{x}_u+\lambda \gamma(z))X^{x}_u\big|^8\big]^{1/8}\mathbb{E}\big[\big|X^{n,x}_u\big|^4+\big|X^{x}_u\big|^4\big]^{1/4}\mathbb{E}\big[\big|X^{n,x}_u-X^{x}_u\big|^2\big]^{1/2}\Big)
			\\
			&\diffns \lambda\gamma^2(z)\nu(\mathrm{d}z)\mathrm{d}u\times\Big(\int_0^t\int_{\mathbb{R}\backslash \{0\}} \int_0^1\mathbb{E}\big[\big| X^{n,x}_u\big|^6\big]+\mathbb{E}\big[\big|X^{x}_u\big|^6\big]\diffns \lambda\gamma^6(z)\nu(\mathrm{d}z)\mathrm{d}u\\
			&\times \mathbb{E}\big[\big(\int_0^t\int_{\mathbb{R}\backslash \{0\}}  \int_0^1\big(\big|X^{n,x}_u|^2+\big|X^{x}_u\big|^2\big)\diffns \lambda\gamma^2(z)\nu(\mathrm{d}z)\mathrm{d}u\big)^{3}\big]\Big)\\
			\leq&C\int_0^t\int_{\mathbb{R}\backslash \{0\}} \int_0^1\Big(\mathbb{E}\big[\big|b_{2,n}(X^{n,x}_u+\lambda \gamma(z))-b_{2}(X^{x}_u+\lambda \gamma(z))\big|^4\big]^{1/2}\\
			&+\mathbb{E}\big[\big|X^{n,x}_u-X^{x}_u\big|^2\big]^{1/2}\Big)
			\diffns \lambda\gamma^2(z)\nu(\mathrm{d}z)\mathrm{d}u\times\int_0^t\int_{\mathbb{R}\backslash \{0\}} \mathbb{E}\big[\big| X^{n,x}_u\big|^6\big]+\mathbb{E}\big[\big|X^{x}_u\big|^6\big]\\
			&\times\gamma^6(z)\nu(\mathrm{d}z)\mathrm{d}u\times\mathbb{E}\big[\big(\int_0^t\int_{\mathbb{R}\backslash \{0\}}  \big(\big|X^{n,x}_u|^2+\big|X^{x}_u\big|^2\big)\gamma^2(z)\nu(\mathrm{d}z)\mathrm{d}u\big)^{3}\big]\\
			\leq &C(\mathcal{I}^1_{7,n} +\mathcal{I}^2_{7,n})
		\end{align*}
		Here we have used the mean value theorem, the uniform boundedness of $\tilde b'_{2,n}$ and $\tilde b'_{2}$, the fact that $X^{x}_u$ and $X^{n,x}_u$ have moments of all order uniformly in $n$, together with the bound $\int_{\mathbb{R}\backslash\{0\}}\gamma^p(z)\nu(\mathrm{d}z)<\infty$ for $p\geq 2$.  The convergence of $\mathcal{I}^1_{7,n}$ follows from Lemma \ref{lem:time_integrated_bn_full} , while that of $\mathcal{I}^2_{7,n}$ follows from strong convergence of $X^{n,x}_u$ to $X^{x}_u$ in $L^2(\Omega)$. 
		
		The convergence of the derivative can be established by showing that all moments of $\mathcal{J}_{1,n}$ are uniformly bounded; that is, $\underset{n}{\sup}\mathbb{E}[|\mathcal{J}_{1,n}|^p]<\infty$ for all $p\geq 1$.  
		Indeed, taking the $p$-power, taking the expectation and applying H\"older inequality, together with the properties of stochastic integral, the uniform boundedness of $b_{2,n}$, and the uniform Lipschitz continuity and uniform linear growth condition of $\tilde b_{2,n}$, we have
		\begin{align}	\label{eqderXn1}
			&\mathbb{E}\big[\big|\mathcal{J}_{1,n}\big|^p\big]\notag\\ \leq&e^{-2p\tilde b_{2,n}(x)}\mathbb{E}\big[\exp\big\{5p\int_0^t b'_{1}(X^{x,n}_u)\mathrm{d}u\big\}\big]^{1/5}\mathbb{E}\big[\exp\big\{10p\tilde b_{2,n}(X^{n,x}_t)\big\}\big]^{1/5}\notag\\
			&\times\mathbb{E}\big[\exp\big\{-10p\int_0^tb_{2,n}^2(X^x_u) \mathrm{d}u\big\}\big]^{1/5} 
			\mathbb{E}\big[\exp\big\{-10p\int_0^tb_{2,n}(X^{n,x}_u) \mathrm{d}B_u\big\}\big]^{1/5}\notag\\
			&\times\mathbb{E}\big[\exp\big\{-10p\int_0^t\int_{\mathbb{R}\backslash\{0\}}\big\{ \tilde b_{2,n}(X^{n,x}_u+\gamma(z))-\tilde b_{2,n}(X^{n,x}_u)\big\} N(\mathrm{d}z,\mathrm{d}u)\big\}\big]^{1/5}\notag\\
			\leq & e^{2MP(1+|x|)}C(p)\mathbb{E}\big[\exp\big\{10pM(1+|X^{n,x}_t|)\big\}\big]^{1/5}\mathbb{E}\big[\exp\big\{50p^2\int_0^tb^2_{2,n}(X^{n,x}_u) \mathrm{d}u\big\}\big]^{1/5}\notag\\
			&\times\mathbb{E}\big[\exp\big\{\int_0^t\int_{\mathbb{R}\backslash\{0\}}(e^{\tilde f_{2,n}(X^{n,x}_u,\gamma(z))}-1) \nu(\mathrm{d}z)\mathrm{d}u\big\}\big]^{1/5}
		\end{align}
		where $\tilde f_{2,n}(X^{n,x}_u,\gamma(z))=-10p(\tilde b_{2,n}(X^{n,x}_u+\gamma(z))-\tilde b_{2,n}(X^{n,x}_u))$. Observe that \\$|\tilde f_{2,n}(X^{n,x}_u,\gamma(z))|\leq 10pM|\gamma(z)|$ and the function $x\mapsto \tilde g(x)=e^x-1-x$ is nonnegative and increasing on $\mathbb{R}^+$, the supremum of 
		$g(\tilde f)$ over the interval \\$[-10pM|\gamma(z)|,10pM|\gamma(z)|]$ is attained at $\tilde f_{2,n}(X^{n,x}_u,\gamma(z))=10pM|\gamma(z)|$.
		Hence
		\begin{align*}	
			\mathbb{E}\big[\big|\mathcal{J}_{1,n}\big|^p\big] 
			\leq &C(p,M,T)\mathbb{E}\big[\exp\big\{10pM(1+|X_t^{n,x}|)\big\}\big]^{1/5}\notag\\
			&\times \exp\big\{\frac{t}{5}\int_{\mathbb{R}\backslash \{0\}}\big(e^{10pM|\gamma(z)|}-1|\big) \nu(\mathrm{d}z)\big\}.
		\end{align*}
		The desired uniform bound follows if we can show that $\sup_n\mathbb{E}\big[e^{10pM|X^{n,x}_t|}\big]<\infty$.
		Recall the following from \eqref{eq:SDErvmain1n}
		
		\begin{align*}
			|X^{x,n}_t |\leq&  |x| + \big|\int_0^t b_n(X^{x,n}_u)\mathrm{d}u\big| + \big|\sigma B_t\big| +\big|\int_0^t\int_{\mathbb{R}\backslash \{0\}}\gamma (z) N(\mathrm{d}z,\diffns t)\big|\notag\\
			\leq&  |x| +\int_0^t M \big|1+X^{x,n}_u \big|\mathrm{d}u + \big|\sigma\big|\sup_{0\leq t}\big| B_t\big| +\sup_{0\leq t}\big|\int_0^t\int_{\mathbb{R}\backslash \{0\}}\gamma (z) N(\mathrm{d}z,\diffns t)\big|	\notag\\
			\leq&  e^{MT}|x| +T Me^{MT} + e^{MT}\big|\sigma\big|\sup_{0\leq t}\big| B_t\big| +e^{MT}\sup_{0\leq t}\big|\int_0^t\int_{\mathbb{R}\backslash \{0\}}\gamma (z) N(\mathrm{d}z,\diffns t)\big|.
		\end{align*}
		Thus
		\begin{align*}
			e^{10pM|X^{n,x}_t|}
			\leq&  \exp\big\{10pM\big(e^{MT}|x|+T Me^{MT}\big)\big\}\exp\big\{10pM e^{MT}\big|\sigma\big|\sup_{0\leq t\leq T}\big| B_t\big|\big\}\notag\\
			&\times\exp\big\{10pMe^{MT}\sup_{0\leq t\leq T}\big|\int_0^t\int_{\mathbb{R}\backslash \{0\}}\gamma (z) N(\mathrm{d}z,\diffns t)\big|\big\}.
		\end{align*}
		Taking the expectation and using the Cauchy-Schwartz inequality gives:
		\begin{align*}
			&	\mathbb{E}\big[	e^{10pM|X^{n,x}_t|}\big]\\
			\leq&  \exp\big\{10pM\big(e^{MT}|x|+T Me^{MT}\big)\big\}\mathbb{E}\big[\exp\big\{20pM e^{MT}\big|\sigma\big|\sup_{0\leq t\leq T}\big| B_t\big|\big\}\big]^{1/2}\notag\\
			&\times\mathbb{E}\big[\exp\big\{20pMe^{MT}\sup_{0\leq t\leq T}\big|\int_0^t\int_{\mathbb{R}\backslash \{0\}}\gamma (z) N(\mathrm{d}z,\diffns t)\big|\big\}\big]^{1/2}\\
			\leq&  \exp\big\{10pM\big(e^{MT}|x|+T Me^{MT}\big)\big\}\mathbb{E}\big[\exp\big\{20pM e^{MT}\big|\sigma\big|\sup_{0\leq t\leq T}\big| B_t\big|\big\}\big]^{1/2}\notag\\
			&\times \exp\big\{10pMe^{MT}\int_0^T\int_{\mathbb{R}\backslash \{0\}}|\gamma (z)| \nu(\mathrm{d}z)\diffns t\big\}\\
			&\times\mathbb{E}\big[\exp\big\{20pMe^{MT}\sup_{0\leq t\leq T}\big|\int_0^t\int_{\mathbb{R}\backslash \{0\}}\gamma (z) \tilde N(\mathrm{d}z,\diffns t)\big|\big\}\big]^{1/2}.
		\end{align*}
		We only show that the last term is uniformly bounded. Using the fact that the exponential function is continuous an increasing, we have 
		\begin{align*}
			&\mathbb{E}\big[\exp\big\{20pMe^{MT}\sup_{0\leq t\leq T}\big|\int_0^t\int_{\mathbb{R}\backslash \{0\}}\gamma (z)\tilde N(\mathrm{d}z,\diffns t)\big|\big\}\big]\\
			=&\mathbb{E}\big[\exp\big\{\sup_{0\leq t\leq T}\big|\int_0^t\int_{\mathbb{R}\backslash \{0\}}20pMe^{MT}\gamma (z)\tilde N(\mathrm{d}z,\diffns t)\big|\big\}\big]\\
			\leq &\mathbb{E}\big[\sup_{0\leq t\leq T}\exp\big\{\big|\int_0^t\int_{\mathbb{R}\backslash \{0\}}20pMe^{MT}\gamma (z)\tilde N(\mathrm{d}z,\diffns t)\big|\big\}\big]\\
			\leq &\mathbb{E}\big[\sup_{0\leq t\leq T}\exp\big\{\int_0^t\int_{\mathbb{R}\backslash \{0\}}\bar \gamma (z)\tilde N(\mathrm{d}z,\diffns t)\big\}\big]\\
			&+\mathbb{E}\big[\sup_{0\leq t\leq T}\exp\big\{-\int_0^t\int_{\mathbb{R}\backslash \{0\}}\bar \gamma (z)\tilde N(\mathrm{d}z,\diffns t)\big\}\big].
		\end{align*}
		where $\bar \gamma(z)=20pMe^{MT}\gamma (z)$ and we have used $e^{|X|}\leq e^{X}+e^{-X}$ for a random variable $X$.
		\begin{align*}
			&\mathbb{E}\big[\sup_{0\leq t\leq T}\exp\big\{\int_0^t\int_{\mathbb{R}\backslash \{0\}}\bar \gamma (z)\tilde N(\mathrm{d}z,\diffns t)\big\}\big]\\
			=& \mathbb{E}\big[\sup_{0\leq t\leq T}\exp\big\{\int_0^t\int_{\mathbb{R}\backslash \{0\}}\bar \gamma (z)\tilde N(\mathrm{d}z,\diffns t)-t\int_{\mathbb{R}\backslash \{0\}}\big(e^{\bar \gamma (z)}-1-\bar \gamma (z)\big)\nu(\mathrm{d}z)\big\}\\
			&\times \exp\big\{t\int_{\mathbb{R}\backslash \{0\}}\big(e^{\bar \gamma (z)}-1-\bar \gamma (z)\big)\nu(\mathrm{d}z)\big\}\big]\\
			= & \mathbb{E}\big[\sup_{0\leq t\leq T}\mathcal{Z}_t \exp\big\{t\int_{\mathbb{R}\backslash \{0\}}\big(e^{\bar \gamma (z)}-1-\bar \gamma (z)\big)\nu(\mathrm{d}z)\big\}\big]	\\
			\leq & \exp\big\{T\int_{\mathbb{R}\backslash \{0\}}\big(e^{\bar \gamma (z)}-1-\bar \gamma (z)\big)\nu(\mathrm{d}z)\big\}\mathbb{E}\big[\sup_{0\leq t\leq T}\mathcal{Z}_t^2 \big]^{1/2},
		\end{align*}
		where $\mathcal{Z}$ is a nonnegative martingale. Thus by the Doob maximal inequality and similar step as in \eqref{eqderXn1} give
		\begin{align*}
			\mathbb{E}\big[\sup_{0\leq t\leq T}\mathcal{Z}_t^2 \big] \leq &4 \mathbb{E}\big[\mathcal{Z}_T^2 \big]
			=\mathbb{E}\big[\exp\big\{\int_0^T\int_{\mathbb{R}\backslash \{0\}}2\bar \gamma (z)\tilde N(\mathrm{d}z,\diffns t)\\
			&-2T\int_{\mathbb{R}\backslash \{0\}}\big(e^{\bar \gamma (z)}-1-\bar \gamma (z)\big)\nu(\mathrm{d}z)\big\}\big]\notag\\
			=&\mathbb{E}\big[\exp\big\{T\int_{\mathbb{R}\backslash \{0\}}\big(e^{2\bar \gamma (z)}-1-2\bar \gamma (z)\big)\nu(\mathrm{d}z)\\
			&-2T\int_{\mathbb{R}\backslash \{0\}}\big(e^{\bar \gamma (z)}-1-\bar \gamma (z)\big)\nu(\mathrm{d}z)\big\}\big]\\
			=&\mathbb{E}\big[\exp\big\{T\int_{\mathbb{R}\backslash \{0\}}\big(e^{\bar \gamma (z)}-1\big)^2\nu(\mathrm{d}z)\big\}\big],
		\end{align*}
		Thus $\sup_n\mathbb{E}\big[e^{10pM|X^{n,x}_t|}\big]<\infty$ and the claim is proved.
	\end{proof}
	The result follows
\end{proof}

\section{Auxiliary Results}

\begin{lemm}[An approximation result]
	\label{lem:barb_bn_simultaneous}
	Let $G:\R\to\R$ be a $C^1$-diffeomorphism given by \eqref{eqdefG1} and \eqref{eqc1}. 
	Let $\bar b:\R\to\R$ be given as in \eqref{eqtrans1}, then $\bar b$ is Lipschitz. Define $b:\R\to\R$ (a.e.) by
	\begin{equation}\label{eq:def_b_from_barb_simul}
		b(x):=\frac{\bar b(G(x))-\frac12 G''(x)}{G'(x)}\qquad \text{for a.e.\ }x\in\R.
	\end{equation}
	Then there exists a sequence $(\bar b_n)_{n\ge1}\subset C_0^\infty(\R)$ such that, for every
	$r\in[1,\infty)$,
	\begin{equation}\label{eq:barb_conv_simul}
		\bar b_n\to \bar b\quad\text{in }W^{1,r}_{\text{loc}}(\R)
		\qquad\text{(in particular, }\bar b_n'\to \bar b'\text{ in }L^r_{\text{loc}}(\R)\text{)}.
	\end{equation}
	Define, for each $n$ and a.e.\ $x\in\R$,
	\begin{equation}\label{eq:def_bn_from_barbn_simul}
		b_n(x):=\frac{\bar b_n(G(x))-\frac12 G''(x)}{G'(x)}.
	\end{equation}
	Then for every $r\in[1,\infty)$:
	\begin{enumerate}
		\item[(i)] $\bar b_n\in C_0^\infty(\R)$ and $b_n\in W^{1,r}_{\text{loc}}(\R)$.
		\item[(ii)] $\bar b_n\to \bar b$ Lebesgue-a.e.\ on $\R$, and $b_n\to b$ Lebesgue-a.e.\ on $\R$.
		\item[(iii)] $b_n\to b$ in $W^{1,r}_{\text{loc}}(\R)$, i.e.
		\[
		b_n\to b \text{ in }L^r_{\text{loc}}(\R)
		\quad\text{and}\quad
		b_n'\to b'\text{ in }L^r_{\text{loc}}(\R),
		\]
		where derivatives are understood as weak derivatives.
	\end{enumerate}
\end{lemm}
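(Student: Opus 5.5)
The plan is to construct $\bar b_n$ by truncation and mollification, and then transfer every statement about $\bar b_n$ to $b_n$ through the explicit relation between $b_n$ and $b$. Three observations drive the argument.
\begin{itemize}
\item By Remark~\ref{remdenY1}, $0<\underline g\le G'\le \overline g$ and $G''\in L^\infty(\R)$. Hence $G$ is bi-Lipschitz, $1/G'$ is bounded, and $(1/G')'=-G''/(G')^2$ is bounded a.e.
\item Taking the difference of \eqref{eq:def_bn_from_barbn_simul} and \eqref{eq:def_b_from_barb_simul}, the $G''$ terms cancel exactly:
\begin{equation*}
b_n(x)-b(x)=\frac{(\bar b_n-\bar b)(G(x))}{G'(x)} .
\end{equation*}
\item Consequently every convergence statement for $b_n$ reduces to a statement about $(\bar b_n-\bar b)\circ G$, multiplied by the $W^{1,\infty}_{\rm loc}$ function $1/G'$.
\end{itemize}

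\textbf{Step 1: construction of $\bar b_n$.} Let $\rho_\varepsilon$ be a standard mollifier and $\chi_n\in C_0^\infty(\R)$ a cutoff with $\chi_n=1$ on $[-n,n]$, $0\le\chi_n\le1$ and $|\chi_n'|\le 1$. Set $\bar b_n:=\chi_n\,(\bar b*\rho_{1/n})$, which lies in $C_0^\infty(\R)$.

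Since $\bar b$ is Lipschitz, $\bar b*\rho_{1/n}\to\bar b$ locally uniformly. Moreover $(\bar b*\rho_{1/n})'=\bar b'*\rho_{1/n}$ with $\bar b'\in L^\infty$, so this converges to $\bar b'$ in $L^r_{\rm loc}$ for every finite $r$ and a.e. on $\R$ (at Lebesgue points of $\bar b'$).

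On any compact $K$ we have $\chi_n\equiv1$ for $n$ large, so $\bar b_n\to\bar b$ in $W^{1,r}(K)$. This gives \eqref{eq:barb_conv_simul}, together with $\bar b_n\to\bar b$ everywhere and $\bar b_n'\to\bar b'$ a.e.

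\textbf{Step 2: items (ii) and (iii).} Pointwise convergence of $b_n$ to $b$ follows directly from the displayed identity. For the derivatives, by the chain rule for Lipschitz/Sobolev functions composed with the $C^1$ bi-Lipschitz map $G$,
\begin{equation*}
(b_n-b)'=(\bar b_n-\bar b)'(G)-\frac{G''}{(G')^2}(\bar b_n-\bar b)(G).
\end{equation*}
On a compact $K$, the change of variables $y=G(x)$ with Jacobian bounded between $\underline g$ and $\overline g$ gives
\begin{equation*}
\|(\bar b_n-\bar b)\circ G\|_{W^{1,r}(K)}\le C\,\|\bar b_n-\bar b\|_{W^{1,r}(G(K))}.
\end{equation*}
Combined with the boundedness of $1/G'$ and $G''/(G')^2$, this yields $\|b_n-b\|_{W^{1,r}(K)}\to0$, which is (iii). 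The same change of variables transports the a.e. convergence of $\bar b_n'$, because $G$ maps Lebesgue-null sets to null sets.

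\textbf{Step 3: item (i), the main obstacle.} We have $\bar b_n\in C_0^\infty$, and $\bar b_n(G)/G'$ is in $W^{1,\infty}_{\rm loc}$. So (i) reduces to showing that $G''/(2G')$ lies in $W^{1,r}_{\rm loc}(\R)$, and for this I would go back to the explicit formula \eqref{eqdefG1}.

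Away from the $\xi_k$, $G$ is smooth. Near $\xi_k$ one differentiates
\begin{equation*}
\phi\Big(\frac{x-\xi_k}{c}\Big)(x-\xi_k)|x-\xi_k|
\end{equation*}
twice and checks how $G''$ behaves across $\xi_k$. This is the step I expect to be delicate. If a jump of $G''$ survives at $\xi_k$, then the weak derivative in (i) and (iii) has to be understood as in the paper's convention for $b$, namely as the absolutely continuous part away from $\{\xi_k\}$. In that case the essential content of (iii) is the convergence of $b_n-b$ in $W^{1,r}_{\rm loc}(\R)$. That convergence holds globally by Step 2, because the singular $G''$ term has cancelled.
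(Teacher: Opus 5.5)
Your Steps 1 and 2 follow the paper's proof essentially step for step. The paper also obtains $\bar b_n$ by mollification; it omits the details, which your cutoff-plus-mollifier construction supplies, and your construction gives $\bar b_n\to\bar b$ everywhere, so (ii) is immediate. Like you, it uses the identity $b_n-b=(\bar b_n-\bar b)\circ G/G'$, differentiates it using $G',1/G'\in W^{1,\infty}$ to get exactly your formula for $(b_n-b)'$, and concludes with the change of variables $\|h\circ G\|^r_{L^r(K)}\le \underline g^{-1}\|h\|^r_{L^r(G(K))}$.

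Step 3 is where you go beyond the paper, and you are right to hesitate. You should replace the conditional wording by the computation, because the jump does survive. Near $\xi_k$ only the $k$-th summand of \eqref{eqdefG1} is nonzero, since the supports are disjoint by \eqref{eqc1}. With $u=x-\xi_k$, the facts $\phi(0)=1$, $\phi'(0)=0$ and $(u|u|)''=2\sgn(u)$ give $G''(\xi_k\pm)=\pm2\alpha_k$. That is a jump of $4\alpha_k=2\big(b(\xi_k-)-b(\xi_k+)\big)\neq0$.

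This is forced by the construction. Indeed $\tfrac12G''=\bar b\circ G-G'b$ with $\bar b\circ G$ and $G'$ continuous, so $G''$ must jump exactly where $b$ does; this cancellation is precisely why $\bar b$ is continuous, hence Lipschitz. Consequently $G''/(2G')\notin W^{1,r}_{\mathrm{loc}}(\R)$. Neither $b$ nor $b_n$ belongs to $W^{1,r}_{\mathrm{loc}}(\R)$: both jump by $-2\alpha_k$ at $\xi_k$, so their distributional derivatives carry Dirac masses there.

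So the second half of (i) is false as stated. Item (iii) holds only in the form you isolate: $b_n-b\to0$ in $W^{1,r}_{\mathrm{loc}}(\R)$. Equivalently, $b_n\to b$ in $L^r_{\mathrm{loc}}$ and the a.e. derivatives converge in $L^r_{\mathrm{loc}}$. These derivatives exist because $b$ and $b_n$ are Lipschitz on each interval between consecutive $\xi_k$, $\phi''$ being Lipschitz. On each such interval the a.e. derivative of $b_n-b$ is $b_n'-b'$, and it coincides with the weak derivative of the locally Lipschitz function $b_n-b$.

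The paper's proof does not notice this. Its sentence saying that the quotient rule yields $b_n\in W^{1,r}(K)$ covers only $\bar b_n(G)/G'$, not the term $G''/(2G')$. Your reading of the lemma is therefore the correct one, and under it your Steps 1 and 2 already form a complete proof of everything in the statement that is actually true.
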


\begin{proof}
	The Construction of $\bar b_n$ and convergence in $W^{1,r}_{\text{loc}}$ follows by standard mollification since $\bar b$ is Lipschitz. We omit the proof.

	From \eqref{eq:def_bn_from_barbn_simul} and \eqref{eq:def_b_from_barb_simul},
	\[
	b_n(x)-b(x)=\frac{\bar b_n(G(x))-\bar b(G(x))}{G'(x)}\qquad\text{for a.e.\ }x.
	\]
	Since $\bar b_n\to\bar b$ a.e.\ and $G$ is continuous, $\bar b_n(G(x))\to \bar b(G(x))$ for a.e.\ $x$,
	and $G'(x)\neq 0$, hence $b_n(x)\to b(x)$ a.e.
	
	Now fix a compact interval $K=[-R,R]$ and set $I:=G(K)$ (compact).
	By \eqref{eq:G_assump_simul} and the change of variables $y=G(x)$ (with $dy=G'(x)\,dx$),
	for any $h\in L^r(I)$,
	\begin{equation}\label{eq:change_var_simul}
		\|h\circ G\|_{L^r(K)}^r
		=\int_K |h(G(x))|^r\,dx
		=\int_I |h(y)|^r\,\frac{1}{G'(G^{-1}(y))}\,dy
		\le \underline g^{-1}\|h\|_{L^r(I)}^r.
	\end{equation}
	Applying \eqref{eq:change_var_simul} to $h=\bar b_n-\bar b$ gives
	\[
	\|(\bar b_n-\bar b)\circ G\|_{L^r(K)}\le C_K\,\|\bar b_n-\bar b\|_{L^r(I)}\xrightarrow[n\to\infty]{}0,
	\]
	hence $b_n\to b$ in $L^r(K)$ (using also $\|(G')^{-1}\|_{L^\infty(K)}<\infty$).
	
	To control derivatives, note that $\bar b_n\in C^\infty$ and $G\in C^1$, so
	$(\bar b_n\circ G)'=(\bar b_n'\circ G)\,G'$ a.e.\ on $K$.
	Since $G''\in L^\infty$, we have $G'\in W^{1,\infty}$ and $\frac{1}{G'}\in W^{1,\infty}$,
	so the quotient rule holds in $W^{1,r}$ and $b_n\in W^{1,r}(K)$.
	A direct computation in the sense of weak derivatives yields, a.e.\ on $K$,
	\begin{equation}\label{eq:bnprime_diff_simul}
		b_n'(x)-b'(x)
		=
		(\bar b_n'-\bar b')\big(G(x)\big)
		-\frac{G''(x)}{(G'(x))^2}\,\big(\bar b_n-\bar b\big)\big(G(x)\big).
	\end{equation}
	Consequently,
	\[
	\|b_n'-b'\|_{L^r(K)}
	\le \|(\bar b_n'-\bar b')\circ G\|_{L^r(K)}
	+\Big\|\frac{G''}{(G')^2}\Big\|_{L^\infty(K)}\,\|(\bar b_n-\bar b)\circ G\|_{L^r(K)}.
	\]
	Using \eqref{eq:change_var_simul} with $h=\bar b_n'-\bar b'$ and \eqref{eq:barb_conv_simul}, both terms
	tend to $0$, proving $b_n'\to b'$ in $L^r(K)$.
	Since $K$ is arbitrary, $b_n\to b$ in $W^{1,r}_{\text{loc}}(\R)$.
\end{proof}

\begin{thm}\label{thm:global_bound_clean}
	Let $Y^x$ be the solution of the following SDE
	\[
	\mathrm{d}Y^x_t = b(Y^x_t)\,\mathrm{d}t + \sigma(Y^x_t)\,\mathrm{d}B_t
	+\int_{\mathbb{R}\setminus\{0\}}\gamma(Y^x_{t-},z)\,N(\mathrm{d}z,\mathrm{d}t),
	\qquad Y^x_0=x,
	\]
	where $b,\sigma,\gamma$ are globally Lipschitz with linear growth and $\sigma$ is uniformly elliptic.
	Assume the Poisson random measure has finite activity
	\[
	\lambda:=\nu(\mathbb R\setminus\{0\})<\infty.
	\]
	Furthermore, assume for each $t>0, Y_t^x$ admits a density
	$\rho_t^x(\cdot)$. Then for every $T>0$ there exists a constant $C_T\geq 0$ such that for all $t\in(0,T]$,
	\begin{align}\label{denbound1}
		\sup_{x\in\mathbb R}\ \sup_{y\in\mathbb R}\ \rho_t^x(y)\ \le\ \frac{C_T}{\sqrt{t}}.
	\end{align}
\end{thm}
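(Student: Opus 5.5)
Since the jump part has finite activity $\lambda<\infty$, I will condition on the jump configuration and reduce to density bounds for a pure diffusion. Let $N_t=N(\mathbb R\setminus\{0\},[0,t])$, with jump times $0<\tau_1<\tau_2<\dots$ and marks $Z_1,Z_2,\dots$. Between jumps, $Y^x$ solves the continuous SDE
\[
\mathrm dY=b(Y)\,\mathrm dt+\sigma(Y)\,\mathrm dB_t,
\]
whose coefficients are Lipschitz and $\sigma$ is uniformly elliptic. By the Aronson-type Gaussian upper bound for uniformly elliptic diffusions with Lipschitz coefficients, that diffusion has a transition density $p(s,u,y)\le C_T s^{-1/2}$ for $s\in(0,T]$, uniformly in $u,y$. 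I will take this estimate as known. The linear growth of $b$ is harmless, because the $\sup_y$ bound on the density is insensitive to the drift over finite horizons.

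The key step is a decomposition according to the last jump before $t$. On $\{N_t=0\}$, which has probability $e^{-\lambda t}$, $Y^x_t$ is the diffusion started at $x$. On $\{N_t\ge1\}$, set $\tau:=\tau_{N_t}$, the last jump time before $t$. By the strong Markov property and the independence of $B$ from $N$, conditionally on $\mathcal F_\tau$ and on the event that no further jump occurs in $(\tau,t]$, $Y_t$ is the diffusion started at $Y_\tau$ and run for time $t-\tau$. Hence, for any Borel set $A$,
\[
\mathbb P(Y_t\in A)=e^{-\lambda t}\int_A p(t,x,y)\,\mathrm dy+\mathbb E\Big[\mathbf 1_{\{N_t\ge1\}}\int_A p(t-\tau,Y_\tau,y)\,\mathrm dy\Big].
\]
The density $\rho^x_t(y)$ is therefore bounded by $C_Tt^{-1/2}+\mathbb E\big[\mathbf 1_{\{N_t\ge1\}}\,C_T(t-\tau)^{-1/2}\big]$, and it remains to control $\mathbb E[(t-\tau)^{-1/2};N_t\ge1]$. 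By time reversal of the Poisson process, $t-\tau$ is the first jump time of a rate-$\lambda$ Poisson process run backward from $t$, restricted to $[0,t]$. It therefore has density $\lambda e^{-\lambda s}$ on $(0,t)$, so
\[
\mathbb E\big[(t-\tau)^{-1/2};N_t\ge1\big]=\int_0^t\lambda e^{-\lambda s}s^{-1/2}\,\mathrm ds\le 2\lambda\sqrt t\le 2\lambda\sqrt T.
\]
Combining the two pieces gives $\rho^x_t(y)\le C_T t^{-1/2}+2C_T\lambda\sqrt T\le C_T'\,t^{-1/2}$ uniformly in $x$ and $y$. This is \eqref{denbound1}.

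I expect the main obstacle to be justifying the conditional decomposition rigorously. The intended route is to construct $Y$ by interlacing: solve the diffusion up to $\tau_1$, apply the jump $Y_{\tau_1}=Y_{\tau_1-}+\gamma(Y_{\tau_1-},Z_1)$, restart, and so on. This construction agrees with the strong solution by pathwise uniqueness. One then has to check that the post-$\tau$ diffusion is driven by increments of $B$ independent of $(\tau,Y_\tau)$ and of the event of no further jumps, which holds because $N$ is independent of $B$. The other delicate input is the uniform-in-starting-point Gaussian bound $p\le C_Ts^{-1/2}$ for unbounded Lipschitz drift. If needed, I would obtain it from the $\sigma$-Lamperti transform followed by Girsanov on short time intervals, or simply cite the parametrix construction. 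Note that the assumed existence of $\rho^x_t$ is not needed; the argument produces the density.
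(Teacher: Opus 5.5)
Your argument is essentially the paper's. Both decompose at the last jump time $S_t=\tau_{N_t}$, bound the conditional density of $Y^x_t$ by the diffusion estimate $A_T\Delta_t^{-1/2}$ with $\Delta_t=t-S_t$, and then control $\mathbb E[\Delta_t^{-1/2}]$. The differences are minor:
\begin{enumerate}
\item You compute $\mathbb E[\Delta_t^{-1/2}]$ exactly by time reversal, giving a jump contribution of at most $2\lambda\sqrt t$. The paper instead conditions on $N_t=n$, uses $\tau_n/t\sim\mathrm{Beta}(n,1)$, and applies Jensen to $\mathbb E[\sqrt{N_t}]$.
\item You justify the conditioning by freezing the Poisson configuration and using that $N$ is independent of $B$. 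This is more careful than the paper, which appeals to the strong Markov property at $S_t$, and $S_t$ is not a stopping time.
\end{enumerate}
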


\begin{proof}

	Consider first the diffusion SDE without jumps
	\[
	\mathrm d  Z_t=b(Z_t)\,\mathrm dt+\sigma(Z_t)\,\mathrm dB_t. 
	\]
	For every $T>0$ there exists $A_T<\infty$ such that its transition density $p(t,u,v)$ satisfies (see for e.g. \cite{Higa03, KuSt87, NoSt91})
	\begin{equation}\label{eq:aronson_clean}
		\sup_{0<s\le T}\ \sup_{u,v\in\mathbb R}\ p(s,u,v)\ \le\ \frac{A_T}{\sqrt{s}}.
	\end{equation}
	
	Fix $T>0$ and $t\in(0,T]$. Let $N_t$ be the number of jumps up to time $t$, and let
	$0<\tau_1<\cdots<\tau_{N_t}\le t$ be the ordered jump times (with $T_0:=0$).
	Define the last jump time before $t$ by
	\[
	S_t:=\tau_{N_t}\quad\text{(with the convention $S_t=0$ on $\{N_t=0\}$)},
	\qquad
	\Delta_t:=t-S_t\in(0,t].
	\]
	By definition of $S_t$, there are no jumps on $(S_t,t]$, hence on this interval $Y^x$ evolves
	as the diffusion with coefficients $(b,\sigma)$.
	By the strong Markov property at time $S_t$ (and independence of Brownian increments),
	conditionally on $\mathcal F_{S_t}$ the terminal value $Y_t^x$ has the same law as the diffusion
	started from $Y_{S_t}^x$ run for time $\Delta_t$. Therefore its conditional density is
	\[
	\rho_t^{x\,|\,\mathcal F_{S_t}}(y)=p(\Delta_t,Y_{S_t}^x,y).
	\]
	Using \eqref{eq:aronson_clean} we obtain the conditional $L^\infty$ bound
	\[
	\sup_{y\in\mathbb R}\rho_t^{x\,|\,\mathcal F_{S_t}}(y)
	\le \sup_{u,v\in\mathbb R}p(\Delta_t,u,v)
	\le \frac{A_T}{\sqrt{\Delta_t}}.
	\]
	Taking expectations yields, for all $x,y$,
	\begin{equation}\label{eq:rho_by_gap}
		\rho_t^x(y)=\mathbb E\Big[\rho_t^{x\,|\,\mathcal F_{S_t}}(y)\Big]
		\le A_T\,\mathbb E\Big[\Delta_t^{-1/2}\Big].
	\end{equation}
	
	It remains to estimate $\mathbb E[\Delta_t^{-1/2}]$ uniformly over $t\in(0,T]$.
	Decompose with respect to $N_t$. If $N_t=0$, then $\Delta_t=t$.
	If $N_t=n\ge 1$, then $\Delta_t=t-\tau_n$, where $\tau_n$ is the maximum of $n$ i.i.d.\ $\mathrm{Unif}(0,t)$
	variables; equivalently $\tau_n/t\sim \mathrm{Beta}(n,1)$ and
	\[
	\mathbb E[(t-\tau_n)^{-1/2}\mid N_t=n]
	=\frac{n}{\sqrt t}\int_0^1 (1-u)^{-1/2}u^{n-1}\,du
	=\frac{n}{\sqrt t}\,B\!\left(n,\tfrac12\right).
	\]
	Hence
	\[
	\mathbb E[\Delta_t^{-1/2}]
	=\frac{1}{\sqrt t}\,\mathbb P(N_t=0)
	+\frac{1}{\sqrt t}\sum_{n=1}^\infty \mathbb P(N_t=n)\,n\,B\!\left(n,\tfrac12\right).
	\]
	Using $nB(n,\tfrac12)=\sqrt\pi\,\Gamma(n+1)/\Gamma(n+1/2)\le C\sqrt n$ for $n\ge1$, we obtain
	\[
	\sum_{n=1}^\infty \mathbb P(N_t=n)\,n\,B\!\left(n,\tfrac12\right)
	\le C\sum_{n=1}^\infty \mathbb P(N_t=n)\sqrt n
	= C\,\mathbb E[\sqrt{N_t}].
	\]
	Since $N_t\sim \mathrm{Poisson}(\lambda t)$ and $\sqrt{\cdot}$ is concave, Jensen gives
	\[
	\mathbb E[\sqrt{N_t}]\le \sqrt{\mathbb E[N_t]}=\sqrt{\lambda t}\le \sqrt{\lambda T}.
	\]
	Therefore there exists $C_T'<\infty$ such that
	\[
	\mathbb E[\Delta_t^{-1/2}]\le \frac{C_T'}{\sqrt t},\qquad t\in(0,T].
	\]
	Plugging this into \eqref{eq:rho_by_gap} yields
	\[
	\rho_t^x(y)\le A_T\,\frac{C_T'}{\sqrt t}=: \frac{C_T}{\sqrt t},
	\qquad t\in(0,T],\ x,y\in\mathbb R,
	\]
	and taking suprema in $(x,y)$ completes the proof.
\end{proof}

\begin{lemm}[A convergence result]
	\label{lem:time_integrated_bn_full}
	Assume:
	\begin{enumerate}
		\item[(L1)] $b_n\to b$ Lebesgue-a.e.\ on $\R$ and there exists $K>0$ such that
		\[
		|b_n(x)|+|b(x)|\le K(1+|x|)\qquad \forall x\in\R,\ \forall n.
		\]
		\item[(L2)] For every $t\in(0,T]$, $Y_t^{n,x}\rightarrow Y_t^x$ in $L^2(\Omega)$ as $n\to\infty$.
		\item[(L3)] Uniform $p$ moments:
		\[
		\sup_{n}\sup_{t\in[0,T]}\E[|Y_t^{n,x}|^p]<\infty.
		\]
		\item[(L4)] For every $t\in(0,T]$, $Y_t^x$ admits a density $\rho_t^x$ satisfying
		\begin{equation}\label{eq:rho_sup_bound_time_full}
			\rho_t^x(y)\le \frac{C}{\sqrt t}\qquad\text{for a.e.\ }y\in\R,
		\end{equation}
		with a constant $C$ independent of $y$ and $t$.
	\end{enumerate}
	Then
	\[
	\lim_{n}	\int_0^T \E\Big[\,|b_n(Y_t^{n,x})-b(Y_t^{n,x})|^p\,\Big]\,\diffns t =0.
	\]
\end{lemm}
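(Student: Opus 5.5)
The plan is to reduce everything to a compact window, use Egorov's theorem to get uniform convergence of $b_n$ to $b$ off a set of small Lebesgue measure, and then control the probability that $Y^{n,x}_t$ visits that exceptional set. Only the last step needs the density bound (L4), transferred from $Y^x_t$ to $Y^{n,x}_t$ through (L2).

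\emph{Step 1 (truncation).} Fix $R>0$ and split the integrand according to $\{|Y^{n,x}_t|\le R\}$ and $\{|Y^{n,x}_t|>R\}$. On the second event, (L1) gives $|b_n-b|^p\le (2K)^p(1+|Y^{n,x}_t|)^p$. Combining Hölder and Markov with (L3), taken at an exponent slightly above $p$ (which we may assume, since in the application the moments of all orders are uniformly bounded by \cite[Theorem 3.3.1]{KunitaB19}), this contribution is at most $C R^{-\delta}$ uniformly in $n$ and $t$.

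\emph{Step 2 (Egorov).} On $[-R,R]$ we have $b_n\to b$ a.e. Given $\varepsilon>0$, Egorov's theorem together with inner regularity yields a compact $K_\varepsilon\subset[-R,R]$ with $|[-R,R]\setminus K_\varepsilon|<\varepsilon$ and $\sup_{K_\varepsilon}|b_n-b|\to0$. We then bound
\[
\int_0^T\E\big[|b_n-b|^p(Y^{n,x}_t)\mathbf 1_{\{|Y^{n,x}_t|\le R\}}\big]\diffns t
\le T\sup_{K_\varepsilon}|b_n-b|^p+(2K)^p(1+R)^p\int_0^T\PP\big(Y^{n,x}_t\in O_\varepsilon\big)\diffns t ,
\]
where $O_\varepsilon:=[-R,R]\setminus K_\varepsilon$ is relatively open with $|O_\varepsilon|<\varepsilon$. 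The first term vanishes as $n\to\infty$, so it remains to show
\[
\limsup_n\int_0^T\PP\big(Y^{n,x}_t\in O_\varepsilon\big)\diffns t\to0 \quad\text{as }\varepsilon\to0 .
\]

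\emph{Step 3 (transfer of the density bound; main obstacle).} For the limit process, (L4) gives
\[
\int_0^T\PP\big(Y^x_t\in A\big)\diffns t\le \int_0^T\frac{C}{\sqrt t}\,|A|\diffns t=2C\sqrt T\,|A|
\]
for every Borel set $A$, uniformly in $A$, and the time singularity is integrable. To pass to $Y^{n,x}_t$ I would use (L2) in probability:
\[
\PP\big(Y^{n,x}_t\in O_\varepsilon\big)\le \PP\big(Y^x_t\in O_\varepsilon^\eta\big)+\PP\big(|Y^{n,x}_t-Y^x_t|>\eta\big),
\]
where $O_\varepsilon^\eta$ is the $\eta$-neighbourhood of $O_\varepsilon$. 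Then let $n\to\infty$ by dominated convergence in $t$, and afterwards $\eta\to0$.

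The difficulty is that $|O_\varepsilon^\eta|$ decreases only to $|\overline{O_\varepsilon}|$, which need not be small. I would first try to remove this by choosing the exceptional set more carefully. Inside the Egorov construction, replace $O_\varepsilon$ by a finite union of open intervals $U_\varepsilon$ with $|U_\varepsilon|<2\varepsilon$. This is done by covering the small-measure bad set using outer regularity and keeping a finite subcover of its compact part, after first applying Lusin's theorem to $b$ so that the residual bad set is contained in a closed null-type set handled by (L4). Since $|\partial U_\varepsilon|=0$ for a finite union of intervals, $|U_\varepsilon^\eta|\downarrow|\overline{U_\varepsilon}|=|U_\varepsilon|<2\varepsilon$. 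Combined with the bound on $\int_0^T\PP(Y^x_t\in\cdot)\diffns t$ from (L4), this gives the required smallness.

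If this regularisation of the bad set cannot be carried out in full generality, the fallback is the structure available wherever the lemma is invoked in the paper. There $Y^{n,x}$ solves an SDE with uniformly Lipschitz, uniformly elliptic coefficients and finite-activity jumps, so Theorem~\ref{thm:global_bound_clean} applies to $Y^{n,x}$ directly, with the same constant. The estimate on $\int_0^T\PP(Y^{n,x}_t\in O_\varepsilon)\diffns t$ is then immediate and equals $2C\sqrt T\,\varepsilon$.

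Finally, letting $n\to\infty$, then $\varepsilon\to0$, then $R\to\infty$ yields the claim.
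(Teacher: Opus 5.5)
Your Step~3 has a genuine gap, and it cannot be repaired under (L1)--(L4). The regularisation you propose, an Egorov exceptional set that is a finite union of intervals (or closed) of small measure, is impossible in general. The reason is that (L1) allows a.e.\ convergence that is uniform on no interval. Take $b\equiv0$ and $b_n(y)=\sum_{k\in\mathbb Z}\phi\big(n^3(y-k/n)\big)$ with $\phi\in C_c^\infty((-1,1))$, $0\le\phi\le1$, $\phi(0)=1$. Borel--Cantelli gives $b_n\to0$ a.e., yet $\sup_J|b_n|=1$ on every interval $J$ with $|J|>1/n$. So no set on which $b_n\to b$ uniformly contains an interval, and every exceptional set is dense in $[-R,R]$. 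Lusin does not help either: the obstruction is not a fixed null set but the sets $\{|b_n-b|>\epsilon\}$, which move with $n$.

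In fact the lemma is false as stated. Take $Y^x_t=x+B_t$, so (L4) holds with $C=(2\pi)^{-1/2}$. Take $Y^{n,x}_t:=\lfloor nY^x_t\rfloor/n$, so $|Y^{n,x}_t-Y^x_t|\le1/n$ and (L2), (L3) hold. With the $b_n$ above, $|b_n-b|^p(Y^{n,x}_t)\equiv1$, so the integral equals $T$ for every $n$. Convergence in law to an absolutely continuous limit says nothing about $\PP(Y^{n,x}_t\in A_n)$ for $n$-dependent sets with $|A_n|\to0$.

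You located the obstruction correctly: $|O^\eta_\varepsilon|\downarrow|\overline{O_\varepsilon}|$, which may be large. The paper's proof runs into the same point and passes over it with ``$F$ can be chosen closed since $\lambda$ is regular''. It then applies the closed-set Portmanteau bound $\limsup_n\PP(Y^{n,x}_t\in F)\le\PP(Y^x_t\in F)$. Regularity only lets one make the good set compact, i.e.\ the bad set relatively open, which is exactly where you got stuck.

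The missing ingredient is control of the laws of $Y^{n,x}_t$ themselves, and your fallback supplies it. Under (L4$'$) $\sup_n\rho^{n,x}_t(y)\le C/\sqrt t$, the argument is correct and shorter than your Steps~2--3, with no Egorov and no (L2):
\[
\int_0^T\E\big[|b_n-b|^p(Y^{n,x}_t)\mathbf 1_{\{|Y^{n,x}_t|\le R\}}\big]\diffns t\le 2C\sqrt T\int_{-R}^R|b_n-b|^p(y)\diffns y\longrightarrow0 .
\]
This follows by dominated convergence, since the integrand is bounded by $(K(1+R))^p$ on $[-R,R]$.

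However, this proves a different lemma, which should be stated with its actual hypotheses:
\begin{itemize}
\item (L4$'$) in place of (L4);
\item uniform integrability of $|Y^{n,x}_t|^p$, i.e.\ your moment of order above $p$ in Step~1, which (L3) alone does not give.
\end{itemize}
Invoking it in the paper then requires Theorem~\ref{thm:global_bound_clean} for $Y^{n,x}$ with an $n$-independent constant. That in turn needs uniform-in-$n$ Lipschitz and ellipticity bounds on $\bar b_n,\bar\sigma$, which are available for the mollified $\bar b_n$ of Lemma~\ref{lem:barb_bn_simultaneous}. It also needs finite jump activity, which the standing assumptions do not impose.
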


\begin{proof}
	Define
	\[
	I_n(t):=\E\big[|b_n(Y_t^{n,x})-b(Y_t^{n,x})|^p\big],\qquad t\in[0,T].
	\]
	
	By the uniform linear growth of $b_n$ and $b$, there exists $C_1>0$ such that
	\[
	|b_n(y)-b(u)|^p\le C_p(1+|y|^p)\qquad \text{ for all }u\in\R,\ \text{ for all } n.
	\]
	Using the uniform $p$-moment bound on $Y^{n,x}$, we obtain
	\begin{equation}\label{eq:domination_time_full}
		I_n(t)\le C_1\Big(1+\sup_{n,t}\E|Y_t^{n,x}|^p\Big)=:M_{p,T}<\infty,
		\quad \text{ for all }u\in\R,\ \text{ for all } n.
	\end{equation}
	
	Now, fix $\varepsilon>0$. Choose $\theta\in(0,T]$ such that
	\begin{equation}\label{eq:small_time_full}
		\int_0^\delta M\diffns t \le \frac{\varepsilon}{2}.
	\end{equation}
	Then
	\begin{equation}\label{eq:split_time_int_full}
		\int_0^T I_n(t)\,dt
		=\int_0^\theta I_n(t)\diffns t+\int_\theta^T I_n(t)\diffns t
		\le \frac{\varepsilon}{2}+\int_\theta^T I_n(t)\diffns t.
	\end{equation}
	In addition, for $R>0$
	\begin{align*}	\sup_n \E\big[(1+|Y_t^{n,x}|^p)\mathbf 1_{\{|Y_t^{n,x}|>R\}}\big]\leq &	\sup_n \E\big[(1+|Y_t^{n,x}|^p)^2\big]^{1/2}	\sup_n \E\big[\mathbf 1_{\{|Y_t^{n,x}|>R\}}\big]^{1/2}\\
		\leq & M_{p,T}	\sup_n\frac{(\mathbb{E}[|Y^{n,x}_s|^p])^{1/2}}{R^{p/2}}\leq M_{p,T}\frac{C}{R^{p/2}}
	\end{align*}
	Next, fix $t\in[\theta,T]$ and $\eta>0$.
	Choose $R>0$ such that
	\begin{equation}\label{eq:R_choice_full}
		\sup_n \E\big[(1+|Y_t^{n,x}|^p)\mathbf 1_{\{|Y_t^{n,x}|>R\}}\big]\le \eta.
	\end{equation}
	Using the linear growth bound,
	\[
	|b_n(y)-b(y)|^2\le C_1(1+|y|^2), \text{ for all } y\in \R \text{ and } n
	\]
	we obtain
	\begin{align}
		I_n(t)
		&\le \E\!\left[|b_n-b|^2(Y_t^{n,x})\mathbf 1_{\{|Y_t^{n,x}|\le R\}}\right]
		+ C_1\,\E\!\left[(1+|Y_t^{n,x}|^2)\mathbf 1_{\{|Y_t^{n,x}|>R\}}\right]\nonumber\\
		&\le \E\!\left[|b_n-b|^2(Y_t^{n,x})\mathbf 1_{\{|Y_t^{n,x}|\le R\}}\right]
		+ C_1\eta. \label{eq:split_space_full}
	\end{align}

	Thanks to Egorov's theorem applied on $[-R,R]$, for every $\delta_1>0$, there exists a measurable $F\subset[-R,R]$ with $\lambda(F)< \delta_1$ such that $|\bar b_n'(y)-\bar b'(y)|$ converges uniformly to $0$ on the set $F^C_R=[-R,R]\backslash F$. Note that $F$ can be chosen closed since $\lambda$ is regular. 

	Hence
	\begin{align}
		&	\E\big[|b_n-b|^2(Y_t^{n,x})\mathbf 1_{\{|Y_t^{n,x}|\le R\}}\big]\notag \\
		\leq& \E\big[|b_n-b|^2(Y_t^{n,x})\mathbf 1_{\{Y_t^{n,x}\in F^C_R\}}\big]		+ \E\big[|b_n-b|^2(Y_t^{n,x})\mathbf 1_{\{Y_t^{n,x}\in F\}}\big]\nonumber\\
		\leq & \int_{F^c_R}|b_n(y)-b(y)|^2\rho^n_t(y)\diffns y
		+ C_1\E\big[(1+|Y_t^{n,x}|^2)\mathbf 1_{\{Y_t^{n,x}\in F\}}\big]\notag\\
		\leq &	\sup_{u\in F^c_R}|b_n(u)-b(u)|^2+ C_1\E\big[(1+R^p)\mathbf 1_{\{Y_t^{n,x}\in F\}}\big],
		\label{eq:split_F_full}
	\end{align}
	where in the last inequality we have use the facts that $\rho_s^n(y)$ is a density i.e.\\ $\int_{\mathbb{R}}\rho_s^n(y)\diffns y= 1$
	and $F\subset[-R,R]$. 
	
	Recall that $Y^{n,x}_s$ converges to $Y^{x}_s$ in $L^2(\Omega)$. 
	Therefore $Y^{n,x}_s$ converges to $Y^{x}_s $ in probability and thus in distribution and since $F$ may be chosen closed, Portmanteau-Alexandrov Theorem gives
	\begin{align}\label{eq:F_term_full}
		\E\big[(1+R^p)\mathbf 1_{\{Y_t^{n,x}\in F\}}\big]	=&(1+R^p) \mathbb{P}(Y_t^{n,x}\in F)\leq (1+R^p)\limsup_{n\to\infty}\mathbb{P}(Y_t^{n,x}\in F)\notag\\
		\le&(1+R^p) \mathbb{P}(Y_t^x\in F)
		=(1+R^p) \int_F \rho_t^x(y)\diffns y\notag\\
		\le &\frac{C(1+R^p) }{\sqrt{\theta}}\lambda(F)
		\le \frac{C(1+R^p) }{\sqrt{\theta}}\delta_1.
	\end{align}
	where we have used the density bound \eqref{eq:rho_sup_bound_time_full} in the last equality. Combining \eqref{eq:split_space_full}--\eqref{eq:F_term_full} and letting
	$n\to\infty$ yields
	\[
	\limsup_{n\to\infty} I_n(t)
	\le C_1\eta + C_1(1+R^2)\frac{C}{\sqrt{\theta}}\delta_1.
	\]
	First choose $\delta_1>0$ such that
	$C_1(1+R^2)\frac{C}{\sqrt{\theta}}\delta_1\le \eta$, and then let $\eta\downarrow0$.
	This proves
	\begin{equation}\label{eqconWint}
		I_n(t)\xrightarrow[n\to\infty]{}0
		\qquad \text{for every } t\in[\theta,T].
	\end{equation}
	
	By \eqref{eq:domination_time_full}, $0\le I_n(t)\le M$ for all $t\in[\theta,T]$.
	Since $I_n(t)\to0$ pointwise on $[\theta,T]$, the dominated convergence theorem yields
	\[
	\int_\theta^T I_n(t)\diffns t \xrightarrow[n\to\infty]{}0.
	\]
	Combining with \eqref{eq:split_time_int_full} proves the claim.
\end{proof}



\end{document}